\newtheorem{theorem}{Theorem}[section]
\newtheorem{corollary}[theorem]{Corollary}
\newtheorem{definition}[theorem]{Definition}
\newtheorem{conjecture}[theorem]{Conjecture}
\newtheorem{problem}[theorem]{Problem}
\newtheorem{lemma}[theorem]{Lemma}
\newtheorem{claim}{Claim}% [subsection]
\newtheorem{proposition}[theorem]{Proposition}
\newenvironment{proof}{\noindent {\bf Proof.}}{\rule{3mm}{3mm}\par\medskip}
\def\qed{\mbox{\rule[0.3mm]{2ex}{2ex}}} % 定义实心符
\newcommand{\F}{\mbox{$\cal F$}}
\newcommand{\T}{\mbox{$\cal T$}}
\newcommand{\TP}{\mbox{$\mathcal P$}}
\newcommand{\Z}{\mbox{$\mathbb Z$}}
\newcommand{\setCZ}{\mbox{${\mathcal S}_3$}}
\newcommand{\setZTHR}{\mbox{$\langle\Z_3\rangle$}}
\title{Spanning Triangle-trees and Flows of Graphs\footnote{Supported by NSFC No.11871034, 11531011 and NSFQH No.2017-ZJ-790.}}
\date{}
\author{
\small  Jiaao Li$^1$, Xueliang Li$^2$, Meiling Wang$^2$\\
\small $^1$School of Mathematical Sciences and LPMC \\
\small Nankai University, Tianjin 300071, China \\
\small $^2$Center for Combinatorics and LPMC \\
\small Nankai University, Tianjin 300071, China \\
\small Emails: lijiaao@nankai.edu.cn; lxl@nankai.edu.cn; Estellewml@gmail.com
}
\begin{document}

\maketitle
\begin{abstract}
  In this paper we study the flow-property of graphs containing a spanning triangle-tree. Our main results provide a structure characterization of graphs with a spanning triangle-tree admitting a nowhere-zero $3$-flow. All these graphs without nowhere-zero $3$-flows are constructed from $K_4$ by a so-called bull-growing operation. This generalizes a result of Fan et al. in 2008 on triangularly-connected graphs and particularly shows that every $4$-edge-connected graph with a spanning triangle-tree has a nowhere-zero $3$-flow. A well-known classical theorem of Jaeger in 1979 shows that  every graph with two edge-disjoint spanning trees admits a nowhere-zero $4$-flow. We prove that every graph with two edge-disjoint spanning triangle-trees has a flow strictly less than $3$.
\\[2mm]
\textbf{Keywords:}
 nowhere-zero flow, $3$-flow; flow index; triangularly-connected; triangle-tree;  $2$-tree.
\\[2mm]\textbf{AMS Subject Classifications 2010:}  05C21, 05C40, 05C05
\end{abstract}

\section{Introduction}

We shall introduce some necessary notation and terminology and the concepts of $3$-flows, circular flows and group connectivity in the next subsections.

\subsection{The $3$-flows}

Graphs considered here may contain parallel edges, but no loops. We follow the textbook \cite{bondy} for undefined terminology and notation. For a graph $G$, we use $V(G)$ and $E(G)$ to denote the vertex set and edge set of $G$, respectively.
When $S$ is an edge subset of $E(G)$ or a vertex subset of $V(G)$, we use $G[S]$ to denote the edge-induced subgraph or the vertex-induced subgraph from $S$. For a vertex $u\in V(G)$, $d_G(u)$ denotes the degree of $u$ in $G$. Sometimes the subscript is omitted for convenience. We call $u$ a {\em $k$-vertex} ($k^+$-vertex, resp.) if $d(u)=k$ ($d(u)\ge k$, resp.). A $k$-cut is an edge-cut of size $k$. Let $D$ be an orientation of $G$.  The set of outgoing-arcs incident to $u$ is denoted by $E_D^+(u)$, while the set of incoming-arcs is denoted by $E_D^-(u)$. We use $d^+_D(v)=|E_D^+(u)|$, $d^-_D(v)=|E_D^-(u)|$ to denote the out-degree and in-degree of $u$, respectively.

Given an orientation $D$ and  a function $f$ from $E(G)$ to $\{\pm1,\pm2,\cdots,\pm (k-1)\}$, if $\sum_{e\in E_D^+(v)}f(e)=\sum_{e\in E_D^-(v)}f(e)$ for each vertex $v\in V(G)$, then we call $(D,f)$ a {\it nowhere-zero $k$-flow}, abbreviated as $k$-NZF. The flow theory was initiated by Tutte \cite{Tutte3flow_conj1&modk_flow}, generalizing face-colorings of plane graphs to flows of arbitrary non-planar graphs by  duality. Tutte proposed a well-known $3$-flow conjecture, which was selected by Bondy among the {\it Beautiful Conjectures in Graph Theory} \cite{bondybeauconj} with high evaluation.

\begin{conjecture}(Tutte's $3$-flow conjecture)\label{Tutteconj}
Every $4$-edge-connected graph has a $3$-NZF.
\end{conjecture}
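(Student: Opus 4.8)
The plan is to prove Conjecture~\ref{Tutteconj} by converting it into a purely combinatorial orientation problem and then attacking a minimal counterexample. First I would invoke Tutte's equivalence \cite{Tutte3flow_conj1&modk_flow}: a graph $G$ has a $3$-NZF if and only if it admits a \emph{mod-$3$ orientation}, i.e.\ an orientation $D$ with $d_D^+(v)\equiv d_D^-(v)\pmod{3}$ for every $v\in V(G)$. This is the natural formulation to work with, because the defining condition is local and behaves predictably under edge contraction, the operation on which any inductive argument must rely. Accordingly, the statement I would actually try to establish is the existence of such an orientation for every $4$-edge-connected graph.

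The second step is to shrink the class of graphs that must be handled. Kochol's reduction shows that Conjecture~\ref{Tutteconj} is equivalent to its restriction to $5$-edge-connected graphs: one replaces small edge-cuts by gadgets that preserve both the edge-connectivity bookkeeping and the existence of a mod-$3$ orientation. I would therefore fix a counterexample $G$ that is $5$-edge-connected, has no $3$-NZF, and minimizes $|V(G)|+|E(G)|$, and then extract structure. Standard reductions rule out the easy configurations: parallel edges and vertices of small degree can be removed by Mader-type splitting-off that keeps $G$ $5$-edge-connected while respecting the modular-degree constraints, and any $5$-edge-cut whose two sides each become mod-$3$-orientable after contraction can be spliced together to orient $G$, contradicting minimality. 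Thus $G$ has large minimum degree and no such reducible cut.

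The third step is to run the contraction-based induction of Thomassen and of Lov\'asz--Thomassen--Wu--Zhang, who settled the $8$- and then the $6$-edge-connected cases by carrying a strengthened invariant: a mod-$3$ orientation with a prescribed boundary demand at the vertices (a degree-constrained, $(\Z_3,1)$-type condition) that is stable under contracting a connected piece and recursing. Concretely, one repeatedly contracts a carefully chosen subgraph, solves the smaller instance with the stronger invariant, and then distributes the orientation back across the contracted part using the slack guaranteed by high degree. The whole challenge is to make this recursion run when the edge-connectivity is only $5$ (and, via Kochol, effectively $4$) rather than $6$; here I would search for a sharper reducible configuration around the low-degree vertices of $G$, or a refined demand function that tolerates a bounded number of degree-$5$ vertices per contracted block.

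The hard part -- and the reason this remains a famous open conjecture rather than a theorem -- is the mismatch between what induction wants and what is true. The cleanest inductive target, $\Z_3$-connectivity (group connectivity), is closed under contraction and would close the argument at once; this closure is exactly what drives the $6$- and $8$-edge-connected proofs. But $\Z_3$-connectivity is strictly stronger than admitting a $3$-NZF, and it is known to fail at edge-connectivity $4$ -- which is precisely why the group-connectivity conjecture of Jaeger, Linial, Payan and Tarsi is stated only for connectivity $5$ -- while Kochol's reduction effectively pushes one down to connectivity $4$. Consequently no argument can silently carry $\Z_3$-connectivity through the contractions at the conjectured threshold, and one is forced back to the bare $3$-flow property, losing the closure that powers the induction. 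I expect the decisive obstacle to be the design of an inductive invariant strictly between ``admits a $3$-NZF'' and ``is $\Z_3$-connected'' that is both preserved under the needed contractions and provable down to edge-connectivity $5$ (equivalently $4$); absent such an invariant, the program above recovers only the current $6$-edge-connected frontier.
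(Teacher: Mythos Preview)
The statement you were asked to prove is Tutte's $3$-flow \emph{conjecture}; the paper states it as Conjecture~\ref{Tutteconj} and does not attempt a proof. It is an open problem, and the paper's contribution is to verify it only for the special class of graphs containing a spanning triangle-tree (Corollary~\ref{COR:3_3-vertices} and the surrounding results), not in general.

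Your proposal is not a proof either, and you are candid about this: you sketch the standard reformulation via mod-$3$ orientations, Kochol's reduction to $5$-edge-connected graphs, and the Lov\'asz--Thomassen--Wu--Zhang contraction machinery, and then correctly identify the gap --- the inductive invariant (essentially $\Z_3$-connectivity) that drives the $6$-edge-connected argument is known to fail at edge-connectivity $4$, so the recursion cannot be pushed to the conjectured threshold. That diagnosis is accurate and well explained, but it means what you have written is a survey of the obstruction, not a proof attempt with a concrete new idea. Since the paper contains no proof of Conjecture~\ref{Tutteconj} to compare against, there is nothing further to check: both you and the paper leave the conjecture open.
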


Jaeger's $4$-flow theorem\cite{Jaeger_4_flow_thm} in 1979 shows that every $4$-edge-connected graph admits a nowhere-zero $4$-flow. This theorem was proved from spanning trees to finding even subgraph covers, and a stronger version concerning spanning trees is as follows.

\begin{theorem}\cite{Jaeger_4_flow_thm}\label{4flow}
Every graph with two edge-disjoint spanning trees has a $4$-NZF.
\end{theorem}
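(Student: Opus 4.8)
The plan is to reduce the statement to a purely combinatorial covering problem and then solve that directly from the two spanning trees. I will use the classical reformulations (all due to Tutte): $G$ has a nowhere-zero $4$-flow if and only if it has a nowhere-zero $\mathbb{Z}_2\times\mathbb{Z}_2$-flow, and $G$ has a nowhere-zero $\mathbb{Z}_2\times\mathbb{Z}_2$-flow if and only if $E(G)$ can be covered by two \emph{even subgraphs} (spanning subgraphs in which every vertex has even degree, equivalently elements of the cycle space over $\mathbb{Z}_2$). Indeed, a $\mathbb{Z}_2\times\mathbb{Z}_2$-flow is a pair $(\phi_1,\phi_2)$ of $\mathbb{Z}_2$-flows, the support of each $\phi_i$ is an even subgraph, and being nowhere-zero means exactly that the two supports cover $E(G)$; note that the two even subgraphs are allowed to overlap. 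So it suffices to cover $E(G)$ by two even subgraphs.

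The only fact I need about a single spanning tree is the following. If $T$ is a spanning tree of $G$ and $F\subseteq E(G)\setminus E(T)$, then $Z_F:=\triangle_{e\in F}C^T_e$, the symmetric difference of the fundamental cycles $C^T_e$ of the edges $e\in F$, is an even subgraph whose set of non-tree edges is exactly $F$; in particular $F\subseteq Z_F$. This is immediate: the cycle space is closed under symmetric difference, and each $C^T_e$ meets $E(G)\setminus E(T)$ only in $e$, so no non-tree edge cancels.

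For the main step, let $T_1$ and $T_2$ be the two edge-disjoint spanning trees and put $R:=E(G)\setminus(E(T_1)\cup E(T_2))$. Edge-disjointness gives $E(T_2)\subseteq E(G)\setminus E(T_1)$ and $E(T_1)\cup R=E(G)\setminus E(T_2)$. Hence, forming $C_1:=Z_{E(T_2)}$ with respect to $T_1$ and $C_2:=Z_{E(T_1)\cup R}$ with respect to $T_2$, we obtain two even subgraphs with $C_1\supseteq E(T_2)$ and $C_2\supseteq E(T_1)\cup R$, so that $C_1\cup C_2\supseteq E(T_1)\cup E(T_2)\cup R=E(G)$. This is the required covering, and therefore $G$ has a nowhere-zero $4$-flow.

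There is no deep obstacle here — this is Jaeger's classical argument — but two points deserve care: the reductions in the first paragraph rely on Tutte's theorem that flow existence depends only on the group order, together with the even-subgraph description of $\mathbb{Z}_2\times\mathbb{Z}_2$-flows; and one must observe that it is precisely the edge-disjointness of $T_1$ and $T_2$ that makes $C_1$ and $C_2$ well-defined as above. For a fully self-contained write-up the remaining routine work is to record why a covering of $E(G)$ by two even subgraphs yields an integer flow with values in $\{\pm1,\pm2,\pm3\}$.
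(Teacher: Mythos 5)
Your argument is correct and is exactly the approach the paper attributes to Jaeger (the paper cites this theorem without reproducing a proof, noting only that it ``was proved from spanning trees to finding even subgraph covers''): you pass to $\mathbb{Z}_2\times\mathbb{Z}_2$-flows, identify them with coverings of $E(G)$ by two even subgraphs, and build the two even subgraphs from the fundamental cycles of each spanning tree with respect to the other tree's edges and the leftover edges. The construction of $C_1$ and $C_2$ and the use of edge-disjointness are all sound, so nothing is missing beyond the routine conversion back to an integer $4$-flow that you already flag.
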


For graphs with higher edge-connectivity, breakthrough results for Conjecture \ref{Tutteconj} were obtained by Thomassen \cite{Thomassen conjecture_JCTB} and Lov\'{a}sz, Thomassen, Wu and Zhang  \cite{Lovasz}, which eventually confirmed Conjecture \ref{Tutteconj} for $6$-edge-connected graphs.

\begin{theorem}\cite{Lovasz}\label{THM:LTWZ2013}
Every $6$-edge-connected graph admits a $3$-NZF.
\end{theorem}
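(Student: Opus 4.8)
The plan is to prove the stronger statement actually established in \cite{Lovasz}: every $6$-edge-connected graph is $\mathbb{Z}_3$-\emph{connected}, meaning that for every $\beta\colon V(G)\to\mathbb{Z}_3$ with $\sum_{v\in V(G)}\beta(v)\equiv 0\pmod 3$ there is an orientation $D$ with $d_D^+(v)-d_D^-(v)\equiv\beta(v)\pmod 3$ for every vertex $v$. Taking $\beta\equiv 0$ gives a modulo-$3$ orientation, i.e.\ an orientation in which $d^+(v)\equiv d^-(v)\pmod 3$ everywhere; putting the constant value $1$ on all arcs of such an orientation yields a nowhere-zero $\mathbb{Z}_3$-flow, and hence, by Tutte's equivalence of group flows and integer flows, a $3$-NZF. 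Strengthening to group connectivity is not optional: the plain existence of a $3$-flow is not preserved by the vertex-identifications used below, whereas $\mathbb{Z}_3$-connectivity is, so only the stronger property admits an inductive proof.

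Next I would take a counterexample $(G,\beta)$ minimizing $|V(G)|+|E(G)|$ and carry out the standard reductions. If $G$ has a $\mathbb{Z}_3$-connected subgraph $H$ on at least two vertices — e.g.\ a copy of $K_4$, which is $\mathbb{Z}_3$-connected — then $G/H$ is still $6$-edge-connected, since contraction never lowers edge-connectivity, and has strictly fewer vertices; by minimality the boundary induced on $G/H$ is realizable, and realizing it and then extending through $H$ contradicts minimality. Similarly, at a vertex $z$ of large (hence even) degree one can use a splitting-off theorem (Mader, Lov\'asz) to replace a suitable pair $zx,zy$ by a single edge $xy$ while keeping all edge-connectivities among the other vertices, adjust $\beta$ accordingly, and again invoke minimality. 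These reductions force the counterexample to be ``reduced'': no nontrivial $\mathbb{Z}_3$-connected subgraph and bounded degree (essentially minimum degree $6$). An important point is that the statement one actually inducts on is not literally ``$6$-edge-connected $\Rightarrow$ $\mathbb{Z}_3$-connected'' but an asymmetric version carrying one distinguished high-degree vertex $z$ with $G-z$ suitably connected; this asymmetry is exactly what lets the induction proceed by shrinking or deleting $z$.

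The heart of the argument is then a weighting/discharging analysis of a reduced configuration, exploiting the density forced by $6$-edge-connectivity: by the Nash--Williams/Tutte tree-packing bound a $6$-edge-connected graph has three edge-disjoint spanning trees, so $|E(G)|\ge 3(|V(G)|-1)$, a genuine surplus over the minimum-degree count. One shows that every reduced graph must contain one of a short list of unavoidable local configurations — a low-degree vertex with its neighbourhood, or a small triangle/path structure — on which the prescribed boundary can be realized directly and then extended to the remainder, the desired contradiction. The hard part, and the whole reason the constant is $6$ rather than Thomassen's earlier $8$, is the bookkeeping: the contractions and splittings must be interleaved so that no intermediate graph ever drops below the connectivity threshold needed to keep the (asymmetric) induction running, while the list of unavoidable configurations must be rich enough to cover all reduced graphs of minimum degree $6$. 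Balancing these two opposing demands is the delicate point; the group-connectivity strengthening and the contraction/splitting reductions are by comparison routine. In the sequel only the statement of Theorem~\ref{THM:LTWZ2013} will be used.
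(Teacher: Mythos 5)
This statement is not proved in the paper at all: Theorem~\ref{THM:LTWZ2013} is quoted with a citation to Lov\'asz, Thomassen, Wu and Zhang \cite{Lovasz}, and the present authors only ever use the statement as a black box. So the question is whether your text would stand as a self-contained proof, and it would not. Your first two paragraphs are a fair high-level itinerary of the known argument (strengthen to $\mathbb{Z}_3$-connectivity, take a minimal counterexample, contract nontrivial $\mathbb{Z}_3$-connected subgraphs, split off edges at a distinguished vertex, and induct on an asymmetric statement in which one vertex $z$ carries a relaxed degree condition), but every substantive step is deferred: the asymmetric inductive statement is never actually formulated --- and pinning down exactly the right degree and cut hypotheses for it is where the theorem lives --- and the interleaving of contractions and liftings that you yourself flag as ``the delicate point'' is not carried out. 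A sketch that names its own missing core is not a proof of a result of this depth.

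Moreover, your third paragraph misdescribes how the argument of \cite{Lovasz} actually closes. There is no discharging step and no list of unavoidable local configurations in that proof; the induction on the quantified asymmetric statement has essentially two moves (contract one side of a tight edge-cut and recurse on both pieces, or lift a suitable pair of edges at the distinguished vertex and delete it), and the contradiction comes from the cut/degree bookkeeping, not from finding reducible configurations. The tree-packing remark is also backwards: $3(|V(G)|-1)$ edge-disjoint-spanning-tree density is \emph{weaker} than the bound $|E(G)|\ge 3|V(G)|$ already forced by minimum degree $6$, so it provides no surplus. Likewise ``large (hence even) degree'' is a non sequitur --- $6$-edge-connected graphs have odd-degree vertices, and the splitting-off theorems invoked do not require even degree. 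Since the paper treats this result purely as an imported tool, the correct move here is simply to cite \cite{Lovasz}, as the authors do; if you want to reprove it you must state and prove the full technical induction, not gesture at it.
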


On the other hand, Kochol \cite{Koch01} proved that it suffices to prove Conjecture \ref{Tutteconj} for $5$-edge-connected graphs and he also showed that Conjecture \ref{Tutteconj} is equivalent to the statement that every bridgeless graph with at most three $3$-cuts admits a $3$-NZF. There are infinite many graphs with exactly four $3$-cuts but admitting no $3$-NZF. Several such graph families were given in \cite{triconZ_3,groupconnLai_complefamil,Lai-Locally}. Most of these graphs consist of $2$-sums of $K_4$ (defined later), and majority of whose edges lie in triangles. This may suggest that the potential minimal counterexamples of Conjecture \ref{Tutteconj} (or its equivalent form) may contain many triangles. For more examples, see \cite{Dvorak} which characterizes all planar non vertex-$3$-colorable graphs with four triangles, whose duals also contain similar structures.

A graph is {\em triangular} if each edge is contained in a triangle $K_3$. Xu and Zhang \cite{XZ_CONJECTURE} suggested to consider Conjecture \ref{Tutteconj} for triangular graphs and they verified Conjecture \ref{Tutteconj} for squares of graphs, a subclass of triangular graphs.   Other examples of triangular graphs are the triangulations on surfaces, chordal graphs and locally connected graphs, whose flow-property was studied in \cite{surface,groupconnLai_complefamil,Lai-Locally}, among others.

\begin{definition}
A {\bf triangle-tree} ${\mathcal T}(x_1,x_2,\ldots,x_n)$ is formed by starting with a triangle $x_1x_2x_3$ and then repeatedly adding vertices in such a way that each added vertex $x_{j+1}$ is connected to exactly two adjacent vertices $y,z$ in ${\mathcal T}(x_1,x_2,\ldots,x_j)$ such that, together, the vertices $x_{j+1},y,z$ form a triangle. A $2$-vertex in the triangle-tree is called a {\bf leaf}.
For $n\ge 4$, a {\bf triangle-path} ${\mathcal P}(x_1,x_2,\ldots,x_n)$ is a triangle-tree with precisely two leaves. In the trivial case $n=3$, ${\mathcal P}(x_1,x_2,x_3)$ is a triangle, also considered as a trivial triangle-path.

A graph $G$ is {\bf triangularly-connected} if for any pair of edges $e_1,e_2\in E(G)$, there is a triangle-path containing $e_1$ and $e_2$.
\end{definition}

The above-mentioned graph classes presented in \cite{surface,groupconnLai_complefamil,Lai-Locally,XZ_CONJECTURE} are all triangularly-connected. Fan et al. \cite{triconZ_3} obtained a complete characterization of triangularly-connected graphs with $3$-NZF using $2$-sum operation. Let $A$, $B$ be two subgraphs of $G$. We call $G$ the {\bf $2$-sum} of $A$ and $B$, denoted by $G=A\bigoplus_2 B$, if $E(G)=E(A)\bigcup E(B)$, $|E(A)\bigcap E(B)|=1$ and $|V(A)\bigcap V(B)|=2$. The wheel graph $W_k$ is constructed by adding a new center vertex connecting to each vertex of a $k$-cycle, where $k\ge 3$. A wheel $W_k$ is odd (even, resp.) if $k$ is an odd (even, resp.) number. Note that $K_4$ is also viewed as the odd wheel $W_3$.

\begin{theorem}{\em (Fan, Lai, Xu, Zhang, Zhou \cite{triconZ_3})} \label{triconn3flow}
Let $G$ be a triangularly-connected graph. Then $G$ has no
$3$-NZF if and only if there is an odd wheel $W$ and a subgraph $G_1$ such that $G = W \bigoplus_2 G_1$, where $G_1$ is a triangularly-connected graph without $3$-NZF.
\end{theorem}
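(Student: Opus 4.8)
Throughout I work with nowhere-zero $\mathbb{Z}_3$-flows (equivalent to $3$-NZFs, by Tutte) and rely on two facts about an odd wheel $W=W_{2k+1}$: \emph{(W1)} $W$ has no nowhere-zero $\mathbb{Z}_3$-flow; \emph{(W2)} $W-e$ has one for every $e\in E(W)$. For (W1): at a $3$-valent vertex the three incident signed flow-values of a $\mathbb{Z}_3$-NZF lie in $\{1,2\}$ and sum to $0$ in $\mathbb{Z}_3$, hence coincide; assigning to each rim vertex $v$ this common value $c(v)\in\{1,2\}$ then forces $c(u)=-c(v)$ along every rim edge, which is impossible on the odd rim cycle. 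For (W2): deleting any edge of $W$ and suppressing the resulting $2$-valent vertex (a step that preserves existence of a $\mathbb{Z}_3$-NZF) leaves a fan or an even wheel, each of which admits a $\mathbb{Z}_3$-NZF via the alternating assignment $1,2,1,2,\dots$ along the rim (which also reproves that even wheels have a $3$-NZF).

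The second ingredient is the $2$-sum restriction lemma: if $G=A\bigoplus_2 B$ with junction edge $e$, then $G$ has a $\mathbb{Z}_3$-NZF if and only if there are $\mathbb{Z}_3$-flows $\phi_A$ on $A$ and $\phi_B$ on $B$, each nonzero off $e$, with $\phi_A(e)+\phi_B(e)\neq 0$ for a fixed orientation of $e$ (restrict a flow of $G$ one way, glue the other). Combined with (W1)--(W2) this gives: for an odd wheel $W$, the graph $W\bigoplus_2 G_1$ has a $\mathbb{Z}_3$-NZF if and only if $G_1$ does --- any $\mathbb{Z}_3$-flow of $W$ nonzero off $e$ must carry value $0$ on $e$ (else it is nowhere-zero on $W$, contradicting (W1)), so a $\mathbb{Z}_3$-NZF of $W\bigoplus_2 G_1$ restricts to an everywhere-nonzero flow of $G_1$, while conversely a $\mathbb{Z}_3$-NZF of $G_1$ extends by the flow of $W$ supplied by (W2). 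The ``$\Leftarrow$'' direction is now immediate: if $G_1$ is triangularly connected without a $3$-NZF, then $W\bigoplus_2 G_1$ has no $3$-NZF.

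For ``$\Rightarrow$'' I take a counterexample $G$ minimizing $|V(G)|+|E(G)|$, so $G$ is triangularly connected, has no $\mathbb{Z}_3$-NZF, yet is neither an odd wheel (else $G=W\bigoplus_2 K_2$ is of the required form, the single edge being the trivial factor at which the recursion stops) nor of the form $W\bigoplus_2 G_1$ with $W$ an odd wheel and $G_1$ triangularly connected without a $3$-NZF. Triangularly connected graphs are $2$-connected, so $\delta(G)\ge 2$. Suppose $G$ has a $2$-valent vertex $v$ with neighbours $a,b$: triangular connectedness forces $ab\in E(G)$, and also forces $ab$ to lie in a second triangle (otherwise $vab$ is isolated in the triangle-adjacency structure, so no triangle-path joins $va$ to an edge outside $vab$). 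Hence $G=K_3\bigoplus_2(G-v)$; the $K_3$-case of the restriction lemma shows neither $G-v$ nor $G-v-ab$ has a $\mathbb{Z}_3$-NZF, and $G-v$ is again triangularly connected, so minimality applies. Since $G-v$ has no $\mathbb{Z}_3$-NZF it is either an odd wheel --- impossible, as $G=K_3\bigoplus_2(G-v)$ would then have a $\mathbb{Z}_3$-NZF by (W2) --- or $G-v=W\bigoplus_2 G_1'$ with $W$ an odd wheel and $G_1'$ triangularly connected without a $3$-NZF; reattaching $vab$ (and re-choosing the decomposition when $ab$ lies inside $W$, where (W2) again provides what is needed) exhibits $G$ in the required form, a contradiction. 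Thus $\delta(G)\ge 3$.

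The central remaining task is to show that a triangularly connected graph with $\delta\ge 3$, no $\mathbb{Z}_3$-NZF, and not an odd wheel must split as $W\bigoplus_2 G_1$ with $W$ an odd wheel. I would argue through the triangle-tree/triangle-path structure together with contractions: if $H\subseteq G$ is a triangularly connected subgraph that is $\mathbb{Z}_3$-connected, then $G$ has a $\mathbb{Z}_3$-NZF whenever $G/H$ does, and a supply of such subgraphs is available among even wheels, fans, and graphs assembled from them by triangle-path extensions. Unless the triangles meeting a minimum-degree vertex already expose an odd-wheel $2$-summand, one such triangle-structure-preserving contraction yields a strictly smaller triangularly connected quotient with no $\mathbb{Z}_3$-NZF, to which minimality applies, and the resulting odd wheel lifts back to $G$. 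This last step is the main obstacle: it needs a workable catalogue of $\mathbb{Z}_3$-connected triangularly connected graphs, a careful local case analysis at a minimum-degree vertex to locate either the odd-wheel summand or a safely contractible piece, and enough bookkeeping to guarantee that after the $2$-sum and contraction operations the complementary factor $G_1$ is still triangularly connected. Everything else reduces to (W1), (W2) and the $2$-sum restriction lemma.
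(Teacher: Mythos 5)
This statement is quoted from Fan, Lai, Xu, Zhang and Zhou \cite{triconZ_3}; the present paper does not reprove it, so there is no internal proof to compare against line by line. Judged on its own terms, your proposal establishes only the easy half. The sufficiency direction (facts (W1), (W2) about odd wheels plus the $2$-sum restriction lemma, giving that $W\bigoplus_2 G_1$ has a $\mathbb{Z}_3$-NZF iff $G_1$ does) is correct and standard. But the necessity direction --- that every triangularly-connected graph with $\delta\ge 3$ and no $3$-NZF which is not itself an odd wheel must split off an odd wheel as a $2$-summand with the complementary factor again triangularly connected and flow-free --- is the entire substance of the theorem, and your text explicitly leaves it as a wish-list (``a workable catalogue of $\mathbb{Z}_3$-connected triangularly connected graphs, a careful local case analysis \dots, enough bookkeeping''). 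Nothing in the proposal actually locates the odd wheel or verifies that the residual factor inherits triangular connectedness, so this is a genuine gap, not a routine omission. A secondary gap: in your $\delta=2$ reduction, the case where the edge $ab$ of the deleted triangle $vab$ lies on the rim of the odd wheel $W$ in the decomposition of $G-v$ is not resolved; there $G$ need not be expressible as $W\bigoplus_2(\text{anything})$ for that same $W$, and ``re-choosing the decomposition'' is asserted rather than argued.

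It is also worth noting that the known proofs (Fan et al., and the analogous argument this paper gives for its Theorem \ref{tri-tree_character}) do not induct on the $3$-NZF property directly: they prove a stronger $\mathbb{Z}_3$-connectivity statement (cf.\ Theorem \ref{Thm:TriConZ_3_2-SUM}) and derive the flow characterization from it, using contraction of $\mathbb{Z}_3$-connected subgraphs (Lemma \ref{completefamily}), lifting (Lemma \ref{splitZ_3}), and an extremal choice of a longest triangle-path together with a maximal $\mathbb{Z}_3$-connected subgraph. Your closing paragraph correctly guesses that this machinery is what is needed, but guessing the toolbox is not the same as carrying out the case analysis; as written, the hard direction remains unproved.
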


In this paper, we push further to study the $3$-flows of even wider graph class, i.e. graphs containing a spanning triangle-tree. Triangularly-connected graphs most likely contain a spanning triangle-tree, but not vice versa, as some edge(s) may not be contained even in any triangle, see Figs. \ref{longerpath} and \ref{Fig:3coloring} for instances. More detailed comparison of these two graph classes is discussed in the last section.

As we need to handle certain $3$-connected graphs, the $2$-sum operation is not enough to achieve this work. We develop a new tool, called the {\it bull-growing/bull-reduction}.
Let $u,v$ be two adjacent $3$-vertices of a graph $G$ with a common neighbor $w$. The third neighbor of $u$ and $v$ is denoted by $a$ and $b$, respectively.  Let $H=G-u-v+ab$ (and we delete possible loops when $a=b$). Then $H$ is called the {\it bull-reduction} of $G$, and $G$ is a {\it bull-growing} of $H$ (see Fig. \ref{Fig:bullreduce}), and we write $G= {\mathcal B}\biguplus H$.

\begin{figure}[!htbp]
\centering
\minipage{0.45\textwidth}
  \includegraphics[width=1\textwidth]{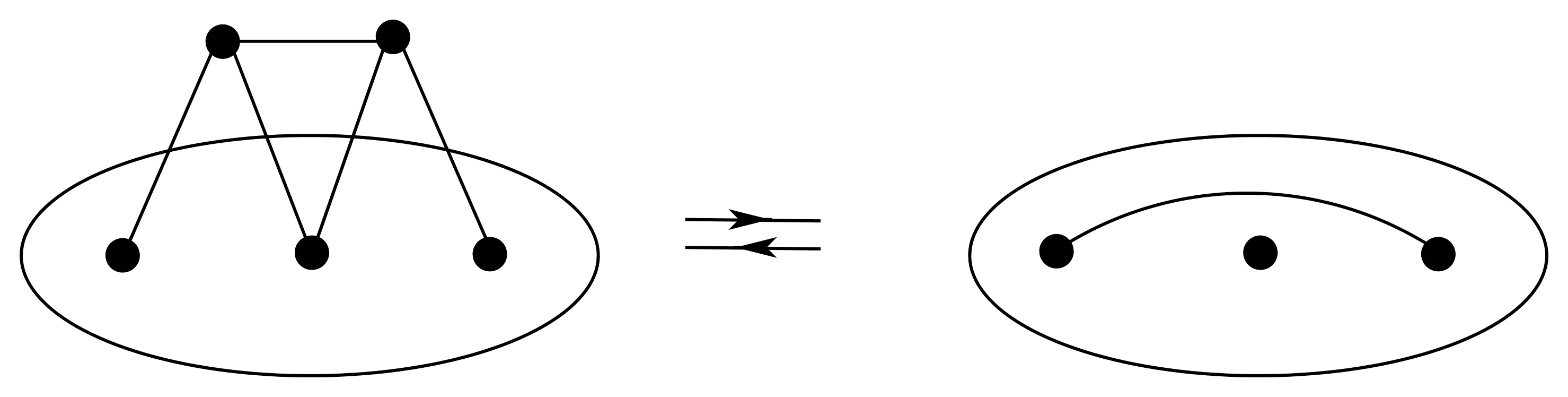}
  \put(-12,3.5){$u$}
  \put(-10.1,3.5){$v$}
  \put(-12.8,0.6){$a$}
  \put(-9.3,0.6){$b$}
  \put(-11.1,0.6){$w$}
  \put(-4.7,0.6){$a$}
  \put(-1,0.6){$b$}
  \put(-3.1,0.6){$w$}
  \put(-8.5,1.9){\scriptsize Bull-reduction}
  \put(-8.5,0.8){\scriptsize Bull-growing}
  \put(-11,-0.6){\small$G$}
  \put(-3,-0.6){\small$H$}
  \put(-8.5,-1.5){{when $a\neq b$}}
\endminipage\hfill
\minipage{0.45\textwidth}
  \includegraphics[width=\textwidth]{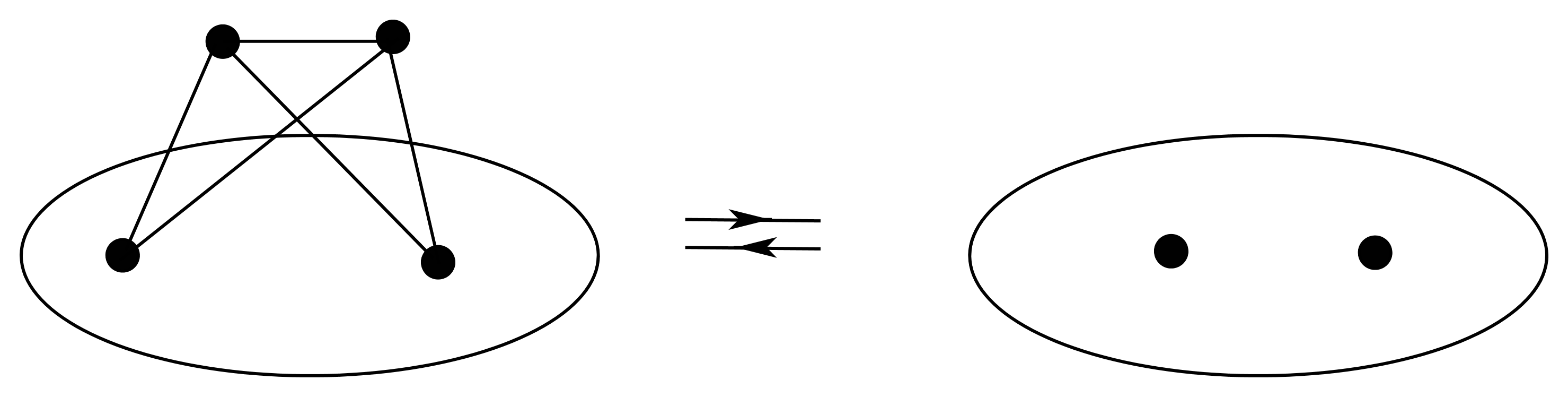}
  \put(-12,3.5){$u$}
  \put(-10.1,3.5){$v$}
  \put(-10.3,0.6){$a$}
  \put(-9.9,0.6){$(b)$}
  \put(-12.8,0.6){$w$}
  \put(-2.1,0.6){$a$}
  \put(-1.6,0.6){$(b)$}
  \put(-3.9,0.6){$w$}
  \put(-8.5,1.9){\scriptsize Bull-reduction}
  \put(-8.5,0.8){\scriptsize Bull-growing}
  \put(-11.5,-0.6){\small$G$}
  \put(-3,-0.6){\small$H$}
  \put(-8.5,-1.5){{ when $a=b$}}
\endminipage
\caption{Bull-reduction and bull-growing.}
\label{Fig:bullreduce}
\end{figure}

\begin{theorem}\label{mainthm3flow}
Let $G$ be a graph containing a spanning triangle-tree. Then $G$ has no
$3$-NZF if and only if $G = {\mathcal B} \biguplus G_1$, where $G_1$  contains a spanning triangle-tree and has no $3$-NZF. In other words, $G$ has no $3$-NZF if and only if $G$ is formed from $K_4$ by a series of bull-growing operations.
\end{theorem}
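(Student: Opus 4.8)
The plan is to work throughout with nowhere-zero $\mathbb{Z}_3$-flows --- a graph admits a $3$-NZF iff it admits a nowhere-zero $\mathbb{Z}_3$-flow --- and to exploit that a bull-operation changes the graph only near the bull. The first, routine, step is that a bull-operation preserves the $\mathbb{Z}_3$-flow status. Suppose $G=\mathcal B\biguplus G_1$ with bull vertices $u,v$ of degree $3$, common neighbour $w$, and third neighbours $a,b$. In any nowhere-zero $\mathbb{Z}_3$-flow $f$ of $G$, conservation at $u$ (orienting $ua,uv,uw$ away from $u$) forces $f(ua)=f(uv)=f(uw)$, since three nonzero elements of $\mathbb{Z}_3$ summing to $0$ must be equal; conservation at $v$ then forces $f(vb)=f(vw)=-f(uv)$. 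Hence $f(uw)+f(vw)=0$ and $f(ua)=-f(vb)$, so deleting $u,v$ and assigning $f(ua)$ (suitably oriented) to the new edge $ab$ yields a nowhere-zero $\mathbb{Z}_3$-flow of $G_1$. Conversely a nowhere-zero $\mathbb{Z}_3$-flow of $G_1$ extends to $G$: delete $ab$, reroute its value along the path $a,u,v,b$, and add a circulation of value $\mu$ around the triangle $uvw$ with $\mu$ nonzero and different from the one forbidden value (when $a=b$, use two triangle-circulations, around $uvw$ and around $uva$). Since $K_4$ has no $3$-NZF, induction on the number of bull-growings gives the ``if'' direction at once, and it reduces the ``only if'' direction to the structural statement: \emph{if $G$ contains a spanning triangle-tree, has no $3$-NZF and $G\neq K_4$, then $G$ has a bull-reduction $G_1$ that again contains a spanning triangle-tree} (the absence of a $3$-NZF in $G_1$ being then automatic).

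I would prove this structural statement by induction on $|V(G)|$, the heart being a flow-existence lemma: \emph{a graph that contains a spanning triangle-tree but admits no bull-reduction to a graph containing a spanning triangle-tree is either $K_4$ or has a $3$-NZF}. For the setup, a spanning triangle-tree $T$ is a spanning $2$-tree, hence $2$-edge-connected, so $G$ is bridgeless; since deleting one edge never disconnects $T$, every edge-cut of $G$ contains at least two edges of $T$, and because the only $2$-edge-cuts of a $2$-tree are the two edges at a simplicial vertex, a $2$-edge-cut of $G$ would force a $2$-vertex lying in a triangle, which can be suppressed without affecting $3$-flow-admissibility. So a short reduction lets us assume $\delta(G)\ge 3$; and if $\delta(G)\ge 4$ one shows directly that $G$ has a $3$-NZF (a spanning $2$-tree of minimum degree at least $4$ carries enough redundancy for a $\mathbb{Z}_3$-connectivity argument), leaving $\delta(G)=3$. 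Now the forbidden configuration --- two adjacent $3$-vertices with a common neighbour --- is absent by hypothesis, and combining this with $\delta(G)=3$ and the $2$-tree structure of $T$ (examine a leaf triangle $xyz$, $x$ a leaf of $T$, $yz\in E(G)$) one carries out a finite case analysis on the local degrees --- whether $d_G(x)=3$, whether the leaf triangle is doubled, where the extra edges at $y,z$ lead, how edges of $G$ lying in no triangle are attached --- that pins down enough structure to route a nowhere-zero $\mathbb{Z}_3$-flow. When $G$ is triangularly-connected this part can be imported from the Fan--Lai--Xu--Zhang--Zhou decomposition $G=W\bigoplus_2 G_1'$ of Theorem~\ref{triconn3flow} with $W=W_{2k+1}$ an odd wheel, since a $2$-sum with $W_{2k+1}$ is exactly a chain of $k$ bull-reductions along the rim (the last, with $a=b$, detaching $G_1'$), so triangular-connectedness --- and with it a spanning triangle-tree --- persists down to $K_4$.

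Granting the lemma, a non-$K_4$ graph $G$ with a spanning triangle-tree and no $3$-NZF has a bull-reduction $G_1$ still containing a spanning triangle-tree: choosing the bull with $\{u,v\}$ at a leaf of $T$, deleting $u,v$ removes a leaf triangle (or two) of $T$, and the surviving triangles --- together with the new edge $ab$ when $a\neq b$, which closes a triangle with a surviving vertex once $T$ is re-rooted so that $u,v$ are added last --- reassemble into a spanning triangle-tree of $G_1$. As $G_1$ again has no $3$-NZF, the inductive hypothesis exhibits $G_1$ as built from $K_4$ by bull-growings, and reversing the single bull-reduction does the same for $G$. The step I expect to be the main obstacle is the flow-existence lemma in the middle paragraph: forcing a nowhere-zero $\mathbb{Z}_3$-flow on an arbitrary --- possibly $3$-connected --- graph that merely contains a spanning triangle-tree once the bull configuration is excluded, and keeping the degree case analysis uniform in the presence of edges of $G$ that lie in no triangle, which is exactly the phenomenon that does not arise in the triangularly-connected setting of Theorem~\ref{triconn3flow}.
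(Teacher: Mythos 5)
Your reduction of the theorem is sound and matches the paper's in outline: the verification that a bull-operation preserves the existence of a nowhere-zero $\Z_3$-flow (the rerouting along $a,u,v,b$ and the triangle-circulation corrections, including the $a=b$ case) is correct and is exactly the paper's Lemma \ref{lemma3reduce}(i), and you correctly identify that everything then hinges on the structural claim that a non-$K_4$ graph with a spanning triangle-tree and no $3$-NZF admits a bull-reduction preserving the spanning triangle-tree. The problem is that this structural claim --- which is the entire mathematical content of the theorem --- is not proved. Your middle paragraph replaces it with a sequence of assertions: that $\delta(G)\ge 4$ ``directly'' yields a $3$-NZF because a spanning $2$-tree ``carries enough redundancy for a $\Z_3$-connectivity argument,'' and that in the $\delta(G)=3$ case ``a finite case analysis on the local degrees \ldots pins down enough structure to route a nowhere-zero $\Z_3$-flow.'' Neither is carried out, and neither is routine. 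The first assertion is essentially Corollary \ref{COR:3_3-vertices} of the paper, which is there deduced \emph{from} the main theorem, so invoking it as an ingredient is close to circular unless you supply an independent proof. The second is precisely where the difficulty lives: the paper needs the whole apparatus of Section 2 (Lemma \ref{tree+}, that a triangle-tree plus two edges attached at a leaf is $\Z_3$-connected; Corollary \ref{Cor:spantreesubInZ_3}, that a graph with a spanning triangle-tree is $\Z_3$-connected as soon as it has any nontrivial $\Z_3$-connected subgraph; the lifting lemmas) together with an extremal argument on a \emph{longest triangle-path} $\TP(u,v)$ to force its endpoint $u$ to be a $3$-vertex and a leaf and then to locate the bull at $u$ and its neighbor $a$, ruling out all escapes by exhibiting a longer triangle-path. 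A ``local degree'' case analysis around a leaf triangle of $T$, as you propose, does not obviously terminate: the third neighbors of the relevant $3$-vertices can lead anywhere in the graph, and controlling them is exactly what the longest-triangle-path choice accomplishes.

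Two further points. First, the paper does not prove the $3$-NZF statement by a direct flow induction at all; it proves the stronger $\Z_3$-connectivity characterization (Theorem \ref{tri-tree_character}, with the extra $K_3\bigoplus_2$-case for $2$-vertices) and derives Theorem \ref{mainthm3flow} from it in a few lines. This strengthening is not cosmetic: the induction repeatedly contracts $\Z_3$-connected subgraphs and prescribes boundaries, operations under which bare $3$-NZF existence is not well-behaved, so a proof organized purely around $\Z_3$-flows as you propose would likely have to be reorganized around group connectivity anyway. Second, your observation that a $2$-sum with an odd wheel is a chain of bull-reductions is correct and is a nice way to see that Theorem \ref{triconn3flow} is subsumed, but it only disposes of the triangularly-connected case; as you yourself note, the edges lying in no triangle are exactly the new phenomenon, and the proposal offers no mechanism for handling them. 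In short: the easy half and the reduction are right, but the flow-existence lemma you flag as ``the main obstacle'' is the theorem, and it remains unproved.
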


Since each step of the bull-growing operation on a graph does not decrease the number of $3$-vertices in the graph, we obtain a direct corollary of Theorem \ref{mainthm3flow}, verifying Conjecture \ref{Tutteconj} for those graphs in a strong sense.

\begin{corollary}\label{COR:3_3-vertices}
Every graph with a spanning triangle-tree has a $3$-NZF, provided that it contains at most three $3$-vertices.
\end{corollary}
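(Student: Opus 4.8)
The plan is to deduce the corollary from Theorem \ref{mainthm3flow} by a counting argument that tracks the number of $3$-vertices through the bull-growing operations. I would argue the contrapositive. So suppose $G$ contains a spanning triangle-tree but has no $3$-NZF; by Theorem \ref{mainthm3flow}, $G$ arises from $K_4$ through a finite sequence of bull-growing steps $K_4 = H_0 \to H_1 \to \cdots \to H_t = G$, and it then suffices to show that the number of $3$-vertices never decreases along such a step, since $K_4$ has four $3$-vertices.

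The key step is the degree bookkeeping for a single bull-growing $H \to H'$. In the notation of the definition (new vertices $u,v$, their common neighbour $w$, and $a$, $b$ the third neighbours of $u$ and of $v$ respectively), the vertices $u$ and $v$ are created as $3$-vertices of $H'$, so exactly two new $3$-vertices appear. Among the old vertices, the edges of $H'$ incident with $\{u,v\}$ other than $uv$ are $uw$, $vw$, $ua$, $vb$, while $ab \notin E(H')$; hence only the degree of $w$ changes, by $d_{H'}(w) = d_H(w)+2$, except that when $a=b$ the degree of $a$ also changes, by $d_{H'}(a) = d_H(a)+2$ (and if $w$ happens to be joined to $a$ or $b$ by a parallel edge, the only effect is, once more, to raise a degree by $2$). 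In every case these changes are increases, so at most two old $3$-vertices — namely $w$, and $a$ in the case $a=b$ — can be destroyed. Therefore the number of $3$-vertices of $H'$ is at least that of $H$.

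Running this inequality along the sequence gives that $G$ has at least four $3$-vertices, contradicting the hypothesis that it has at most three; hence such a $G$ has a $3$-NZF. I do not anticipate any genuine difficulty; the only thing requiring care is the enumeration of the degenerate configurations ($a=b$, or $w$ adjacent to $a$ or $b$ through parallel edges), and in each of these the pertinent degree still only goes up, so the count is unaffected.
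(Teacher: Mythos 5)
Your proposal is correct and is essentially the paper's own argument: the paper derives the corollary from Theorem \ref{mainthm3flow} by the single observation that a bull-growing step never decreases the number of $3$-vertices, so every graph built from $K_4$ (which has four $3$-vertices) retains at least four of them. Your degree bookkeeping (two new $3$-vertices $u,v$ created, only $w$ --- and $a$ when $a=b$ --- having its degree raised, hence at most two old $3$-vertices destroyed) just makes that one-line observation explicit.
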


\subsection{Circular Flows and Group Connectivity}

For integers $t\ge 2s>0$, a circular $t/s$-flow of a graph $G$ is a  $t$-NZF $(D,f)$ such that $s\le |f(e)|\le t-s$ for any edge $e\in E(G)$.   The flow index was defined in \cite{Goddyn1998} as the least rational number $r$ such that $G$ has a circular $r$-flow. Jaeger \cite{Jaeger} generalized Tutte's flow conjectures and proposed a conjecture that every $4k$-edge-connected graph admits a circular $(2+1/k)$-flow. It was confirmed for $6k$-edge-connected graph by Lov\'asz et al. \cite{Lovasz}, while eventually disproved in \cite{counterexample} for $k\ge 3$. But the cases for $k=1,2$ concerning $4$-, $8$-edge-connected graphs are  still particularly important since they imply Tutte's $3$-flow and $5$-flow conjectures, respectively. Closely related to those conjectures, the authors in \cite{flowindex} studied the problem of flow index less than $3$, sandwiched between $2.5$ and $3$. They proved that every $8$-edge-connected graph has a flow index strictly less than $3$, and conjectured that $6$-edge-connectivity suffices. Here we obtain a result for the flow index less than $3$ in the spirit of Theorem \ref{4flow}.

\begin{theorem}\label{THM:2tri-trees}
Every graph with two edge-disjoint spanning triangle-trees has a flow index strictly less than $3$.
\end{theorem}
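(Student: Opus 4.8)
\medskip\noindent\textbf{Proof proposal.}
The plan is to reduce the statement to an orientation problem and then solve the latter using the two spanning triangle-trees, in the spirit of Jaeger's proof of Theorem~\ref{4flow}. Write $\Phi_c(G)$ for the flow index of $G$. By Hoffman's circulation theorem together with the min--max formula for $\Phi_c$, one has $\Phi_c(G)<3$ if and only if $G$ admits an orientation $D$ that splits every edge cut nearly in half, i.e. for every nonempty proper $X\subseteq V(G)$ one has $\bigl|\,|\partial^+_D(X)|-|\partial^-_D(X)|\,\bigr|<\tfrac13|\partial(X)|$, where $\partial^+_D(X)$ and $\partial^-_D(X)$ denote the edges of the cut $\partial(X)$ directed by $D$ out of, respectively into, $X$; equivalently, $G$ carries a real circulation with $|f(e)|\in[1,2)$ for every edge $e$. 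I would first record the easy necessary conditions this forces: a triangle-tree, being built from a triangle by repeatedly attaching $2$-vertices, is $2$-edge-connected, so each of the two edge-disjoint spanning triangle-trees $T_1,T_2$ meets every edge cut of $G$ in at least two edges. Hence $\delta(G)\ge4$ and $\lambda(G)\ge4$, a cut of size $4$ receives exactly two edges from each $T_i$, a cut of size $6$ at least two from each, and so on.

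Building the orientation $D$ is where the triangle-trees do the work. In Jaeger's proof one uses each spanning tree $T_i$ to produce an even subgraph containing $E(G)\setminus E(T_i)$, so that the two even subgraphs cover $E(G)$ and give a nowhere-zero $4$-flow. Here I would instead start from any near-Eulerian orientation $D_0$ of $G$ (one with $|d^+_{D_0}(v)-d^-_{D_0}(v)|\le1$ for every $v$) and then reorient edges to make the $\pm1$ ``defects'' at the odd-degree vertices cancel across every cut. The basic move is to reverse a directed length-$2$ path lying inside a triangle of $T_1$ or of $T_2$: this transfers defect between that path's endpoints while changing only two edge directions, and the triangles that successively build $T_1$ and $T_2$ supply many such moves. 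Processing the leaves of $T_1$ and of $T_2$ in their construction orders and routing defects along these detours, one aims to reach an orientation $D$ in which the defects partially cancel on every cut -- so that, for instance, every size-$4$ cut is split $2$--$2$, every size-$6$ cut $3$--$3$, and larger cuts are balanced enough for the $\tfrac13$-bound above. The edges of $E(G)\setminus(E(T_1)\cup E(T_2))$, which a priori could tip a cut off balance, must be absorbed by the same bookkeeping.

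The main obstacle is the rigidity of flows of index below $3$: since every edge value must lie in the short interval $[1,2)$, there is essentially no room for local surgery -- one cannot add back a single deleted edge, nor reroute one unit of flow along a short path, without forcing some edge value out of range (a unit pushed through a length-$2$ triangle detour already shifts two edge values by $1$). So $D$ cannot be assembled one edge at a time; it must be produced globally, with all edge cuts controlled simultaneously. The heart of the proof is therefore to show that the $O(1)$ many $\pm1$ defects of a near-Eulerian orientation can always be spread, through the abundance of short triangle detours supplied by $T_1$ and $T_2$, so that no cut is overloaded -- and that this can be done uniformly, including in the presence of the extra edges $E(G)\setminus(E(T_1)\cup E(T_2))$. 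I expect this to be carried out by an induction along the construction order of $T_1$, tracking carefully which edges are permitted to carry the defect at each stage.
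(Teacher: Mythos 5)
Your opening reduction---flow index $<3$ is equivalent to the existence of an orientation balancing every edge cut to within a third of its size---is a correct and reasonable first move, but everything after it is a plan rather than a proof, and the plan as described would not go through. Two concrete problems. First, a near-Eulerian orientation carries a $\pm1$ defect at \emph{every} odd-degree vertex, so there can be linearly many defects, not ``$O(1)$ many'' as you assert; more importantly, cancelling local vertex imbalances by reversing directed $2$-paths inside triangles gives you no control whatsoever over the global condition $\bigl|\,|\partial^+_D(X)|-|\partial^-_D(X)|\,\bigr|<\tfrac13|\partial(X)|$, which must hold simultaneously for exponentially many vertex sets $X$. You concede this yourself when you write that you ``expect this to be carried out by an induction along the construction order of $T_1$'': that induction is precisely the content of the theorem, and it is absent.

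The paper avoids the cut-by-cut bookkeeping entirely. It uses the theorem of \cite{flowindex} that a connected graph has flow index strictly less than $3$ if and only if it admits a \emph{strongly connected} mod $3$-orientation, so that strong connectivity replaces the balancing of all cuts. The key idea you are missing is a superposition argument (Lemma \ref{LEM: G1G2}): if $G$ can be edge-partitioned into a spanning $2$-edge-connected subgraph $G_2$ and a spanning $\Z_3$-connected subgraph $G_1$, then a strongly connected orientation of $G_2$ (Robbins' theorem) combined with a mod $3$-orientation of $G_1$ for the corrected boundary yields $G\in\setCZ$, which is even a boundary-prescribed strengthening of the statement. The real work is then producing such a partition from the two triangle-trees: one takes a maximum removable set $R_1\subseteq E(\T_1)$ (i.e., $\T_1-R_1$ remains $2$-edge-connected) and shows $\T_2+R_1\in\setZTHR$, by proving that otherwise $R_1$ is a tree whose edges determine pairwise edge-disjoint $K_4$'s inside $\T_2$, which forces a larger removable set in $\T_2$ and a contradiction. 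None of this structure appears in your proposal, so what you have is a correct framing of the problem with the proof itself still missing.
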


Almost of all the above-mentioned flow results in fact use some orientation techniques. An orientation $D$ of $G$ is a {\it mod $k$-orientation} if for each vertex $v$ of $V(G)$, $d_D^+(v)-d_D^-(v)=0\pmod{k}$. The study of $3$-flows frequently uses mod $3$-orientation, since Tutte \cite{Tutte3flow_conj1&modk_flow} proved that a graph has a $3$-NZF if and only if it admits a mod $3$-orientation. This fact was generalized by Jaeger \cite{Jaeger} who showed that a graph has a circular $(2+1/p)$-flow if and only if it admits a mod $(2p+1)$-orientation. Moreover, it was proved in \cite{flowindex} that a connected graph has a flow index strictly less than $2+1/p$ if and only if it admits a strongly connected mod $(2p+1)$-orientation. Hence, we shall prove Theorem \ref{THM:2tri-trees} using strongly connected mod $3$-orientations.

Serving for a stronger induction process in proof, we will sometimes need certain orientation with prescribed boundaries, that is the concept of {\em group connectivity} introduced by Jaeger, Linial, Payan and Tarsi \cite{JLPT92}. A {\it $\Z_3$-boundary} $\beta$ of a graph $G$ is a mapping from $V(G)$ to $\Z_3$ with $\sum_{v\in V(G)}\beta(v)\equiv 0\pmod 3$. If for any $\Z_3$-boundary $\beta$, there is an orientation $D$ of $G$ such that $d_D^+(v)-d_D^-(v)\equiv\beta(v)\pmod{3}$ for any vertex $v\in V(G)$, then we say that $G$ is {\it $\Z_3$-connected}. Denote by  $\langle \Z_3\rangle$ the set of all the $\Z_3$-connected graphs. The advantage of this stronger property is to allow us to extend a mod $3$-orientation of $G/H$ to that of $G$ when the subgraph $H$ is $\Z_3$-connected (cf.\cite{JLPT92,groupconnLai_complefamil,Lovasz}). For strongly connected mod $3$-orientations, a similar property is defined in \cite{flowindex}. Let ${\mathcal S}_3$ be the family of all graphs $G$ such that for any $\Z_3$-boundary $\beta$, there is a strongly connected orientation $D$ of $G$ satisfying that  $d_D^+(u)-d_D^-(u)\equiv\beta(u)\pmod{3}, \forall u\in V(G)$. In fact, a stronger form of Theorem \ref{THM:2tri-trees} is proved in Section \ref{sect:22trees} that for any graph $G$ with $|V(G)|\ge 4$ containing two edge-disjoint spanning triangle-trees, we have $G\in {\mathcal S}_3$.

Jaeger et al. \cite{JLPT92} proposed a conjecture, strengthening  Conjecture \ref{Tutteconj}, that every $5$-edge-connected graph is $\Z_3$-connected. Theorem \ref{triconn3flow} of Fan et al. \cite{triconZ_3} also has a form on $\Z_3$-group connectivity that, for any triangularly-connected graph $G$, $G\notin\langle \Z_3\rangle$ if and only if $G$ is constructed from $2$-sums of triangles and odd wheels.  Our $\Z_3$-group connectivity version of Theorem \ref{mainthm3flow} has a similar feature, but plus a bull-growing operation.

\begin{theorem}\label{tri-tree_character}
Let $G$ be a graph with a spanning triangle-tree. Then $G\notin\langle \Z_3\rangle$ if and only if $G$ can be constructed by one of the following operations:

$(i)$ \ $G$ is $K_3$ or $K_4$.

$(ii)$ \ $G=K_3\bigoplus_2 G_1$, where $G_1\notin\langle \Z_3\rangle$ contains a spanning triangle-tree.

$(iii)$ \ $G = {\mathcal B} \biguplus H$, where $H\notin\langle \Z_3\rangle$ contains a spanning triangle-tree.
\end{theorem}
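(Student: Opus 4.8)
The plan is to prove the two implications separately, the new technical device being a ``Bull Lemma'' recording how $\Z_3$-connectivity behaves under a bull-reduction, supported by a structural analysis of spanning triangle-trees. I will use throughout the standing facts that $K_3\notin\langle\Z_3\rangle$ (the all-$1$ boundary is not realizable at a triangle), that $K_4\notin\langle\Z_3\rangle$ (the all-$0$ boundary would be a mod $3$-orientation, forcing $K_4$ to be bipartite), and the standard reduction that $K_3\bigoplus_2 B\in\langle\Z_3\rangle$ if and only if $B\in\langle\Z_3\rangle$ (proved by forcing the boundary value $1$ at the degree-$2$ vertex of the glued $K_3$, which pins down its two edges, and translating $\Z_3$-boundaries back and forth between $K_3\bigoplus_2 B$ and $B$). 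The ``if'' direction is then an induction on the number of construction steps: (i) is a base case; in (ii) the $2$-sum reduction gives $G\notin\langle\Z_3\rangle$ at once; in (iii) the Bull Lemma does so; and since every intermediate graph again has a spanning triangle-tree and lies outside $\langle\Z_3\rangle$ by hypothesis, the induction closes.

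\emph{Bull Lemma: if $G={\mathcal B}\biguplus H$ then $G\in\langle\Z_3\rangle$ if and only if $H\in\langle\Z_3\rangle$.} For the direction $H\notin\langle\Z_3\rangle\Rightarrow G\notin\langle\Z_3\rangle$ (the one used in (iii)), take a $\Z_3$-boundary $\beta_H$ of $H$ with no compatible orientation and define $\beta_G$ on $V(G)=V(H)\cup\{u,v\}$ by $\beta_G|_{V(H)}=\beta_H$ and $\beta_G(u)=\beta_G(v)=0$ (the sum condition is inherited). In any orientation $D$ of $G$ realizing $\beta_G$, each of the $3$-vertices $u,v$ has boundary $0$ and hence is a source or a sink, and since $uv\in E(G)$ exactly one of them is a source and the other a sink; in either sub-case the edges $uw,vw$ contribute net $0$ to the boundary at $w$, and one may orient $ab$ (nothing is added when $a=b$) so that deleting $u,v$ and inserting this orientation of $ab$ produces an orientation of $H$ realizing $\beta_H$ --- a contradiction, so $\beta_G$ witnesses $G\notin\langle\Z_3\rangle$. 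The reverse direction, needed below, is the dual bookkeeping: given $\beta_G$, choose an orientation of the five bull-edges realizing $\beta_G(u),\beta_G(v)$ at $u,v$ (a short case check shows one exists), read off the induced $\Z_3$-boundary $\beta_H$ of $H$, apply $H\in\langle\Z_3\rangle$, and reassemble; the one delicate point is matching the orientation of the edge $ab$ used inside $H$, which is arranged by adjusting the chosen bull-orientation.

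For the ``only if'' direction, take a counterexample $G$ with $|V(G)|$ minimum: $G$ has a spanning triangle-tree $T$, $G\notin\langle\Z_3\rangle$, but $G$ is of none of the forms (i)--(iii). Triangle-trees have minimum degree $2$, so $\delta(G)\ge 2$. If $\delta(G)=2$, a $2$-vertex $z$ is forced to be a leaf of $T$ (its two $T$-edges exhaust its degree), so its neighbours $a,b$ satisfy $ab\in E(T)\subseteq E(G)$, whence $G=K_3\bigoplus_2(G-z)$; then $G-z$ has the spanning triangle-tree $T-z$ and, by the $2$-sum reduction, is not in $\langle\Z_3\rangle$, so $G$ is of type (ii) --- contradiction. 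Thus $\delta(G)\ge 3$; this already rules out $G$ being a $K_3$-$2$-sum (which always creates a $2$-vertex), and we may assume $G\ne K_3,K_4$. It now suffices to produce two adjacent $3$-vertices $u,v$ with a common neighbour $w$ such that the bull-reduction $H=G-u-v+ab$ still contains a spanning triangle-tree: by the Bull Lemma $H\notin\langle\Z_3\rangle$ (else $G\in\langle\Z_3\rangle$), so $G={\mathcal B}\biguplus H$ is of type (iii), the desired contradiction.

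The step I expect to be the main obstacle is this last one: that every $G$ with a spanning triangle-tree, $\delta(G)\ge 3$, $G\ne K_4$, $G\notin\langle\Z_3\rangle$ contains such a \emph{reducible bull}. The plan is to exploit the leaves of a well-chosen spanning triangle-tree $T$. Each leaf $\ell$ lies in a unique triangle $\ell ab$ of $T$ with $ab\in E(T)$, and $d_G(\ell)\ge 3$; one clean situation is $d_G(\ell)=3$ with $a$ also a $3$-vertex, where $\{\ell,a\}$ is a bull with common neighbour $b$ and deleting $\ell$ and then $a$ from $T$ leaves a spanning triangle-tree of $H$. The work lies in the remaining configurations: when a leaf has two or more edges outside $T$, or when all vertices around it have degree $\ge 4$, one re-chooses $T$ using an extra edge to move the leaves and iterates, and one must show that the only way never to expose a reducible bull is for $G$ to collapse to a rigid ``wheel-like'' graph --- which is either $K_4$ or $\Z_3$-connected, a contradiction. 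Carrying out this switching-and-case analysis, together with the degenerate sub-cases of the Bull Lemma ($a=b$, or $a$ or $b$ coinciding with $w$, or $ab$ already present so that $H$ becomes a multigraph), is where essentially all of the technical effort goes.
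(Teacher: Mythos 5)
Your ``if'' direction and the forward half of your Bull Lemma (the implication $H\notin\langle \Z_3\rangle\Rightarrow G\notin\langle \Z_3\rangle$, obtained by setting $\beta(u)=\beta(v)=0$ so that the adjacent $3$-vertices $u,v$ form a source--sink pair and the path $aub\,$--$\,vb$ simulates the oriented edge $ab$) coincide with the paper's bull-reduction lemma and are correct. The first genuine gap is the reverse half of your Bull Lemma. You state it for an arbitrary bull pair, but in that generality it is false: the paper explicitly remarks that $G_1\in\langle \Z_3\rangle$ need not imply ${\mathcal B}\biguplus G_1\in\langle \Z_3\rangle$; the biconditional holds only under the hypothesis that $G$ has a spanning triangle-tree. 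Your ``dual bookkeeping'' sketch also does not survive the cases $\beta(u)\ne 0$ or $\beta(v)\ne 0$: a $3$-vertex with nonzero boundary has out-degree minus in-degree equal to $\pm 1$, it is not a source or sink, the five bull edges then do not simulate a single oriented edge $ab$, and there is no induced boundary problem on $H$ to which $H\in\langle \Z_3\rangle$ can be applied. The paper's proof of this direction does not reduce to $H$ at all in those cases; it lifts $uw,uv$ (resp.\ $vb,vw$) to create parallel edges sitting on the spanning triangle-tree and then invokes the fact that a graph with a spanning triangle-tree containing a nontrivial $\Z_3$-connected subgraph is itself $\Z_3$-connected. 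This is where the triangle-tree hypothesis is consumed, and it cannot be dispensed with.

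The second, larger gap is the step you yourself flag as the main obstacle: producing a reducible bull in every minimum counterexample with $\delta\ge 3$ and $G\ne K_4$. You only give a plan (re-choose the spanning triangle-tree to move its leaves, iterate, and argue that otherwise $G$ ``collapses to a rigid wheel-like graph''), with no termination argument and no proof that the terminal configurations are exhausted by $K_4$ and $\Z_3$-connected graphs; that dichotomy is not what actually happens. The paper's argument is an extremal one on a longest triangle-path $\TP=\TP(u,v)$ of $G$: one first shows the end $u$ is a $3$-vertex and a leaf of $\T$; if the neighbour $a$ of $u$ having three neighbours on $\TP$ also has $d_G(a)=3$, the bull-reduction applies and preserves a spanning triangle-tree; otherwise $d_G(a)\ge 4$, and one takes a maximal $\Z_3$-connected subgraph $H$ of the lifted graph $G_{[a,bc]}$, finds a neighbour $y$ of $a$ and then a neighbour $z$ of $y$ outside $V(H)$, and reaches a contradiction either by concluding $G\in\langle \Z_3\rangle$ or by exhibiting a strictly longer triangle-path. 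Nothing collapses to a wheel; the contradiction is to the maximality of $\TP$. As written, your proposal establishes the easy direction and the easy half of the bull lemma, and leaves both hard steps as unexecuted plans whose stated strategies do not match any argument that is known to close.
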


Theorem \ref{tri-tree_character} also verifies the conjecture of Jaeger et al. \cite{JLPT92} in a strong sense that $4$-edge-connectivity suffices for $\Z_3$-connectedness on graphs containing a spanning triangle-tree.

A {\bf crystal} is a graph consisting of a triangle-path plus an extra edge connecting two leaves of the triangle-path. For instance, a wheel is a crystal by definition, and some more examples are depicted in Fig. \ref{Fig:3coloring}. Crystals are special graphs containing a spanning triangle-tree, and also play a role in our proofs. We obtain the following characterization of crystals as corollaries of Theorems \ref{mainthm3flow} and \ref{tri-tree_character}, connecting flows and vertex-coloring of crystals.

\begin{corollary}\label{Cor:crystal}
(i) \ A crystal has no $3$-NZF if and only if every vertex is of odd degree.

(ii) \ A crystal is $\Z_3$-connected if and only if it is vertex-$3$-colorable.
\end{corollary}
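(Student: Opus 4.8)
The plan is to deduce both parts of the corollary from Theorems~\ref{mainthm3flow} and~\ref{tri-tree_character} by induction on $|V(G)|$; the real work lies in controlling how the bull-reduction (and, for part (ii), the $K_3$-$2$-sum) behaves on the class of crystals and on the two combinatorial invariants ``all vertices have odd degree'' and ``vertex-$3$-colourable''. Two preservation facts drive everything. First, a bull-growing $G={\mathcal B}\biguplus H$ cannot destroy oddness of degrees: each newly added vertex has degree $3$, and apart from $d(w)$ (and $d(a)$ when $a=b$) increasing by $2$, no degree changes; since $K_4$ is $3$-regular, any graph built from $K_4$ by bull-growings has all degrees odd, which already gives the ``only if'' half of (i) via Theorem~\ref{mainthm3flow}. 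Second, a bull-growing preserves vertex-$3$-colourability in both directions: because $ab\in E(H)$ every proper $3$-colouring $c$ of $H$ has $c(a)\neq c(b)$, and $c$ then extends over $u,v$ since their colours are forced to be the two colours different from $c(w)$, with $c(v)=c(a)$ and $c(u)=c(b)$; conversely a $3$-colouring of $G$ satisfies, along the path $a\,u\,v\,b$ together with the edges to $w$, the relations $c(a)=c(v)\neq c(u)=c(b)$, so it restricts to a $3$-colouring of $H$. Finally $K_3\bigoplus_2\cdot$ preserves $3$-colourability trivially, and a crystal has minimum degree $3$, so operation $(ii)$ of Theorem~\ref{tri-tree_character} never applies to a crystal while $(i)$ applies only as $G=K_4$.

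For the harder implications I would reduce a crystal along the end of its triangle-path. Write $G=\mathcal P(\dots)+\ell_1\ell_2$, let $S_m=\{\ell_2,p,q\}$ be the last triangle of the path, and label so that $p$ is the common vertex of $e_{m-1}=S_{m-1}\cap S_m$ and $e_{m-2}=S_{m-1}\cap S_{m-2}$. Then $q$ is a $3$-vertex adjacent to the $3$-vertex $\ell_2$ with common neighbour $p$, so with $z$ the third vertex of $S_{m-1}$ the graph $G':=G-\ell_2-q+\ell_1z$ is a legitimate bull-reduction; it carries the spanning triangle-path $S_1\text{-}\cdots\text{-}S_{m-2}$ together with the edge $\ell_1z$, and all degree parities are inherited from $G$. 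The crucial dichotomy is whether the common vertex of $e_{m-2}$ and $e_{m-3}$ is $p$ or $z$: in the first case $z$ is the free (leaf) vertex of $S_{m-2}$ in the shortened path, so $G'$ is again a crystal; in the second case $p$ lies in exactly $S_m,S_{m-1},S_{m-2}$, whence $d_G(p)=4$, and one checks that $G'=K_3\bigoplus_2 G''$, where $K_3=\{p,z,r\}$ (with $r$ the third vertex of $S_{m-2}$) and $G''=G'-p$ again carries a spanning triangle-path.

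For part (i) the second case is impossible once all degrees of $G$ are odd, so the reduction always produces a strictly smaller crystal with all degrees odd; by induction (base $G=K_4$) such a graph is built from $K_4$ by bull-growings, hence so is $G$, and Theorem~\ref{mainthm3flow} together with the converse half noted above finishes (i). For part (ii) one runs the same recursion, splitting off a $K_3$-$2$-sum whenever the second case occurs and bull-reducing otherwise; the invariant ``not $3$-colourable'' is preserved at every step by the two preservation facts, so when $G$ is not $3$-colourable the recursion can terminate only at $K_4$ (never at $K_3$, which is $3$-colourable), and Theorem~\ref{tri-tree_character}$(ii)$/$(iii)$ pushes $G\notin\langle\Z_3\rangle$ back up. For the reverse implication of (ii) one argues symmetrically with the invariant ``$3$-colourable'', using that both bull-growing and $K_3$-$2$-sum preserve membership in $\langle\Z_3\rangle$ — a fact of the same flavour as the extension lemmas behind Theorem~\ref{tri-tree_character} — with the small $K_4$-free crystals serving as base cases.

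The main obstacle is precisely the structural bookkeeping at the end of the triangle-path: one must verify carefully that after a bull-reduction (or a $K_3$-$2$-sum split) the resulting graph is again a crystal, or decomposes as a $K_3$-$2$-sum whose non-trivial part is a crystal, and that the relevant invariant survives. The key simplification that makes part (i) clean is the degree-parity observation $d_G(p)=4$ in the bad case, which rules that case out entirely for all-odd crystals; part (ii) lacks this shortcut, so there the delicate points are (a) showing the recursion always bottoms out at $K_4$ when $G$ is not $3$-colourable and (b) establishing the $\langle\Z_3\rangle$-preservation under both operations for the converse direction. If these structural steps behave as sketched, Theorems~\ref{mainthm3flow} and~\ref{tri-tree_character} supply the rest mechanically.
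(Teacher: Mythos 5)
Your proposal is correct and follows essentially the same route as the paper's proof: an induction that bull-reduces the crystal at one end of its triangle-path (splitting off $K_3$-$2$-sums when the reduced graph is not itself a crystal), with the degree-parity behaviour of bull-growing settling (i) and the preservation of vertex-$3$-colourability under both operations settling (ii), exactly in the spirit of the paper's use of Lemmas \ref{lemma3reduce}, \ref{lemma3reduceTRItree} and \ref{LEM: 2-sum}. One minor caution: your blanket claim that a proper $3$-colouring of $G$ always restricts to one of $H$ across a bull-growing is overstated in general (it fails when $c(a)=c(b)=c(w)$), but in your crystal reduction $b=z$ is adjacent to $w=p$, which forces $c(z)=c(\ell_2)\neq c(\ell_1)$, so the instance you actually need does hold.
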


\section{Basic Lemmas and Bull-growing Operation}

We start with some basic lemmas, most of which have been widely used in flow theory. The following complete family properties were obtained in \cite{groupconnLai_complefamil} for $\setZTHR$ and in \cite{flowindex} for $\setCZ$.

\begin{lemma}\label{completefamily}\cite{groupconnLai_complefamil}\cite{flowindex} Let $\F\in \{\setZTHR, \setCZ\}$. Then each of the following holds.

(i) \ $K_1\in \F$.

(ii) \ If $e\in E(G)$ and $G\in \F$, then $G/e\in \F$.

(iii) \ If $H, G/H\in \F$, then $G\in \F$.

(iv) \ $2K_2\in \setZTHR$ and $4K_2\in \setCZ$.
\end{lemma}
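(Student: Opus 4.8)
The plan is to dispatch (i), (ii) and (iv) by short direct verifications and to concentrate the work on the ``gluing'' property (iii). Throughout, for an orientation $D$ write $\phi_D(v)=d_D^+(v)-d_D^-(v)$, so that $G\in\F$ asserts: for every $\Z_3$-boundary $\beta$ of $G$ there is an orientation $D$ of $G$ with $\phi_D(v)\equiv\beta(v)\pmod 3$ for all $v\in V(G)$, and additionally $D$ strongly connected when $\F=\setCZ$. For (i), $K_1$ has no edges, its only $\Z_3$-boundary is the zero map (realized by the empty orientation), and a one-vertex digraph is vacuously strongly connected. For (iv), I would take $V=\{x,y\}$ and let $k$ of the parallel edges be oriented out of $x$, so that $\phi_D(x)=2k-m$ and $\phi_D(y)=-\phi_D(x)$, where $m$ is the number of edges. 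For $m=2$ the set $\{2k-2:0\le k\le 2\}=\{-2,0,2\}$ already exhausts $\Z_3$, giving $2K_2\in\setZTHR$. For $m=4$ the strongly connected orientations are exactly those with $1\le k\le 3$, and $\{2k-4:1\le k\le 3\}=\{-2,0,2\}$ again exhausts $\Z_3$, giving $4K_2\in\setCZ$ (this is also why $4K_2$ rather than $3K_2$ is used: with three parallel edges a strongly connected orientation forces $k\in\{1,2\}$ and cannot realize $\beta(x)=0$).

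For (ii), let $e=xy$ and let $z$ be the vertex of $G/e$ obtained by identifying $x$ and $y$. Given a $\Z_3$-boundary $\beta'$ of $G/e$, I would lift it to the $\Z_3$-boundary $\beta$ of $G$ with $\beta(x)=\beta'(z)$, $\beta(y)=0$, and $\beta=\beta'$ on all other vertices (its sum equals that of $\beta'$, hence is $0$), apply $G\in\F$ to obtain $D$ with $\phi_D\equiv\beta$, and let $\bar D$ be the orientation of $G/e$ inherited from $D$ (delete $e$, discard any resulting loop). Since the two ends of $e$ receive opposite contributions from $e$ and loops contribute nothing, $\phi_{\bar D}(z)=\phi_D(x)+\phi_D(y)=\beta(x)+\beta(y)=\beta'(z)$ and $\phi_{\bar D}=\beta'$ elsewhere. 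When $\F=\setCZ$, contracting an edge of a strongly connected digraph preserves strong connectivity (the image of a directed $a\to b$ path is a directed walk between the images), so $\bar D$ is strongly connected and $G/e\in\F$.

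For (iii), I would assume first that $H$ is connected (the general case follows by iterating over the components of $H$), and let $v_H$ be the vertex of $G/H$ to which $H$ is contracted. Given a $\Z_3$-boundary $\beta$ of $G$, first orient the edges of $G$ not lying in $H$ — these are precisely the edges of $G/H$ — by applying $G/H\in\F$ to the $\Z_3$-boundary $\bar\beta$ that agrees with $\beta$ off $V(H)$ and takes the value $\bar\beta(v_H)=\sum_{v\in V(H)}\beta(v)$; call the resulting partial orientation $D_1$ (strongly connected on $G/H$ when $\F=\setCZ$). At each $v\in V(H)$ set $\gamma(v):=\big(\beta(v)-d_{D_1}^+(v)+d_{D_1}^-(v)\big)\bmod 3$, where the out- and in-degrees count only $D_1$-arcs. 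Summing over $V(H)$, the edges internal to $V(H)$ cancel while the remaining terms sum to $\phi_{D_1}(v_H)=\bar\beta(v_H)=\sum_{v\in V(H)}\beta(v)$, so $\sum_{v\in V(H)}\gamma(v)\equiv 0\pmod 3$ and $\gamma$ is a $\Z_3$-boundary of $H$. Now apply $H\in\F$ to $\gamma$ to orient $E(H)$, obtaining $D_2$; the combined orientation $D=D_1\cup D_2$ then satisfies $\phi_D\equiv\beta\pmod 3$ on all of $V(G)$ by construction. Finally, when $\F=\setCZ$, any directed path between the images of two vertices $u,w$ in the strongly connected $D_1$ lifts to a directed walk in $D$ by replacing each passage through $v_H$ — entering $H$ at some $a$ and leaving at some $b$ — with a directed $a\to b$ path inside $H$ supplied by the strongly connected $D_2$, and prefixing or appending a directed path inside $H$ if $u$ or $w$ lies in $V(H)$; hence $D$ is strongly connected.

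The routine parts are (i), (ii), (iv) and the construction of $D$ in (iii). The two points I expect to need genuine care are, in (iii), the identity $\sum_{v\in V(H)}\gamma(v)\equiv 0\pmod 3$ — which is exactly what forces the choice $\bar\beta(v_H)=\sum_{v\in V(H)}\beta(v)$ — and, for $\F=\setCZ$, the verification that splicing a strongly connected orientation of $G/H$ with one of $H$ yields a strongly connected orientation of $G$, i.e.\ the lifting argument sketched above.
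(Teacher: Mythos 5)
The paper does not actually prove Lemma~\ref{completefamily}; it is quoted as a known result from \cite{groupconnLai_complefamil} and \cite{flowindex}, so there is no in-paper argument to compare against. Your proof is the standard one from those sources and is correct: (i), (ii), (iv) are the right short verifications (including the correct explanation of why $4K_2$ rather than $3K_2$ is needed for \setCZ), and in (iii) the key identity $\sum_{v\in V(H)}\gamma(v)\equiv 0\pmod 3$ and the path-splicing argument for strong connectivity are both sound. One small imprecision in (iii): the edges of $G$ not lying in $E(H)$ are not quite the edges of $G/H$, since an edge with both ends in $V(H)$ but not in $E(H)$ becomes a loop under contraction and is deleted, so it is oriented by neither $D_1$ nor $D_2$; such an edge contributes $\pm 1$ to the individual boundaries of its two ends, so it cannot simply be ignored. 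The fix is immediate: either orient these edges arbitrarily before defining $\gamma$ (their contributions to $\sum_{v\in V(H)}\gamma(v)$ cancel in pairs, so $\gamma$ remains a $\Z_3$-boundary of $H$ and the rest of your argument is unchanged), or first replace $H$ by the edge-induced subgraph on $G[V(H)]$, noting that adding edges preserves membership in $\F$.
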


The lifting lemma below on flows is routine to verify by definitions, as observed in \cite{Lai-survey,complement}. When $va, vb\in E_G(v)$, let $G_{[v,ab]}=G-va-vb+ab$ denote the graph obtained from $G$ by lifting $va,vb$ to become $ab$.

\begin{lemma}\cite{Lai-survey}\cite{complement}\label{splitZ_3}
Let $v$ be a $4^+$-vertex of a graph $G$ with $va, vb\in E_G(v)$.

(i) \ If $G_{[v,ab]}\in \setZTHR$, then $G\in \setZTHR$.

(ii) \ If $G_{[v,ab]}$ has a $3$-NZF, then so does $G$.

(iii) \ If $G_{[v,ab]}\in \mathcal S_3$, then so does $G$.

(iv) \ If $G-v+ab\in \mathcal S_3$, then so does $G$.
\end{lemma}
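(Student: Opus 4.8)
The plan is to derive all four parts from one elementary observation about orientations. Call the following move an \emph{unfolding}: given an orientation $D$ with a designated arc $a\to b$, delete that arc and instead orient $va$ as $a\to v$ and $vb$ as $v\to b$; for a flow $(D,f)$, give both new arcs the value that $f$ had on $ab$. An unfolding changes nothing about any vertex $x\neq v$ --- the two new arcs meet $a$ and $b$ exactly as the old arc did, so $d^+(x)-d^-(x)$, and the flow excess at $x$, are untouched --- and at $v$ it changes $d^+(v)-d^-(v)$ by $1-1=0$ (and, for flows, adds equal amounts to $v$'s inflow and outflow). Hence unfolding turns a mod $3$-orientation of $G_{[v,ab]}$ realizing a $\Z_3$-boundary $\beta$ (respectively a $3$-NZF, a strongly connected such orientation) into one of $G$, as long as $v$ is already present in $G_{[v,ab]}$ with $\beta$ realized there --- which is the situation in parts (i)--(iii).

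For (i)--(iii): since $v$ is a $4^+$-vertex, $G_{[v,ab]}$ still contains $v$ and has vertex set $V(G)$. Given a $\Z_3$-boundary $\beta$ of $G$, feed the same $\beta$ to the hypothesis on $G_{[v,ab]}$ to get an orientation $D'$ (strongly connected in case (iii)) realizing it, and unfold $D'$'s arc on $ab$, choosing the directions of $va,vb$ to match the direction $D'$ gave $ab$. By the observation the resulting orientation $D$ realizes $\beta$ on $G$; part (ii) is the special case $\beta\equiv0$, and also follows directly for a $3$-NZF by copying the flow value of $ab$ onto $va$ and $vb$. For (iii) one further checks that $D$ is strongly connected: recontracting the path $a\to v\to b$ of $D$ to a single arc gives back $(G_{[v,ab]},D')$, so all of $V(G)\setminus\{v\}$ stays mutually reachable in $D$, while $v$ lies on an $a\to v\to b$ path between two vertices of that strong component, hence belongs to it. The degenerate case $a=b$ is the same, the arc $ab$ becoming the digon $a\to v\to a$.

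Part (iv) is where the real work lies, since now $v$ is removed and must be put back. First orient the $d(v)-2\ge 2$ edges at $v$ other than $va,vb$ so that their joint contribution to $d^+(v)-d^-(v)$ is $\equiv\beta(v)\pmod 3$; this is possible precisely because $d(v)\ge4$, as already two such edges give attainable values $-2,0,2$, covering every residue. Then encode this partial orientation into a $\Z_3$-boundary $\beta'$ of $G-v+ab$: absorb each of those edges' contributions into its endpoint other than $v$, and keep $\beta'(a)=\beta(a)$ and $\beta'(b)=\beta(b)$, anticipating that the new edge $ab$ will affect $a$ and $b$ exactly as the unfolded path through $v$ would. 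A short check, using that each $v$-edge contributes opposite amounts to $d^+-d^-$ at its two ends, gives $\sum_x\beta'(x)\equiv0\pmod3$. Now apply $G-v+ab\in\mathcal S_3$ to obtain a strongly connected orientation $D'$ realizing $\beta'$, unfold $D'$'s arc on $ab$ through $v$ (orienting $va,vb$ by the direction of $ab$ in $D'$; when $a=b$ the edge $ab$ is a loop, so instead reinsert $v$ via the digon $a\to v\to a$), and restore the partially oriented $v$-edges. The boundary comes out correct at every vertex by the observation, the construction at $v$, and the definition of $\beta'$, and strong connectivity follows as in (iii). The one genuinely delicate point in the whole lemma is choosing $\beta'$ correctly in (iv) and verifying its zero-sum condition; everything else is bookkeeping.
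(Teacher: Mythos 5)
The paper does not actually prove this lemma: it is imported as ``routine to verify by definitions'' with citations to \cite{Lai-survey} and \cite{complement}, so there is no in-paper argument to compare against. Your write-up is the standard proof and is correct: (i)--(iii) follow from the unfolding move (replacing the arc on $ab$ by $a\to v\to b$ preserves $d^+-d^-$ at every vertex, flow conservation, and strong connectivity, since any directed path using the arc $ab$ can be rerouted through $v$), and (iv) correctly exploits $d(v)\ge 4$ to pre-orient the $d(v)-2\ge 2$ remaining $v$-edges so as to realize any residue of $\beta(v)$ modulo $3$, with the zero-sum check on $\beta'$ done properly. No gaps.
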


By repeatedly applying Lemma \ref{splitZ_3}(i), we immediately obtain the following more general lifting lemma, which will be a useful tool in our proofs.

\begin{lemma}\label{lemSplitPath}
Let $P$ be a path from $u$ to $v$ in $G$. If $G-E(P)+uv\in \setZTHR$, then $G\in \setZTHR$.
\end{lemma}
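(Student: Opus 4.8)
The plan is to iterate Lemma \ref{splitZ_3}(i) along $P$, lifting its internal vertices one at a time until only the edge $uv$ remains. Write $P\colon u=v_0,v_1,\dots,v_\ell=v$, where $\ell=|E(P)|$. If $\ell\le 1$ then $G-E(P)+uv$ is simply $G$ again (delete the edge $uv$, if it is present, then add it back, discarding any loop), so there is nothing to prove; hence assume $\ell\ge 2$.

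The key preliminary point is that the hypothesis $G-E(P)+uv\in\setZTHR$ automatically forces every internal vertex $v_i$ ($1\le i\le \ell-1$) of $P$ to be a $4^+$-vertex of $G$ --- and this is exactly the degree condition demanded by Lemma \ref{splitZ_3}(i). Indeed, a $\Z_3$-connected graph on at least two vertices has minimum degree at least $2$, since a vertex $w$ with $d(w)\le 1$ has $d^+_D(w)-d^-_D(w)$ confined to $\{0\}$ (when $d(w)=0$) or to $\{\pm 1\}$ (when $d(w)=1$) in every orientation $D$, and therefore cannot realize a value of the boundary prescribed freely at $w$. Since $v_i$ is not incident with $uv=v_0v_\ell$, we have $d_{G-E(P)+uv}(v_i)=d_G(v_i)-2$; as $G-E(P)+uv\in\setZTHR$ has $|V(G)|\ge \ell+1\ge 3$ vertices, this is at least $2$, so $d_G(v_i)\ge 4$.

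Now set $G^{(0)}=G$ and, for $i=1,\dots,\ell-1$, let $G^{(i)}=\bigl(G^{(i-1)}\bigr)_{[v_i,\,v_0v_{i+1}]}$, i.e.\ obtain $G^{(i)}$ from $G^{(i-1)}$ by lifting the two edges $v_0v_i$ and $v_iv_{i+1}$ at $v_i$ into a single edge $v_0v_{i+1}$. An easy induction on $i$ shows that $G^{(i)}$ is obtained from $G$ by deleting the $i+1$ path edges $v_0v_1,\dots,v_iv_{i+1}$ and inserting $v_0v_{i+1}$; in particular the edges incident with $v_i$ in $G^{(i-1)}$ are exactly those of $G$ with $v_{i-1}v_i$ replaced by $v_0v_i$, so $d_{G^{(i-1)}}(v_i)=d_G(v_i)\ge 4$ and each lift is legitimate, and $G^{(\ell-1)}=G-E(P)+uv$. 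By hypothesis $G^{(\ell-1)}\in\setZTHR$, and since $G^{(i)}=\bigl(G^{(i-1)}\bigr)_{[v_i,v_0v_{i+1}]}$ with $v_i$ a $4^+$-vertex of $G^{(i-1)}$, Lemma \ref{splitZ_3}(i) gives the implication $G^{(i)}\in\setZTHR\Rightarrow G^{(i-1)}\in\setZTHR$; applying it for $i=\ell-1,\ell-2,\dots,1$ yields $G=G^{(0)}\in\setZTHR$. (One may package the same computation as an induction on $\ell$: lift once at $v_1$ to replace $P$ by the length-$(\ell-1)$ path $v_0,v_2,\dots,v_\ell$, check that the reduced graph still satisfies the hypothesis, and invoke the inductive hypothesis.) The only step in the argument that needs genuine care is the degree count of the second paragraph guaranteeing that every lift is carried out at a $4^+$-vertex; with that in hand, the rest is pure bookkeeping.
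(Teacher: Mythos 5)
Your argument is correct and is essentially the paper's own: the paper dispatches this lemma in one line by ``repeatedly applying Lemma~\ref{splitZ_3}(i)'' along the path, which is exactly your iterated-lifting scheme $G^{(0)},\dots,G^{(\ell-1)}$. The only thing you add is the explicit verification that each internal vertex of $P$ is a $4^{+}$-vertex (so that the hypothesis of Lemma~\ref{splitZ_3}(i) is met), a detail the paper leaves implicit; your degree count and the telescoping of the lifts are both correct.
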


We refer to this operation as {\it lifting $E(P)$ in $G$ to become a new edge $uv$}.

In a tree $T$, for any $u,v\in V(T)$ there is a unique $uv$-path from $u$ to $v$, denoted by $P_{uv}$. A $uwv$-path means a path from $u$ to $v$ which goes through $w$, denoted by $P_{uwv}$. Fix a triangle-tree $\T$ and let $x,y\in V(\T)\cup E(\T)$ be two nonadjacent elements. Then there is a unique $xy$-triangle-path, denoted by $\TP(x,y,\T)$. We write $\TP(x,y)$ for convenience if no confusion occurs.

\begin{lemma}\label{tree+}
Let $G$ be a graph containing a spanning triangle-tree ${\mathcal T}={\mathcal T}(x_1,x_2,\ldots,x_n)$, where $x_1$ is a leaf of $\T$.

(i) \ For any $j,k>1$, the graph ${\mathcal T}+x_1x_j+x_1x_k$ is
$\Z_3$-connected.

(ii) \ Let $u,v,w\in V({\mathcal T})$. If $w\notin V(\TP(u,v,\T))$, then the graph ${\mathcal T}+uw+vw$ is $\Z_3$-connected.

(iii) \ If $G-\T$ contains a cycle, then $G\in\setZTHR$.
\end{lemma}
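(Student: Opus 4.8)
The plan is to prove Lemma \ref{tree+} in three parts, building up from the structural definition of a triangle-tree and exploiting the complete-family closure properties of $\setZTHR$ from Lemma \ref{completefamily}. The unifying idea is that adding two ``chords'' incident to a vertex $w$ (or to the leaf $x_1$) creates, together with the triangle-path joining the relevant endpoints, a structure one recognizes as $\Z_3$-connected after contracting the part of the triangle-tree that lies off the active path. Concretely, for part (ii) I would let $\TP(u,v,\T)$ be the unique $uv$-triangle-path inside $\T$, and observe that the remaining triangles of $\T$ (those not meeting this triangle-path except at a single vertex) each hang off as a pendant triangle sharing exactly one vertex with what has been built so far; contracting each such pendant triangle in turn preserves membership in and out of $\setZTHR$ by Lemma \ref{completefamily}(ii), so it suffices to treat the case $\T = \TP(u,v,\T)$. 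After these contractions, $w$ has been identified with some vertex of the triangle-path, and ${\mathcal T}+uw+vw$ collapses to a small explicit graph — a triangle-path with two extra edges forming, together with edges of the path, a wheel-like or $K_4$-like structure — which is $\Z_3$-connected by the known list (e.g. $K_4\in\setZTHR$, even wheels $\in\setZTHR$) and by Lemma \ref{completefamily}(iii)--(iv). The hypothesis $w\notin V(\TP(u,v,\T))$ is what guarantees the two new edges land on opposite ``sides'' so that an odd structure does not obstruct $\Z_3$-connectivity.

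For part (i), I would argue that it is essentially the special case $u=v=x_1$ of the same phenomenon, or prove it directly: since $x_1$ is a leaf, it lies in a single triangle $x_1yz$ of $\T$; adding $x_1x_j$ and $x_1x_k$ gives $x_1$ degree $4$, and I can lift the two new edges through $x_1$ via Lemma \ref{splitZ_3}(i) (or Lemma \ref{lemSplitPath}) to reduce to showing that $\T$ with a suitable added edge joining $x_j$ to $x_k$, or $\T$ with a doubled edge on the pendant triangle, is $\Z_3$-connected; repeatedly contracting pendant triangles of $\T$ reduces this to a small base case such as $2K_2 \in \setZTHR$ or $K_4\in\setZTHR$, invoking Lemma \ref{completefamily}. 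Part (iii) is different in flavor: if $G-\T$ contains a cycle $C$, then since $\T$ is spanning and every edge of $\T$ lies in a triangle, I would contract $\T$ to a single vertex — which is legitimate because a triangle-tree is built from triangles $K_3$, and $K_3\in\setZTHR$ (indeed $K_3 = K_4/e$ after suitable contraction, or directly), so $\T\in\setZTHR$ and hence by Lemma \ref{completefamily}(iii) it suffices that $G/\T\in\setZTHR$; but $G/\T$ contains the contracted image of $C$, which is a cycle with all vertices identified, i.e. a bouquet of loops plus possibly more, and in any case contains $2K_2$ or a longer even structure after cleanup, landing in $\setZTHR$ by Lemma \ref{completefamily}(ii),(iv).

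The main obstacle I anticipate is part (ii), specifically the bookkeeping needed to reduce the general triangle-tree to its triangle-path core while keeping track of where $w$ ends up after the contractions, and then correctly identifying the resulting small graph and citing the right base case from the $\setZTHR$ family — one must be careful that the contraction of pendant triangles never accidentally identifies $u$, $v$, and $w$ in a way that produces an \emph{odd} wheel (which is \emph{not} $\Z_3$-connected), and this is exactly where the hypothesis $w\notin V(\TP(u,v,\T))$ must be used decisively rather than cosmetically. A secondary technical point is making sure Lemma \ref{splitZ_3}(i) is only applied at genuine $4^+$-vertices, which forces the order of operations (do the liftings at $x_1$ or $w$ before over-contracting).
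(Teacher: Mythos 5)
Your proposal rests on two facts that are false in this setting, and they undermine all three parts. First, neither $K_3$ nor $K_4$ is $\Z_3$-connected: $K_4$ is the odd wheel $W_3$ (it does not even admit a $3$-NZF), and $K_3$ is explicitly one of the excluded graphs in Theorem \ref{tri-tree_character}(i). So in part (iii) you may not ``contract $\T$ to a single vertex'' and invoke Lemma \ref{completefamily}(iii) --- that step requires the contracted subgraph to lie in $\setZTHR$, and a triangle-tree never does (it is an iterated $2$-sum of triangles, hence outside $\setZTHR$ by Lemma \ref{LEM: 2-sum}). Likewise your base cases ``$K_4\in\setZTHR$, wheel-like structure'' for parts (i) and (ii) are exactly the obstructions, not the certificates. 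Second, your reduction in part (ii) by ``contracting pendant triangles'' misreads Lemma \ref{completefamily}(ii): contraction preserves membership in $\setZTHR$ in one direction only ($G\in\setZTHR\Rightarrow G/e\in\setZTHR$); to pass from $G/H\in\setZTHR$ back to $G\in\setZTHR$ you need $H\in\setZTHR$ via part (iii) of that lemma, and a pendant triangle is not $\Z_3$-connected. A related failure occurs in your part (i): lifting the two \emph{new} edges at $x_1$ reduces to $\T+x_jx_k$, which can fail to be $\Z_3$-connected (e.g.\ when $\TP(x_j,x_k,\T)+x_jx_k$ is a $K_4$, the whole graph is a $2$-sum of triangles with $K_4$, hence outside $\setZTHR$ by Lemma \ref{LEM: 2-sum}).

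The paper's argument avoids all of this by contracting only \emph{doubled edges} ($2K_2\in\setZTHR$), never triangles. For (i) it lifts the two \emph{tree} edges $x_1x_2,x_1x_3$ at the leaf $x_1$ (legitimate since $x_1$ now has degree $4$), creating a parallel pair on $x_2x_3$; contracting that $2$-cycle turns the next triangle of $\T-x_1$ into a $2$-cycle, and so on down the whole tree, leaving exactly $x_1$ joined to the contracted blob by $x_1x_j,x_1x_k$, i.e.\ $2K_2\in\setZTHR$, whence Lemma \ref{completefamily}(iii) and Lemma \ref{splitZ_3}(i) finish. Part (ii) then follows by applying (i) to the sub-triangle-tree $\TP(u,v)\cup\TP$ in which $w$ is a leaf (this is where $w\notin V(\TP(u,v,\T))$ is used), and (iii) follows by choosing three vertices $u,v,w$ on the cycle of $G-\T$, lifting two of its three arcs to chords via Lemma \ref{lemSplitPath}, and applying (ii) to whichever of the two configurations has the third vertex off the relevant triangle-path. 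You would need to rebuild your argument around this doubled-edge propagation; the pendant-triangle contraction route cannot be repaired.
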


\begin{proof}
(i) \ Since $x_1x_2x_3$ is a triangle in $H={\mathcal T}+x_1x_j+x_1x_k$, we lift $x_1x_2, x_1x_3$ to obtain a graph $H_{[x_1,x_2x_3]}$ which contains parallel edges $x_2x_3$. Applying Lemma \ref{completefamily}(iii),(iv) to contract $2$-cycles consecutively along ${\mathcal T}-x_1$, we obtain a $2K_2\in\setZTHR$ which consists of the edges $x_1x_j,x_1x_k$. Hence,  $H_{[x_1,x_2x_3]}\in\setZTHR$, and so $H\in\setZTHR$ by Lemma \ref{splitZ_3}(i).

(ii) \ Since $w$ is not in $\TP(u,v,\T)$, in $\T$ there is a shortest triangle-path $\TP$ from $w$ to an edge in $\TP(u,v,\T)$ among all possible choices. Then $\TP(u,v)\cup \TP$ is a triangle-tree, where $w$ is a leaf of it. Set $H=\TP(u,v)\cup \TP+uw+vw$. Then $H\in\setZTHR$ by Lemma \ref{tree+}(i). In ${\mathcal T}+uw+vw$, we contract $H$ and then contract the resulting $2$-cycles consecutively, it eventually results in a $K_1$. Hence, ${\mathcal T}+uw+vw\in\setZTHR$ by Lemma \ref{completefamily}(iii).  Note that the Lemma also holds when $u=v$, in which case we can choose any triangle containing $u$ as $\TP(u,v,\T)$.

(iii) \ Let $C$ be a cycle of $G-\T$. If $V(C)=2$, there is a $2$-cycle $uw$ of $G$. Then Lemma \ref{tree+}(ii) is applied with $u=v$, and so ${\mathcal T}+uw+uw$ is $\Z_3$-connected.

If $V(C)\geq 3$, suppose $u,v,w\in V(C)$, and $E(C)$ consists of three edge-disjoint paths $P_{uv}, P_{vw}, P_{wu}$ in the cyclic order. There is a unique triangle-path $\TP(u,v,\T)$ since $\T$ is a spanning triangle-tree. If $w\notin V(\TP(u,v,\T))$, then we lift $P_{vw}, P_{wu}$ to become two edges $vw,uw$, and $\T+vw+uw\in\setZTHR$ by Lemma \ref{tree+}(ii). Thus, $G\in\setZTHR$ by Lemma \ref{lemSplitPath}. If  $w\in V(\TP(u,v,\T))$, then we must have $u\notin V(\TP(w,v,\T))$. In this case we lift $P_{vu}, P_{wu}$ to become two edges $vu,wu$, Hence,  $\T+vu+wu\in\setZTHR$ by Lemma \ref{tree+}(ii), and so $G\in\setZTHR$ by Lemma \ref{lemSplitPath} again.
\end{proof}

Note that, if any adding edges in Lemma \ref{tree+} (i) and (ii) are replaced by corresponding paths connecting the end vertices, we still get $\Z_3$-connected graphs by Lemma \ref{lemSplitPath}.
From Lemma \ref{tree+}, we also obtain the following corollary by applying Lemma \ref{completefamily} to contract $\Z_3$-connected subgraphs.

\begin{corollary}\label{Cor:spantreesubInZ_3}
Let $G$ be a graph with a spanning triangle-tree. Then $G\in \setZTHR$  if and only if it contains a nontrivial $\Z_3$-connected subgraph.
\end{corollary}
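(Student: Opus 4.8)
The forward direction is immediate: a graph with a spanning triangle-tree has at least three vertices, so if $G\in\setZTHR$ then $G$ is itself a nontrivial $\Z_3$-connected subgraph. For the converse I would fix a spanning triangle-tree $\T$ of $G$ and a nontrivial $\Z_3$-connected subgraph $H$, use the standard fact that a $\Z_3$-connected graph on at least two vertices is $2$-edge-connected (hence contains a cycle), and split into two cases according to whether $H$ shares an edge with $\T$.

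\smallskip
The easy case is $E(H)\cap E(\T)=\emptyset$: then a cycle of $H$ is a cycle of $G-\T$, so Lemma \ref{tree+}(iii) gives $G\in\setZTHR$ at once.

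\smallskip
In the remaining case $E(H)\cap E(\T)\neq\emptyset$, the plan is to enlarge $H$ step by step inside $G$. As long as some edge $ab\in E(H)\cap E(\T)$ lies in a triangle $abc$ of $\T$ not already contained in the current graph, I would add to it whichever of $c,\,ac,\,bc$ are missing. Such a step is either the addition of an edge (so that the old graph is a spanning connected $\Z_3$-connected subgraph whose contraction is $K_1$) or the attachment of a new vertex to two old vertices by two edges (so that the contraction of the old graph is $2K_2$); in both situations Lemma \ref{completefamily} keeps us inside $\setZTHR$, and hence the final graph $H^{*}\subseteq G$, reached when no step applies, is $\Z_3$-connected. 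The process halts because $|V|+|E|$ strictly increases at each step. When it halts, the set $F:=E(H^{*})\cap E(\T)$ has the property that $ab\in F$ and $abc$ a triangle of $\T$ force $ac,bc\in F$; since $F\neq\emptyset$ and the triangles of a triangle-tree are connected under sharing an edge, propagating this property from one triangle to an edge-adjacent one forces $F=E(\T)$. Thus $\T\subseteq H^{*}$, so $H^{*}$ is a spanning connected $\Z_3$-connected subgraph of $G$; contracting it collapses $G$ to $K_1\in\setZTHR$, and Lemma \ref{completefamily}(iii) yields $G\in\setZTHR$.

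\smallskip
The hard part will be this second case, and in particular the point that one may not assume the witnessing subgraph $H$ is edge-disjoint from $\T$; the argument must diffuse $H$ across the entire triangle-tree while preserving $\Z_3$-connectivity. The two observations that make this work are that completing a single triangle of $\T$ onto an already-$\Z_3$-connected subgraph is merely a $2K_2$-contraction, and that the triangles of $\T$ form a family connected under edge-sharing, so that this triangle-closure, once nonempty, must exhaust $E(\T)$. All remaining ingredients---the forward direction, the disjoint case via Lemma \ref{tree+}(iii), and the closure properties of $\setZTHR$---are routine.
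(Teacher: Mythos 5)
Your proposal is correct and follows essentially the same route as the paper: the disjoint case is dispatched by Lemma \ref{tree+}(iii) exactly as in the paper, and your triangle-by-triangle enlargement of $H$ along $\T$ is precisely the paper's ``contract $H$, then repeatedly contract $2$-cycles'' argument read in the uncontracted graph, both resting on Lemma \ref{completefamily}(iii),(iv). The only difference is that you spell out the propagation across edge-sharing triangles of $\T$ explicitly, which the paper leaves implicit.
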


\begin{proof}
Let $H$ be a nontrivial $\Z_3$-connected subgraph and $\T$ a spanning triangle-tree of $G$. If $E(\T)\cap E(H)\neq \emptyset$, then in $G$ we contract the $\Z_3$-connected subgraph $H$ and then repeatedly contract $2$-cycles to eventually get a singleton $K_1$. Thus, $G\in \setZTHR$ by Lemma \ref{completefamily}(iii). Otherwise, $E(\T)\cap E(H)= \emptyset$. Since a $\Z_3$-connected graph must be $2$-edge-connected, $H$ contains a cycle which is edge-disjoint with the spanning triangle-tree $\T$ of $G$. Hence, $G\in\setZTHR$ by Lemma \ref{tree+}(iii).
\end{proof}

Now we present the bull-growing operation as a key tool in our later proofs.

\begin{lemma}\label{lemma3reduce}
Let $G={\mathcal B}\biguplus G_1$. The following statements hold.

(i) \ $G$ has a $3$-NZF if and only if $G_1$ has a $3$-NZF.

(ii) \ If $G\in\setZTHR$, then $G_1\in \setZTHR$. Conversely, if $G_1\notin \setZTHR$, then $G\notin\setZTHR$.
\end{lemma}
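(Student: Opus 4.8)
The plan is to analyze the local structure of the bull-growing operation directly. Recall the setup: $G$ has two adjacent $3$-vertices $u,v$ with common neighbor $w$, the third neighbor of $u$ is $a$, the third neighbor of $v$ is $b$, and $G_1 = H = G - u - v + ab$ (deleting a loop if $a=b$). So in $G$ the edges incident to $\{u,v\}$ are exactly $uv, uw, ua, vw, vb$, a ``bull''-shaped gadget, and passing to $G_1$ replaces this gadget by the single edge $ab$ (or, when $a=b$, simply deletes $u,v$ and the gadget entirely). The key observation driving both parts is that $ab$ in $G_1$ ``simulates'' a path through the bull in $G$: given any orientation data on $G_1$, one can route the constraints through $u,v,w$, and conversely a mod $3$-orientation of $G$ restricted away from $\{u,v\}$ forces consistent behavior on $ab$.

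For part (i), I would argue via mod $3$-orientations, using Tutte's equivalence (stated in the introduction). First suppose $G_1$ has a mod $3$-orientation $D_1$. Extend it to $G$ as follows: keep the orientation of all edges of $G_1$ other than $ab$, orient $ua$ and $vb$ so that they agree in direction with $ab$ (say $ab$ is oriented $a\to b$; then orient $a\to u$ and $v\to b$), and then we must choose orientations of $uv, uw, vw$ to make $u, v, w$ balanced mod $3$. Vertex $u$ currently has one incoming arc ($a\to u$) among its three edges, and vertex $v$ has one outgoing arc ($v\to b$); a short case check (three edges each at $u,v$, and the shared vertex $w$ only sees $uw,vw$ change) shows one can always complete this — essentially because $d^+ - d^- \pmod 3$ at a $3$-vertex can be made any residue by orienting its three edges, and the cyclic triangle $uvw$ plus the two pendant edges has enough freedom. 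Conversely, if $G$ has a mod $3$-orientation $D$, then at the $3$-vertices $u$ and $v$ the out-degrees are $0$ or $3$; tracing the forced directions of $uw, vw, uv, ua, vb$ under the balance condition at $u$, $v$, and $w$ shows that $ua$ and $vb$ must ``line up'' (the net flow entering the bull at $a$ equals the net flow leaving at $b$), so orienting $ab$ accordingly and keeping all other edges gives a mod $3$-orientation of $G_1$. The case $a=b$ is easier: the bull gadget is then a closed piece attached to the rest only at $w$, so it contributes $0$ to the balance at $w$, and one checks it can always be balanced internally.

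For part (ii), I would use the lifting lemma, Lemma \ref{splitZ_3}. The cleaner direction is ``$G \in \setZTHR \Rightarrow G_1 \in \setZTHR$'': observe that $w$ is a $3^+$-vertex in $G$ with neighbors including $u$ and $v$; I claim $G$ contracted/lifted appropriately yields a graph with $G_1$ as a $\setZTHR$-reduction. Concretely, the path $u$–$w$–$v$ can be handled, but more directly: in $G$, the vertex $u$ has degree $3$ — not a $4^+$-vertex — so I cannot lift at $u$ immediately. Instead I would first show that $G$ contains the triangle $uvw$ and lift the path $a$–$u$–$w$–$v$–$b$; but $u$ and $v$ have degree $3$. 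The right move is: since $u$ is a $3$-vertex with the triangle $uvw$ present, if $G\in\setZTHR$ then by a standard reduction ($3$-vertices in $\setZTHR$ graphs and Lemma \ref{completefamily}) one suppresses $u$ and $v$; more precisely, I expect to use that $G\in\setZTHR$ together with the complete-family axioms to contract the triangle-ish structure and land on exactly $G_1$, or its further $\setZTHR$-reduction — this is where the argument needs care, because degree-$3$ vertices are not liftable and one must instead track the $\Z_3$-boundary through the gadget by hand, showing any boundary $\beta_1$ on $G_1$ lifts to a boundary $\beta$ on $G$ (set $\beta$ on common vertices, push the value of $ab$'s contribution through $u,v,w$ with an appropriate choice of $\beta(u),\beta(v),\beta(w)$ summing to $0$), then restrict the resulting orientation. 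For the contrapositive direction ``$G_1 \notin \setZTHR \Rightarrow G \notin \setZTHR$'' I would exhibit a specific $\Z_3$-boundary on $G_1$ with no realizing orientation and lift it to a bad boundary on $G$ via the same correspondence, so that a realizing orientation of $G$ would restrict to one of $G_1$, a contradiction. The main obstacle, and the step I expect to spend the most care on, is this boundary-bookkeeping through the bull gadget in part (ii): unlike part (i) where I only need existence of one good orientation, here I must show the correspondence of boundaries is surjective in the right direction, and handle the degenerate case $a=b$ (where the gadget becomes a pendant $\Z_3$-connected-or-not piece at $w$) separately; I would verify the $a=b$ subcase by direct inspection of the small graph on $\{u,v,w\}$ with a doubled edge, which is easily seen to be in $\setZTHR$, so contracting it leaves $G_1$ and Lemma \ref{completefamily}(iii) finishes it.
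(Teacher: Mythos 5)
Your proposal is correct and follows essentially the same route as the paper's proof: both parts are handled by tracking mod $3$-orientations and $\Z_3$-boundaries through the bull gadget, using the key fact that two adjacent $3$-vertices with zero boundary are forced to be locally all-in and all-out, so that the path $a$--$u$--$v$--$b$ simulates the edge $ab$ and the restriction/extension of orientations goes through. The one detail the paper pins down that you leave floating is the explicit choice $\beta(u)=\beta(v)=0$ in part (ii) (which is precisely what triggers that forcing); your detours through lifting, contraction of the gadget, and a separate $a=b$ case are unnecessary, since the same boundary bookkeeping covers $a=b$ uniformly.
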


\begin{proof}
We adopt the notation as in the definition of bull-growing operation. Let $G_1=G-u-v+ab$, where $u,v$ are two adjacent $3$-vertices with a common neighbor $w$.

(i) is obvious and we shall only prove (ii). In fact, (i) follows from a similar argument below by replacing $\beta_1$-boundary with a zero-boundary. One may also see that the path $auvb$ of $G$ plays the same role as the edge $ab$ of $G_1$ in a mod $3$-orientation and the process can be reversed as well.

(ii) \ We shall prove $G_1\in \setZTHR$ by definition. Let $\beta_1$ be a $\Z_3$-boundary of $G_1$. Define $\beta: V(G)\rightarrow \Z_3$ as follows:
$$\left\{
\begin{aligned}
\beta(u)&=\beta(v)=0, \\
\beta(x)&=\beta_1(x) , \forall x\notin\{u,v\}.
\end{aligned}
\right.
$$
Since $\sum_{t\in V(G)} \beta(t) =\sum_{x\in V(G_1)} \beta_1(x)\equiv0 \pmod3$, $\beta$ is a $\Z_3$-boundary of $G$. As $G\in\setZTHR$, $G$ has an orientation $D$ such that $d_D^+(x)-d_D^-(x)\equiv\beta(x)\pmod3, \forall x\in V(G)$. Since $\beta(u)=\beta(v)=0$ and $u,v$ are adjacent, one of $u,v$ is oriented as all ingoing and the other is oriented as all outgoing. Thus $uw$ and $vw$ receive opposite orientations in $D$. Moreover, the edges $au,vb$ are either oriented from $a$ to $u$ and from $v$ to $b$, or all receive opposite directions.  So, we can orient $ab$ the same as $au$ and keep the orientations of the other edges of $G_1$ the same as $D$. Then this gives an orientation $D_1$ of $G_1$ with $d_{D_1}^+(y)-d_{D_1}^-(y)\equiv\beta_1(y)\pmod3,  \forall y\in V(G_1)$. So, $G_1\in\setZTHR$ by definition.
\end{proof}

The reverse of Lemma \ref{lemma3reduce} (ii) is not true in general, for example, it fails when $G_1$ is an odd wheel (and $a\neq b$ in bull-growing). However, when $G$ contains a spanning triangle-tree, Lemma \ref{lemma3reduce} can be strengthened to both necessary and sufficient.

\begin{lemma}\label{lemma3reduceTRItree}
Let $G$ be a graph with a spanning triangle-tree and $G=B \biguplus G_1$. Then $G\in\setZTHR$ if and only if $G_1\in \setZTHR$.
\end{lemma}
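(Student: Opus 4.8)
The plan is to combine the one-directional implications already available in Lemma \ref{lemma3reduce}(ii) with the structural leverage provided by the spanning triangle-tree. The direction ``$G\in\setZTHR \Rightarrow G_1\in\setZTHR$'' is exactly the first half of Lemma \ref{lemma3reduce}(ii), so nothing new is needed there. Hence the whole content of the lemma is the converse: assuming $G_1\in\setZTHR$, show $G\in\setZTHR$. First I would fix a spanning triangle-tree $\T$ of $G$ and recall the notation of the bull-growing operation: $u,v$ are adjacent $3$-vertices of $G$ with common neighbor $w$, the third neighbors of $u$ and $v$ are $a$ and $b$ respectively, and $G_1 = G-u-v+ab$ (deleting a loop if $a=b$).

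The key step is to produce a nontrivial $\setZTHR$-subgraph of $G$, after which Corollary \ref{Cor:spantreesubInZ_3} immediately gives $G\in\setZTHR$. I would argue as follows. Since $G_1\in\setZTHR$, the graph $G_1$ is $2$-edge-connected, so in $G_1$ the edge $ab$ lies on a cycle; equivalently there is a path $Q$ from $a$ to $b$ in $G_1-ab$, i.e.\ a path $Q$ from $a$ to $b$ in $G$ avoiding $u$ and $v$. Now consider the cycle $C$ of $G$ obtained by joining $Q$ with the path $a\,u\,v\,b$ (or just the edges $au$, $vb$ together with $Q$ when $a=b$, which yields a cycle through $u,v$). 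This cycle $C$ uses the vertices $u$ and $v$, each of which has degree exactly $3$ in $G$, so the third edges $uw$ and $vw$ are not in $C$. Thus $\{uw, vw\}$ is a set of two edges of $G$ joining the cycle $C$ to the vertex $w$, and I claim $w\notin V(C)$ is not needed: what I actually want is to exhibit, inside $G$, the graph $C + uw + vw$, or rather to recognize it as (a subdivision of) a known $\setZTHR$-graph. The cleanest route is: in $G$, lift the $u$--$v$ subpath of $C$ together with $Q$ so that only a small explicit gadget remains. More precisely, $C+uw+vw$ contains $uw, wv, vb, \ldots, a, au$, and also the chord $uv$; contracting the internal vertices of $Q$ (all of which, being on a cycle, can be handled by Lemma \ref{completefamily}) reduces $C+uw+vw$ to a multigraph on $\{u,v,w\}$ (and possibly $a,b$) in which $u$ and $v$ are joined to $w$ and to each other, together with the triangle $uvw$; this is a graph on three vertices with at least two parallel classes of multiplicity $\ge 2$, hence it lies in $\setZTHR$ by repeated application of Lemma \ref{completefamily}(iii),(iv). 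Therefore $C+uw+vw$, and with it $G$, contains a nontrivial $\setZTHR$-subgraph.

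An alternative and perhaps more uniform way to organize the second half, which I would try first, is to avoid exhibiting a subgraph by hand and instead reduce directly in $G$: take the path $P = a\,u\,v\,b$ in $G$ and observe $G - E(P) + ab$ is exactly $G_1$ together with the isolated-from-$P$ vertex $w$ reattached by $uw, vw$ — this does not quite match Lemma \ref{lemSplitPath} because $w$ loses edges — so the honest argument does need the cycle $C$ above. The main obstacle I anticipate is the degenerate bookkeeping: the case $a=b$ (where $ab$ is a deleted loop, so ``the edge $ab$ lies on a cycle of $G_1$'' must be reinterpreted as $a$ having degree $\ge 2$ off of $u,v$, which follows from $2$-edge-connectivity of $G_1$), and making sure that when $Q$ is a single edge or $w$ coincides with an internal vertex of $Q$ the contraction argument still produces a genuinely nontrivial $\setZTHR$-subgraph rather than collapsing to $K_1$ prematurely. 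Handling those few small configurations explicitly — essentially by noting that in every case we end with a multigraph on at most a handful of vertices having two edge-disjoint spanning connected sub-multigraphs each with a multi-edge, which is in $\setZTHR$ by Lemma \ref{completefamily} — is the only place where care is required; the rest is a direct appeal to Corollary \ref{Cor:spantreesubInZ_3} and Lemma \ref{lemma3reduce}(ii).
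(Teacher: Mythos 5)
The forward direction (citing Lemma \ref{lemma3reduce}(ii)) is fine, but your argument for the converse --- which is the entire content of the lemma --- has a fatal gap: from the hypothesis $G_1\in\setZTHR$ you extract only that $G_1$ is $2$-edge-connected, and the rest of the argument never uses anything more. If it were valid it would prove the false statement ``$G_1$ $2$-edge-connected $\Rightarrow G\in\setZTHR$.'' Concretely, take $G_1=K_4$ and let $G$ be a bull-growing of $K_4$ that has a spanning triangle-tree (delete an edge $ab$ of $K_4$, add the path $a\,u\,v\,b$ and join $u,v$ to a vertex $w$ of the $K_4$); then $G\notin\setZTHR$ by Lemma \ref{lemma3reduce}(ii), yet your construction of the $a$--$b$ path $Q$, the cycle $C$, and the graph $C+uw+vw$ goes through verbatim. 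The technical step that breaks is the contraction: Lemma \ref{completefamily}(iii) lets you contract a subgraph $H$ and pull $\setZTHR$-membership back only when $H$ itself lies in $\setZTHR$, while Lemma \ref{completefamily}(ii) goes in the opposite direction; a path or a cycle is not in $\setZTHR$, so ``contracting the internal vertices of $Q$'' is not a legal move (the paper only ever contracts $2$-cycles, i.e.\ copies of $2K_2$, or previously certified $\Z_3$-connected subgraphs). Indeed $C+uw+vw$ is a theta graph, with $|E|=|V|+1$; once $Q$ is nontrivial, neither it nor any of its $2$-edge-connected subgraphs (which are just cycles and the theta graph itself) is $\Z_3$-connected, so no nontrivial $\setZTHR$-subgraph is produced and Corollary \ref{Cor:spantreesubInZ_3} never applies.

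The paper's proof is of a different and essentially unavoidable nature: it verifies the definition of $\setZTHR$ boundary by boundary. Given a $\Z_3$-boundary $\beta$ of $G$, it splits into cases on $\beta(u),\beta(v)$. If $\beta(u)\neq 0$ (symmetrically $\beta(v)\neq 0$), it lifts $uw,uv$ to a new edge $vw$ and deletes $u$ and $ua$, absorbing $\beta(u)$ into the boundary at $a$; the resulting graph has parallel edges $vw$ together with a spanning triangle-tree, hence lies in $\setZTHR$ by Corollary \ref{Cor:spantreesubInZ_3}, and the orientation is pulled back by reorienting $ua$ to realize $\beta(u)$. Only in the remaining case $\beta(u)=\beta(v)=0$ is the hypothesis $G_1\in\setZTHR$ invoked: an orientation of $G_1$ realizing the restricted boundary is taken, and the arc $ab$ is replaced by the directed path $a\,u\,v\,b$ with $uw,vw$ oriented so that $u$ is all-in and $v$ is all-out. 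Any correct proof must use the full strength of $G_1\in\setZTHR$ at least in that last case; an argument that only sees $2$-edge-connectivity of $G_1$ cannot succeed.
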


\begin{proof}
We still adopt the same notation as above and let $G_1=G-u-v+ab$. Since $G$ has a spanning triangle-tree $\T$, at least one of the edges of $\T$ must be in $\{aw,bw\}$, say $bw\in E(\T)$. We will show below that $G_1\in \setZTHR$ implies $G\in \setZTHR$.

Let $\beta:V(G)\rightarrow \Z_3$ be a $\Z_3$-boundary of $G$. If $\beta(u)\neq 0$, we lift $uw,uv$ to become a new edge $vw$, and then delete the vertex $u$ and the edge $ua$. Let $H$ be the resulting graph with corresponding boundary $\beta_1$, where $\beta_1(a)=\beta(a)+\beta(u)$ and $\beta_1(z)=\beta(z),\forall z\in V(G)\setminus\{u,a\}$. Then $H$ contains a $\Z_3$-connected subgraph $2K_2$ which consists of two parallel edges $vw$. By Corollary \ref{Cor:spantreesubInZ_3}, we have $H\in\setZTHR$, and so $H$ has an orientation $D_1$ satisfying boundary $\beta_1$.  We orient $ua$ to satisfy $\beta(u)$ and add $vu,uw$  back with their orientations kept as the lifted edge $vw$ of $D_1$. Specifically, we orient $ua$ from $u$ to $a$ if $\beta(u)=1$, and orient it from $a$ to $u$ if $\beta(u)=-1$.  This provides an orientation of $G$ satisfying boundary $\beta$.

If $\beta(v)\neq 0$, a similar argument applies. We lift $vb,vw$ to become a new edge $bw$ and delete the vertex $v$ and edge $uv$. Let $H$ be the resulting graph with corresponding boundary $\beta_1$ defined similarly. Then $H-u$ contains a triangle-tree with parallel edges $bw$, and so $H-u\in\setZTHR$ by Corollary \ref{Cor:spantreesubInZ_3}. By Lemma \ref{completefamily}(iii), $H\in\setZTHR$. Then we shall obtain an orientation of $G$ satisfying boundary $\beta$ similar as in the case $\beta(u)\neq 0$ above.

If $\beta(u)=\beta(v)=0$, we define a $\Z_3$-boundary $\beta_1$ of $G_1$ as $\beta_1(x)=\beta(x)$ for any $x\in V(G)\setminus\{u,v\}$. Since $G_1\in \setZTHR$, there is an orientation $D_1$ of $G_1$ satisfying $\beta_1$, where we may assume that the edge $ab$ is oriented from $a$ to $b$ (the other case is similar).
Then, in $G$ we keep the orientation of $E(G_1)-ab$ as in $D_1$, and orient the rest of edges as all ingoing to $u$ and outgoing to $v$. This gives an orientation of $G$ satisfying boundary $\beta$ as well. Therefore, $G$ is $\Z_3$-connected by definition.
\end{proof}

Note that in the bull-reduction operation, the condition that $G$ has a spanning triangle-tree $\T$  cannot ensure that $G_1$ contains a spanning triangle-tree. But if $u$ or $v$ is a leaf of $\T$, then the bull-reduction results in $G_1$ containing a spanning triangle-tree. In the proof below, we shall always apply this operation for leaves of spanning triangle-trees implicitly.

\begin{lemma}\cite{triconZ_3}
Let $G=H_1\bigoplus_2 H_2$. If $H_1\notin\setZTHR$ and $H_2\notin\setZTHR$, then $G\notin\setZTHR$.
\label{LEM: 2-sum}
\end{lemma}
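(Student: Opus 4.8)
The plan is to argue by contradiction: suppose $G = H_1 \bigoplus_2 H_2 \in \setZTHR$ while $H_1 \notin \setZTHR$ and $H_2 \notin \setZTHR$. Let $e$ be the common edge with endpoints $\{x, y\} = V(H_1) \cap V(H_2)$. First I would use the hypothesis $H_2 \notin \setZTHR$ to produce a ``bad'' $\Z_3$-boundary $\beta_2$ of $H_2$, i.e.\ one for which no orientation of $H_2$ realizes $\beta_2$; by composing with a suitable boundary on $H_1$ I want to manufacture a $\Z_3$-boundary $\beta$ of $G$ that cannot be realized, contradicting $G \in \setZTHR$. The technical device is to transfer the obstruction across the cut $\{x,y\}$: since $e = xy$ lies in both $H_1$ and $H_2$, any orientation $D$ of $G$ restricts to orientations $D_i$ of $H_i$ sharing the edge $e$, and the boundary of $D_1$ at $x$ (resp.\ $y$) differs from $\beta|_{H_1}(x)$ exactly according to how $e$ is oriented, with the complementary statement holding for $D_2$.

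The key steps, in order. (1) Pick a $\Z_3$-boundary $\beta_1$ of $H_1$ not realizable by any orientation of $H_1$, and similarly $\beta_2$ for $H_2$; such boundaries exist precisely because $H_1, H_2 \notin \setZTHR$. (2) I would like to combine them into a boundary $\beta$ of $G$ by setting $\beta(v) = \beta_i(v)$ for $v$ in the interior of $H_i$, and defining $\beta(x), \beta(y)$ so that the total sum is $0 \pmod 3$ and so that, for \emph{every} choice of orientation of the shared edge $e$, the induced boundaries on $H_1 - e$ and $H_2 - e$ cannot simultaneously be completed. Concretely, an orientation of $G$ induces an orientation of $H_1$ in which $e$ appears; deleting nothing, $D|_{H_1}$ is an orientation of $H_1$ with some boundary $\beta_1'$, and $D|_{H_2}$ has boundary $\beta_2'$, where $\beta_1' + \beta_2'$ agrees with $\beta$ away from $\{x,y\}$ and, at $x$ and $y$, the discrepancy is governed by a single shared bit (the direction of $e$). (3) Since $\beta_1$ is already not realizable on $H_1$, I would exploit the freedom in choosing $\beta(x), \beta(y)$ subject to one linear relation (sum zero) to guarantee that at least one of the two resulting boundaries $\beta_1'$, $\beta_2'$ is forced to be a non-realizable boundary of its side, no matter which way $e$ points; this reduces to a small finite case check over the two orientations of $e$ and the structure of $\Z_3$. (4) Conclude that no orientation of $G$ realizes $\beta$, contradicting $G \in \setZTHR$.

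I expect the main obstacle to be step (3): making the choice of $\beta$ on the cut vertices so that the obstruction genuinely propagates for \emph{both} orientations of $e$. A cleaner route, which I would pursue first, is the standard contraction argument: if $G \in \setZTHR$ and $H_1 \in \setZTHR$ is false but we still had some realizability on $H_1$ after contracting part of it, we could use Lemma \ref{completefamily}(ii)--(iii); however, since neither side is $\Z_3$-connected, contraction alone does not finish, and one really does need the boundary-transfer bookkeeping across $\{x,y\}$. The essential point making it work is that $|V(H_1) \cap V(H_2)| = 2$ and $|E(H_1) \cap E(H_2)| = 1$, so the interaction between the two sides is a single edge carrying a single $\Z_3$-worth of information; a non-realizable boundary on one side, perturbed only at its two cut vertices by amounts summing to a fixed value, stays non-realizable because realizability is preserved under no such one-parameter family of perturbations once the side is not $\Z_3$-connected. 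I would formalize this by analyzing the set of realizable boundaries of $H_i$ as a coset-like obstruction and checking it is not all of the relevant affine line, which is exactly where the hypothesis $H_i \notin \setZTHR$ is used.
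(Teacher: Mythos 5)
The paper does not actually prove Lemma \ref{LEM: 2-sum}; it is quoted from \cite{triconZ_3}, so there is no in-paper argument to compare against, and your proposal must stand on its own. It has the right setting (bookkeeping of realizable boundaries across the $2$-cut $\{x,y\}$ and the shared edge $e$), but the decisive step (3) contains a genuine gap, in two respects. First, for a fixed orientation $D$ of $G$ and a fixed direction of $e$, the induced boundaries $\beta_1'=\partial(D|_{H_1})$ and $\beta_2'=\partial(D|_{H_2})$ are \emph{not} determined by $\beta$ at $x$ and $y$: only the sums $\beta_1'(x)+\beta_2'(x)$ and $\beta_1'(y)+\beta_2'(y)$ are pinned down (they equal $\beta$ plus the double-counted contribution of $e$). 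How the non-shared edges at $x$ and $y$ split between ``in'' and ``out'' on each side is a free parameter, so for each direction of $e$ there are three possible splittings $(\beta_1',\beta_2')$, all of which must be excluded; your ``small finite case check over the two orientations of $e$'' does not account for this one-parameter family. Second, the claim you lean on to close the argument --- that a non-realizable boundary of $H_i$ stays non-realizable after perturbing it only at the two cut vertices by amounts summing to zero --- is false. For $H_i=K_3$ on $\{x,y,z\}$ the all-ones boundary is non-realizable, yet adding $(+1,-1,0)$ at $(x,y,z)$ gives the boundary $(2,0,1)$, which is realized by orienting $x\to y$, $x\to z$, $y\to z$. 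So non-realizability does not propagate along the whole affine line, and $H_i\notin\setZTHR$ only guarantees that \emph{some} point of \emph{some} line is missed.

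The way to repair this is asymmetric in the two sides and uses the shared edge essentially. Decompose each orientation of $G$ as an orientation of $H_1$ together with an orientation of $H_2-e$, so that the realizable boundaries of $G$ form the sumset $R(H_1)+R(H_2-e)$. Fixing the interior values on each side and parametrizing the remaining freedom by the value at $x$ turns each side into a subset $S_1,S_2\subseteq\Z_3$. Since $R(H_2)$ is the union of the two distinct translates $R(H_2-e)\pm(\delta_x-\delta_y)$, if every such fiber of $R(H_2-e)$ had at least two elements then every fiber of $R(H_2)$ would be all of $\Z_3$; hence $H_2\notin\setZTHR$ forces $|S_2|\le 1$ for a suitable interior boundary, while $H_1\notin\setZTHR$ gives $|S_1|\le 2$ for a suitable interior boundary. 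Then $|S_1+S_2|\le 2<3$, producing a $\Z_3$-boundary of $G$ that no orientation realizes. Note that with the symmetric bound $|S_1|,|S_2|\le 2$ that your sketch would yield, the sumset can already be all of $\Z_3$, which is exactly why your treatment of the two sides cannot be completed as written.
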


\section{Graphs with Spanning Triangle-trees}

Now we are ready to prove our main results, Theorems \ref{tri-tree_character} and \ref{mainthm3flow}, for graphs containing a spanning triangle-tree.

{\bf Proof of Theorem \ref{tri-tree_character}:}
If $G$ satisfies one of (i), (ii) and (iii), then $G\notin \setZTHR$ by Lemmas \ref{lemma3reduceTRItree} and \ref{LEM: 2-sum}.
Now suppose that $G$ satisfies none of (i),(ii) or (iii). We shall show that $G\in \setZTHR$ by contradiction. Let $G$ be a minimum counterexample of Theorem \ref{tri-tree_character} with respect to $|E(G)|+|V(G)|$. Let $\T$ be a spanning triangle-tree of $G$. It is clear that for any vertex $v\in V(G)$, $d(v)\geq 3$. Otherwise, $G$ satisfies condition (ii).

Suppose $\TP=\TP(u,v)$ is a longest triangle-path among all possible triangle-paths in $G$.
Let $a$, $b$ be the neighbors of $u$ on $\TP$, where $a$ is a vertex with exactly $3$ neighbors in $\TP$.

We first claim that
\begin{equation}\label{CL:A}
E(\T)\cup E(\TP)\neq \emptyset.
\end{equation}

It is clear that $\TP$ contains a cycle. If no edge of $\TP$ is in  $E({\mathcal T})$, then by Lemma \ref{tree+}(iii) we have $\TP+{\mathcal T}\in \setZTHR$, and so $G\in\setZTHR$ by Corollary \ref{Cor:spantreesubInZ_3}. So, there is an edge of $\TP$ in  $E({\mathcal T})$, and (\ref{CL:A}) holds.

Thus, for any vertex $t\in V(G)\setminus V(\TP)$, there is a triangle-path $\TP(t,e)$ from $t$ to some $e\in E(\TP)$ by (\ref{CL:A}). Denote by $\TP(t,e_t)$ the shortest path among all triangle-paths  $\TP(t,e)$ with $e\in E(\TP)$. Note that $e_t\notin \{ua,ub\}$; otherwise, there is a longer triangle-path in $G$. If $t\in V(\TP)$, we also define $e_t=\emptyset$ and $\TP(x,e_t)=\emptyset$ for technical reasons.

Next, we show the following statement:
\begin{equation}\label{CL:B}
\text{$d_G(u)=3$ and $u$ is a leaf of ${\mathcal T}$.}
\end{equation}

Since $G$ does not satisfy (ii), $d_G(u)\neq 2$. Suppose, by contradiction, that $d_G(u)\geq 4$, and $s$, $d$ are two neighbors of $u$ other than $a$, $b$. Let $H=\TP\cup\TP(s,e_s)\cup\TP(d,e_d)$. Then $H$ is a triangle-tree, and moreover, $u$ is a leaf of $H$. Thus,  $H+uc+ud\in\setZTHR$ by Lemma \ref{tree+}(i), and so $G\in\setZTHR$ by Corollary \ref{Cor:spantreesubInZ_3}, which is a contradiction. So,  $d_G(u)=3$ and $u$ is a leaf of ${\mathcal T}$ since $\TP=\TP(u,v)$ is the longest triangle-path in $G$. This proves (\ref{CL:B}).

Let $x$ be the third neighbor of $u$, other than $a,b$, and let ${\mathcal Q}=\TP(x,e_x)$. Then we have $e_x\notin \{ $ab$,$ac$\}$. Otherwise, there is a longer triangle-path of $G$.

Let $G'=G_{[a,bc]}=G-ab-ac+bc$, and let $H$ be a maximum $\setZTHR$-subgraph of $G'$ containing $bc$. Since $bc$ is a 2-cycle, by Lemma \ref{completefamily}(iii) we contract $2$-cycles consecutively to obtain that $G'[V(\TP\cup {\mathcal Q})-a]\in\setZTHR$, and so $$V(\TP\cup {\mathcal Q})-a\subset V(H).$$

If $d_G(a)=3$, then by (\ref{CL:B}) the bull-reduction in (iii) is applied for $G$, and the resulting graph still has a spanning triangle-tree, a contradiction. Hence, $d_G(a)\geq 4$. Now we claim that
\begin{equation}\label{CL:C}
\text{there is a neighbor $y$ of $a$ that is not in $V(H)$.}
\end{equation}
Since $d_G(a)\geq 4$ and $a$ has exactly $3$ neighbors in $\TP$, we may let $y$ be a neighbor of $a$ not in $V(\TP)$. If $y\in V(H)$, then there are at least two neighbors of $a$, namely $u$ and $y$, in $V(H)$. By the maximality of $H$ and Lemma \ref{completefamily}(iii),(iv), we have $y\in V(H)$. Thus by Lemma \ref{completefamily}(iii) again, it follows from $u,y\in V(H)$  that $a\in V(H)$. Now we conclude that $V(\TP\cup {\mathcal Q})\subset V(H).$  Applying Lemma \ref{splitZ_3}(i), we also have $G[V(H)]\in\setZTHR$, and so $G\in\setZTHR$ by Corollary \ref{Cor:spantreesubInZ_3},  a contradiction. This verifies (\ref{CL:C}).

Since $d_G(y)\geq 3$ and by Lemma \ref{completefamily}(iii), at most one neighbor of $y$ is in $V(H)$, and so there is a neighbor $z$ of $y$ not in $V(H)$. This also means that $\TP(z,e_z)$ must intersect $\TP$ at $ab$ or $ac$, w.l.o.g., say $e_z=ac$. Otherwise, we have $z\in V(H)$, and so $y\in V(H)$ by Lemma \ref{completefamily}(iii), a contradiction.

\begin{figure}[!hpbt]
    \centering
    \includegraphics[width=0.4\textwidth,clip]{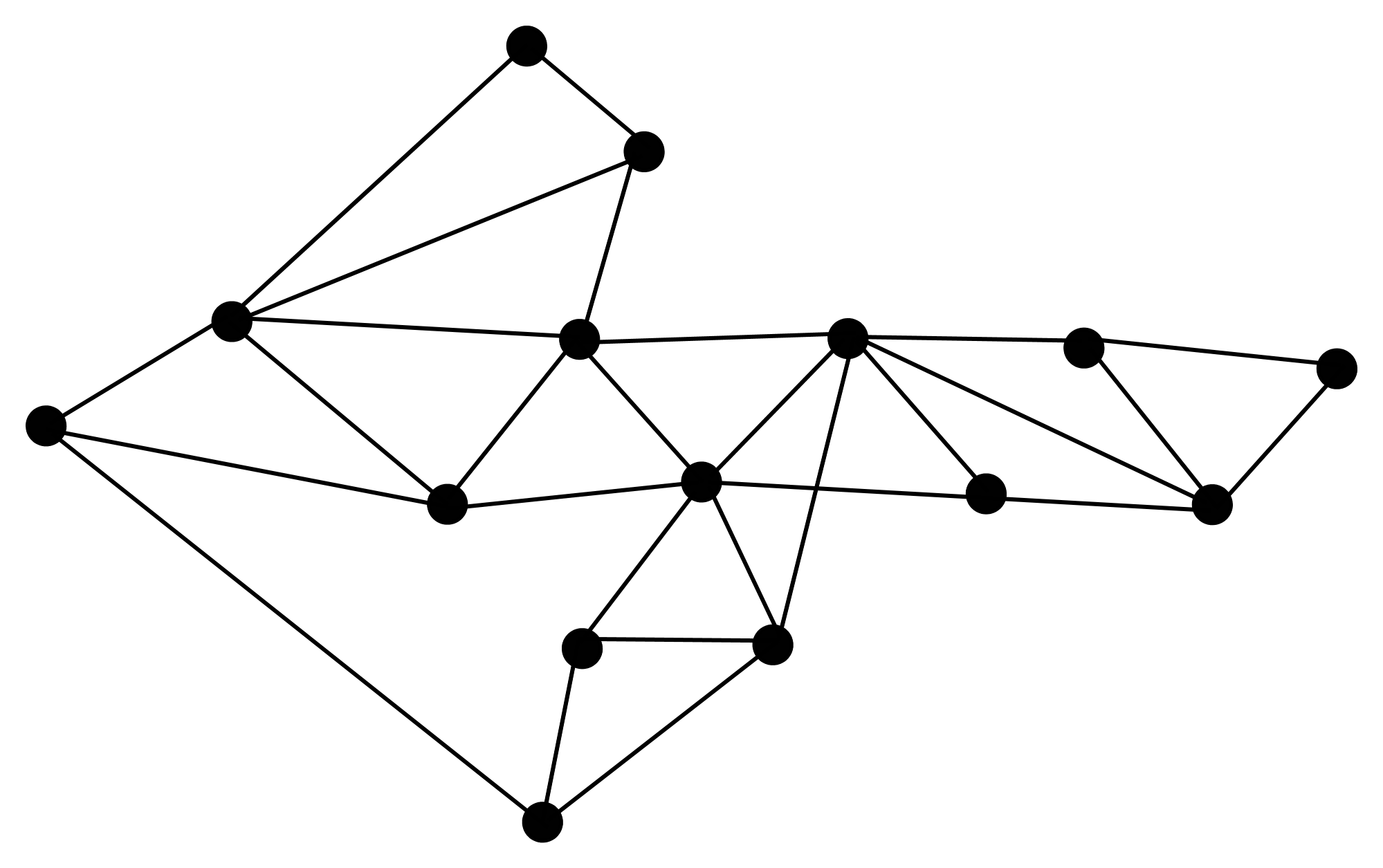}%ldru
    \put(-12.5,3.5){$u$}
    \put(-10.7,5){$a$}
    \put(-8.5,2.5){$b$}
    \put(-6.7,4.8){$c$}
    \put(-6.2,6.3){$z$}
    \put(-6.6,5.4){$\TP(z,e_z)$}
    \put(-8,7.7){$y$}
    \put(0,4){$v$}
    \put(-7.5,-0.3){$x$}
    \put(-6,1){${\mathcal Q}$}
    \put(-3,5){$\TP$}

\caption{\small\it A longer triangle-path.}\label{longerpath}
\end{figure}

{\bf The final step.} If $\TP(z,e_z)$ is a triangle $acz$, see Fig. \ref{longerpath}, then $\TP-u+za+zc+ya+yz$ is a longer triangle-path of $G$, a contradiction. Otherwise, $\TP{(z,e_z)}$ contains at least two triangles, and so $\TP-u+\TP(z,e_z)$ is a triangle-path longer than $\TP$, again a contradiction to the maximality of $\TP$. This finishes the proof.
\qed

{\bf Proof of Theorem \ref{mainthm3flow}:}
If $G$ is formed from $K_4$ by a series of bull-growing operations, then it has no $3$-NZF by Lemma \ref{lemma3reduce}. Conversely, assume that $G$ has no $3$-NZF. Then, $G\notin \setZTHR$. We apply Theorem \ref{tri-tree_character} on $G$.

Suppose $G=K_3\bigoplus_2 G_1$, where $G_1$ contains a spanning triangle-tree $\T$. Let $abc$ correspond to the $K_3$ in the $2$-sum, where $a$ is a $2$-vertex of $G$. Then $G_{[a,bc]}$ contains a $2$-cycle $bc$, which shows $G_{[a,bc]}\in\setZTHR$ by Corollary \ref{Cor:spantreesubInZ_3}, and therefore, has a $3$-NZF. Hence $G$ has a $3$-NZF by Lemma \ref{splitZ_3}(ii), a contradiction.

Now suppose $G={\mathcal B}\biguplus G_1$, where $G_1$ contains a spanning triangle-tree $\T$. By Lemma \ref{lemma3reduce}, $G_1$ has no $3$-NZF if and only if $G$ has no $3$-NZF. This proves Theorem \ref{mainthm3flow}.
\qed

\begin{figure}[!hpbt]
\minipage{0.5\textwidth}
    \centering
    \includegraphics[width=0.85\textwidth,,clip]{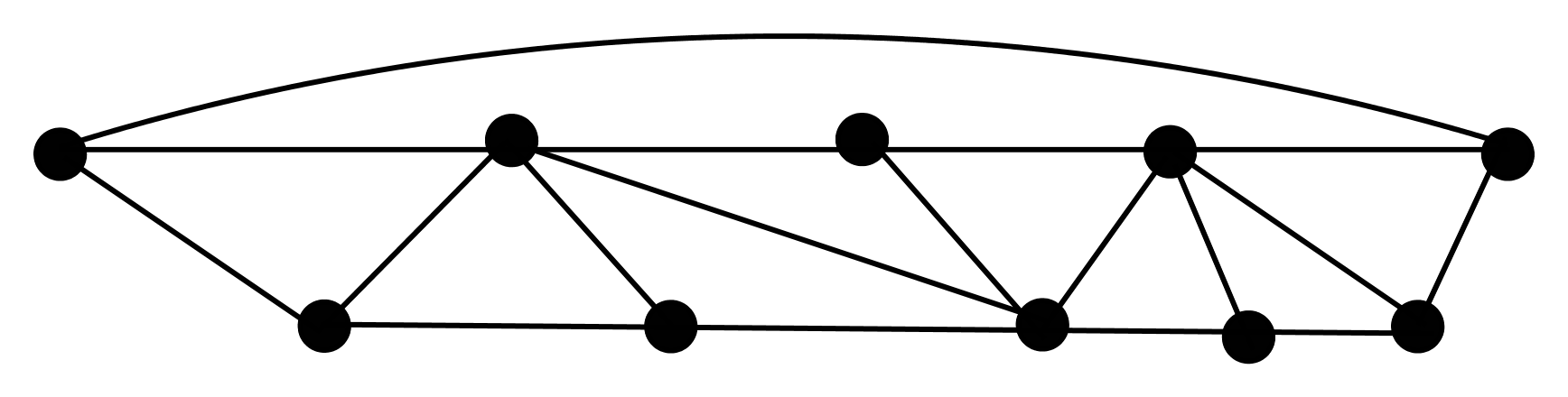}%ldru
    \put(-12.8,1.3){$u$}
    \put(-10.8,-0.1){$x_1$}
    \put(-10.1,1.6){$x_2$}
    \put(-8,-0.1){$x_3$}
    \put(-5.1,-0.1){$x_4$}
    \put(-6.8,1.6){$x_5$}
    \put(-4.4,1.6){$x_6$}
    \put(-3.3,-0.1){$x_7$}
    \put(-1.6,-0.1){$x_8$}
    \put(-0.5,1.3){$v$}
    \put(-6.5,-1.8){$(a)$}
\endminipage\hfill
\minipage{0.5\textwidth}
    \centering
    \includegraphics[width=0.85\textwidth,,clip]{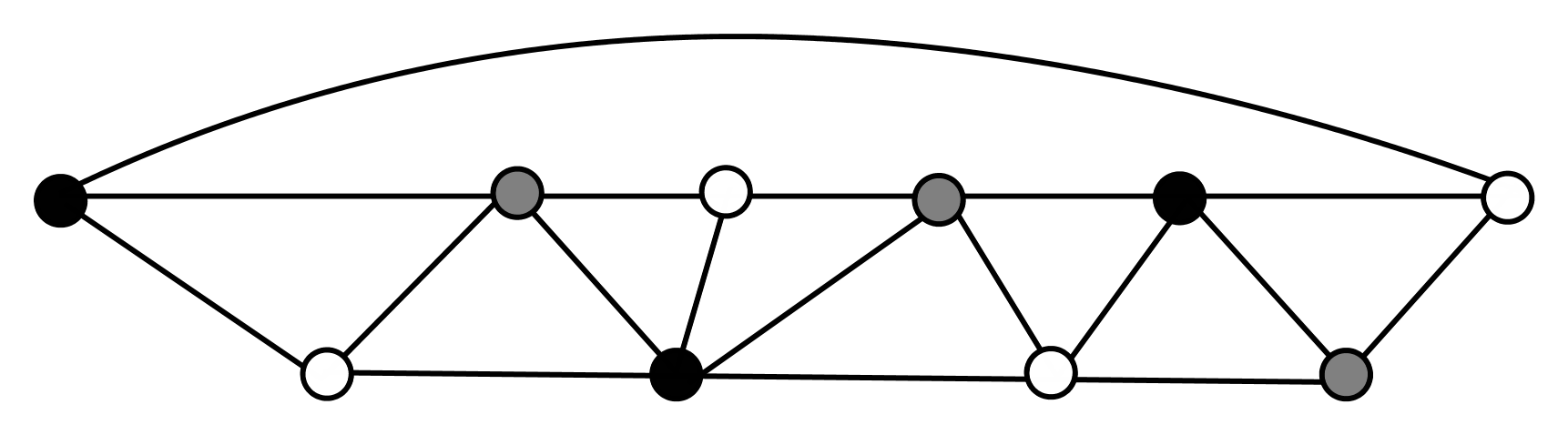}%ldru
    \put(-12.8,1.3){$u$}
    \put(-11,-0.3){$x_1$}
    \put(-9,2.3){$x_2$}
    \put(-8.5,-0.3){$x_3$}
    \put(-7.9,-0.3){$(u_1)$}
    \put(-5,-0.3){$x_6$}
    \put(-7.25,2.3){$x_4$}
    \put(-5.5,2.3){$x_5$}
    \put(-2.2,-0.3){$x_8$}
    \put(-4.3,2.3){$x_7$}
    \put(-3.5,2.3){$(u_2)$}
    \put(-0.6,1.3){$v$}
    \put(-6,-1.8){$(b)$}
\endminipage
    \caption{\it The crystals in Corollary \ref{Cor:crystal}. }
    \label{Fig:3coloring}
\end{figure}

{\bf Proof of Corollary \ref{Cor:crystal}:}
Let ${\mathcal C}=\TP(u,v) +uv$ be a crystal, where the vertices of ${\mathcal C} $ are ordered as $u,x_1,x_2,\cdots, x_k,v$ and $d_{{\mathcal C}}(x_1)=3$. When $|V({\mathcal C})|\leq 5$, ${\mathcal C}$ is a wheel and the statements clearly hold. Now we proceed by induction and assume $|V({\mathcal C})|\geq 6$.

(i) \ By Theorem \ref{mainthm3flow}, ${\mathcal C}$ has no $3$-NZF if and only if it is formed from $K_4$ by a series of bull-growing operations. Since bull-growing operation keeps the parity of degree of each vertex, that ${\mathcal C}$ has no $3$-NZF would imply that each vertex has odd degree. On the other hand, if each vertex of ${\mathcal C}$  is of odd degree, then we have that $d_{{\mathcal C}}(x_1)$, $d_{{\mathcal C}}(x_2)$ and $d_{{\mathcal C}}(x_3)$ are odd (see Fig. \ref{Fig:3coloring}(a)). Thus, $d_{{\mathcal C}}(x_1)=3$ and at least one of $d_{{\mathcal C}}(x_2), d_{{\mathcal C}}(x_3)$ is also $3$. Without loss of generality, we assume $d_{{\mathcal C}}(x_3)=3$. And $x_2$ is a common neighbor of $u$ and $x_1$.
Hence, ${\mathcal C}={\mathcal B}\biguplus(\TP(x_3,v)+x_3v)$. Now $\TP(x_3,v)+x_3v$ is smaller than ${\mathcal C}$ and each vertex of it has odd degree. Thus $\TP(x_3,v)+x_3v$ has no $3$-NZF by induction, and so ${\mathcal C}$ has no $3$-NZF by Lemmas \ref{lemma3reduce} and \ref{LEM: 2-sum}.

(ii) \ Let $\psi:V(\TP(u,v))\rightarrow \{\text{black, white, gray}\}$ be a proper $3$-coloring of $\TP(u,v)$ with $\psi(u)=black$, and let $u_{1}$ be the first vertex of $x_1,x_2,\cdots, x_k,v$ with color black, w.l.o.g., say $d_{{\mathcal C}}(x_1)=3$ and $u_1=x_3$ (see Fig. \ref{Fig:3coloring} (b)). Then $G_1={\mathcal C}-u-x_1+x_3v$ is the bull-reduction of ${\mathcal C}$ and $H=\TP(x_3,v)+x_3v$ is a new crystal. Similar to (i), we have that either $G_1=H$ (in this case ${\mathcal C}={\mathcal B}\biguplus H$), or $G_1$ consists of $2$-sums of $H$ and triangles. By Lemmas \ref{lemma3reduceTRItree} and \ref{LEM: 2-sum}, $G_1\in \setZTHR$ if and only if $H\in \setZTHR$. By induction,  $H=\TP(x_3,v)+x_3v\in \setZTHR$ if and only if $H$ is vertex-$3$-colorable, i.e., $\psi(v)\neq black$. Hence by Lemma \ref{lemma3reduceTRItree}, ${\mathcal C}\in \setZTHR$ if and only if $\psi(v)\neq black$. Thus, (ii) holds, which completes the proof.
\qed

\section{Two Spanning Triangle-trees}
\label{sect:22trees}

An elementary theorem of Robbins \cite{Robbins} (or see Theorem 5.1 in \cite{bondy}) shows that every connected graph without cut edges has a strongly connected orientation. In fact, such a strongly connected orientation can be easily obtained from ear-decompositions. This motivates the following lemma.

\begin{lemma}\label{LEM: G1G2}
If $G$ can be edge-partitioned into two spanning subgraphs $G_1$ and $G_2$ such that $G_1\in\setZTHR$ and $G_2$ is $2$-edge-connected, then $G\in\setCZ$.
\end{lemma}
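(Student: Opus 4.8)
The plan is to verify membership in $\setCZ$ straight from the definition: fix an arbitrary $\Z_3$-boundary $\beta$ of $G$, and construct a strongly connected orientation $D$ of $G$ with $d_D^+(v)-d_D^-(v)\equiv\beta(v)\pmod 3$ for every $v$. The underlying idea is a division of labour, letting $G_2$ supply the strong connectivity and $G_1$ correct the boundary.

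First I would invoke Robbins' theorem \cite{Robbins}: since $G_2$ is $2$-edge-connected it is connected and bridgeless, hence it has a strongly connected orientation $D_2$, which is spanning because $G_2$ is. Let $\tau(v)\equiv d_{D_2}^+(v)-d_{D_2}^-(v)\pmod 3$ be the boundary that $D_2$ induces, so $\sum_v\tau(v)\equiv 0\pmod 3$. Next I set $\beta_1(v):=\beta(v)-\tau(v)$ in $\Z_3$ for every $v\in V(G)=V(G_1)$; then $\sum_v\beta_1(v)\equiv\sum_v\beta(v)-\sum_v\tau(v)\equiv 0\pmod 3$, so $\beta_1$ is a legitimate $\Z_3$-boundary of $G_1$, and since $G_1\in\setZTHR$ there is an orientation $D_1$ of $G_1$ with $d_{D_1}^+(v)-d_{D_1}^-(v)\equiv\beta_1(v)\pmod 3$ for all $v$. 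Finally I let $D$ orient the edges of $G_1$ as in $D_1$ and those of $G_2$ as in $D_2$; this is well defined because $\{E(G_1),E(G_2)\}$ partitions $E(G)$, and at each vertex $d_D^+(v)-d_D^-(v)\equiv\beta_1(v)+\tau(v)\equiv\beta(v)\pmod 3$. Since $D$ contains the spanning strongly connected sub-orientation $D_2$, and adjoining further arcs to a spanning strongly connected digraph cannot destroy strong connectivity, $D$ is strongly connected, and hence $G\in\setCZ$.

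There is essentially no serious obstacle here; the only points needing a line of care are that ``$2$-edge-connected'' is precisely the hypothesis of Robbins' theorem, that the two boundary-sum congruences above really do vanish mod $3$ (which is exactly what makes $\beta_1$ admissible on $G_1$, so that $\Z_3$-connectedness of $G_1$ may be applied), and that superposing arcs on $D_2$ preserves the strong connectivity already in hand. The degenerate case $|V(G)|=1$ is covered by $K_1\in\setCZ$ from Lemma \ref{completefamily}(i).
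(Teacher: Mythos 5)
Your proposal is correct and follows essentially the same route as the paper's own proof: orient $G_2$ strongly via Robbins' theorem, use the $\Z_3$-connectivity of $G_1$ to realize the corrected boundary $\beta-\tau$, and superpose the two orientations. The extra care you take (checking $\sum_v\beta_1(v)\equiv 0\pmod 3$ and noting that adding arcs to a spanning strongly connected digraph preserves strong connectivity) only makes explicit what the paper leaves implicit.
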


\begin{proof}
Let $\beta$ be a $\Z_3$-boundary of $G$. We first give $G_2$ a strongly connected orientation $D_2$ by Robbins' Theorem. Suppose that the boundary of $G_2$ corresponding to $D_2$ is $\beta_2$. Since $G_1\in\setZTHR$, there is a mod 3-orientation $D_1$ of $G_1$ for the $\Z_3$-boundary $\beta-\beta_2$. Since both $G_1$ and $G_2$ are spanning, $D=D_1\cup D_2$ is a strongly mod 3-orientation of $G$ for the boundary $\beta$. That is, for any $v\in V(G)$, $$d_D^+(v)-d_D^-(v)=(d_{D_2}^+(v)-d_{D_2}^-(v))+(d_{D_1}^+(v)-d_{D_1}^-(v))\equiv\beta_2(v)+(\beta(v)-\beta_2(v))\equiv\beta(v)\pmod3.$$ So, $G\in\setCZ$ by definition.
\end{proof}

Our strategy for the proof of Theorem \ref{THM:2tri-trees} is to apply some extreme choice to find a $2$-edge-connected spanning subgraph from one triangle-tree, and then get a $\Z_3$-connected spanning subgraph from another triangle-tree by adding some extra edges.
We will need one more lemmas before proving Theorem \ref{THM:2tri-trees}.

Let $\T$ be a triangle-tree. We say that an edge set $X$ of $E(\T)$ is {\em removable} if $\T-X$ is $2$-edge connected; each edge $e\in X$ is called a removable edge.

\begin{proposition}\label{PROP: removable}
Let $\T$ be a triangle-tree on $n\ge 4$ vertices with $t$ leaves. Then $\T$ contains a removable set of size at least $n-t-1$.
\end{proposition}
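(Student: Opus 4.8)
The plan is to induct on $n$. A leaf $x$ of a triangle-tree $\T$ is a degree-$2$ vertex whose two neighbours are adjacent, hence simplicial, so $\T-x$ is again a triangle-tree, now on $n-1$ vertices; this is what drives the induction. For the base case $n=4$ the only triangle-tree is two triangles sharing an edge $e$, it has $t=2$ leaves, and $\{e\}$ is removable because $\T-e$ is a $4$-cycle, so the asserted bound $n-t-1=1$ holds. For the inductive step ($n\ge 5$) I would fix a leaf $\ell$ of $\T$, delete it, apply the inductive hypothesis to $\T'=\T-\ell$, and lift the resulting removable set back up.

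First I would record the structural fact that \emph{a triangle-tree on at least $4$ vertices has no two adjacent leaves}: if leaves $a,b$ were adjacent then, since every edge of a triangle-tree lies in a triangle (immediate from the definition), the common neighbour $c$ of $a,b$ would be the unique third neighbour of each of $a$ and $b$, so $\{a,b\}$ meets the rest of the graph only through $c$; thus $c$ is a cut vertex, contradicting that triangle-trees are $2$-connected (an easy induction). Consequently, for the fixed leaf $\ell$ its two neighbours $y,z$ are not leaves, $yz\in E(\T)$, and $\ell yz$ is the unique triangle through $\ell$. Passing to $\T'=\T-\ell$, only $y$ and $z$ change degree, so the number $t'$ of leaves of $\T'$ satisfies $t'=t-1+c$, where $c=|\{w\in\{y,z\}:d_\T(w)=3\}|\in\{0,1,2\}$. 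The key observation — and the step I expect to be the main obstacle — is that in fact $c\le 1$: if $d_\T(y)=d_\T(z)=3$, then in $\T'$ both $y$ and $z$ would have degree $2$ and still be adjacent, producing two adjacent leaves in the triangle-tree $\T'$ on $n-1\ge 4$ vertices, which is impossible by the fact just proved.

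Now $\T'$ is a triangle-tree on $n-1\ge 4$ vertices, so by induction it has a removable set $X'$ with $|X'|\ge (n-1)-t'-1=n-t-1-c$. If $c=0$, then $\T-X'$ is obtained from the $2$-edge-connected graph $\T'-X'$ by adding the new vertex $\ell$ joined to the two distinct vertices $y,z$; attaching such an ear of length two preserves $2$-edge-connectivity, so $X'$ is already a removable set of $\T$, of size $\ge n-t-1$. If $c=1$, say $d_\T(y)=3$, then $y$ has degree $2$ in $\T'$, so both edges of $\T'$ at $y$ — in particular $yz$ — must survive in $\T'-X'$, i.e.\ $yz\in E(\T'-X')$ and $yz\notin X'$. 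In this case $\T-(X'\cup\{yz\})$ is precisely the graph obtained from $\T'-X'$ by subdividing the edge $yz$ with $\ell$ (delete $yz$, add $\ell y$ and $\ell z$); subdividing an edge preserves $2$-edge-connectivity, so $X'\cup\{yz\}$ is a removable set of $\T$ of size $|X'|+1\ge n-t-1$. Either way the induction goes through.

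The remaining points are standard and routine: triangle-trees ($2$-trees) on $\ge 3$ vertices are $2$-connected; deleting a degree-$2$ simplicial vertex from a triangle-tree yields a triangle-tree; and attaching an ear to, or subdividing an edge of, a $2$-edge-connected graph keeps it $2$-edge-connected. The only delicate part is the leaf-count bookkeeping — specifically excluding $c=2$ — which is exactly where we need $\T'$ to have at least $4$ vertices, available since $n\ge 5$ in the inductive step.
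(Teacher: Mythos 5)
Your proof is correct and follows essentially the same route as the paper's: induction on $n$ by deleting a leaf (the paper deletes the last-added vertex, which is a leaf) and splitting on whether its two neighbours become leaves of the smaller triangle-tree, adding the edge between them to the removable set when one does. Your write-up is in fact more complete, since the observation that a triangle-tree on at least $4$ vertices has no two adjacent leaves --- needed to exclude the case $c=2$, where the leaf count would otherwise break the induction --- is left implicit in the paper.
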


\begin{proof}
It is easy to check this fact for $|V(\T)|\leq 5$. Assume it holds for $|V(\T)|\leq k-1$. When $|V(\T)|= k$, let $v$ be the new vertex added such that $abv$ forms a new triangle. If neither $a$ nor $b$ is a leaf, then the removable set of $\T$ is the same as $\T-v$. If one of $a,b$ is a leaf, then the edge $ab$ is removable, and so the size of removable set increases. By induction, the proposition holds.
\end{proof}

\begin{theorem}\label{THM:S3twotriangletrees}
For any graph $G$ with $|V(G)|\ge 4$ containing two edge-disjoint spanning triangle-trees, we have $G\in {\mathcal S}_3$.
\end{theorem}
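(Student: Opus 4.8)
The plan is to find inside one spanning triangle-tree a $2$-edge-connected spanning subgraph, and inside the other triangle-tree (augmented by the leftover edges of the first) a $\Z_3$-connected spanning subgraph, so that Lemma \ref{LEM: G1G2} finishes the job. Let $\T_1$ and $\T_2$ be the two edge-disjoint spanning triangle-trees. First I would try to apply Proposition \ref{PROP: removable} to $\T_1$: if $\T_1$ has $t_1$ leaves, it has a removable set $X_1$ of size at least $n-t_1-1$, so $G_2:=\T_1-X_1$ is a $2$-edge-connected spanning subgraph. The remaining edges of $G$ are $E(\T_2)\cup X_1$, and I want this to contain a spanning $\Z_3$-connected subgraph. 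Since $\T_2$ already spans, the natural candidate is $\T_2$ together with some useful subset of the removed edges $X_1$; by Corollary \ref{Cor:spantreesubInZ_3}, $\T_2 + (\text{stuff})\in\setZTHR$ as soon as it contains a nontrivial $\Z_3$-connected subgraph, and by Lemma \ref{tree+} adding two edges of $X_1$ with appropriate endpoints to $\T_2$ (or one edge of $X_1$ whose two endpoints are nonadjacent in $\T_2$, giving a cycle off $\T_2$, via Lemma \ref{tree+}(iii)) already does it — provided such an edge of $X_1$ exists.

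The main obstacle will be exactly this: ensuring that after choosing the removable set $X_1$ from $\T_1$, at least one removed edge (or pair of removed edges, routed appropriately) creates a $\Z_3$-connected configuration when thrown onto $\T_2$. If $\T_1$ has few leaves, Proposition \ref{PROP: removable} hands us a large $X_1$ and there is plenty of room; the genuinely dangerous case is when $\T_1$ has many leaves, i.e. when the removable set guaranteed by the proposition is small or empty (for instance $\T_1$ a ``triangle-star''). Here I would use the symmetry of the two trees: apply an extremal/counting argument to the pair $(\T_1,\T_2)$. The key observation is that any edge $e=xy\in E(\T_1)$ that is \emph{not} removable in $\T_1$ still has $x,y$ adjacent in $\T_1$, but the point is whether $x,y$ are adjacent in $\T_2$; since $\T_1,\T_2$ are edge-disjoint, $xy\notin E(\T_2)$, so $e$ together with $\T_2$ either closes a cycle off $\T_2$ (then Lemma \ref{tree+}(iii) applies and we are done with $G_2=\T_1$ minus nothing, using $\T_1$ itself for the $2$-edge-connected part after first deleting this one cycle-completing safety) or $x,y$ are adjacent in $\T_2$ and $e$ forms a triangle-ish gadget with $\T_2$. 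So in every case at least one of the two trees supplies what the other needs.

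Concretely I would set up the induction on $|V(G)|$, mirroring the reductions already used for Theorem \ref{tri-tree_character}: pick a leaf $v$ of $\T_1$ (or of $\T_2$) of degree as small as possible in $G$; if $d_G(v)=3$ perform a bull-reduction on $G$ — one checks the bull-reduction preserves ``two edge-disjoint spanning triangle-trees'' when $v$ is a leaf of one of them and its partner is a leaf too, and invoke Lemma \ref{splitZ_3}(iv) to lift back into $\mathcal S_3$; if $d_G(v)\ge 4$, split off a suitable pair of edges at a high-degree vertex and use Lemma \ref{splitZ_3}(iii) to descend. The base cases $|V(G)|\le 5$ (small crystals/wheels/$K_4$-type configurations with two edge-disjoint spanning triangle-trees) are checked by hand, noting $K_4$ itself does \emph{not} contain two edge-disjoint spanning triangle-trees so the smallest genuine cases are on $5$ vertices. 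The hard part, again, is the inductive step in the many-leaves regime: I expect to need a careful case analysis showing that a minimum counterexample cannot have all of $\T_1$'s leaves be $3$-vertices of $G$ while simultaneously blocking every $X_1$-edge from helping $\T_2$, which I would resolve by choosing $\T_1,\T_2$ and the leaf $v$ so as to maximize the number of removable edges and then tracing a ``longest triangle-path'' type argument exactly as in the proof of Theorem \ref{tri-tree_character} to derive a contradiction.
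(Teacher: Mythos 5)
Your opening move is exactly the paper's: remove a largest removable set from one triangle-tree to keep a $2$-edge-connected spanning subgraph, throw the removed edges onto the other triangle-tree hoping for a $\Z_3$-connected spanning subgraph, and finish with Lemma \ref{LEM: G1G2}. But the heart of the argument --- why this must succeed for at least one choice --- is where your proposal has genuine gaps. First, a concrete error: you claim that a single edge $e=xy$ of $\T_1$ thrown onto $\T_2$ ``either closes a cycle off $\T_2$'' (so that Lemma \ref{tree+}(iii) applies) or forms a useful gadget. A single edge with $x\neq y$ is not a cycle in $G-\T_2$; Lemma \ref{tree+}(iii) needs a cycle edge-disjoint from $\T_2$, and one chord never supplies that unless it is parallel to an existing edge. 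What a single added edge $uv$ does give you, via Lemma \ref{tree+}(i)/(ii), depends on where $u,v$ sit relative to the triangle-path $\TP(u,v,\T_2)$, and the hard case is precisely when every removed edge $uv$ has $\TP(u,v,\T_2)+uv\cong K_4$ --- adding such an edge to $\T_2$ creates an odd wheel, which is not $\Z_3$-connected, so no single edge helps. Your proposal never confronts this configuration.

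The missing idea is an extremal comparison between the \emph{two} largest removable sets. The paper takes $R_1,R_2$ largest removable in $\T_1,\T_2$ with $|R_1|\ge|R_2|$, and shows that if $\T_2+R_1\notin\setZTHR$ then $R_1$ is a tree, the triangle-paths $\TP(u,v,\T_2)$ over $uv\in R_1$ are pairwise edge-disjoint, each closes to a $K_4$, and $|R_2|=|R_1|$; the contradiction then comes from exhibiting a removable set in $\T_2$ of size $|R_2|+1$ (by adding one more edge of a shortest triangle-path between two of the designated removable edges), violating maximality of $R_2$. None of this appears in your sketch. Your fallback --- induction via bull-reduction and splitting, mirroring Theorem \ref{tri-tree_character} --- is left entirely unexecuted: you acknowledge you would need to check that bull-reduction preserves two edge-disjoint spanning triangle-trees (it generally does not, since deleting two vertices can destroy both trees simultaneously), and the ``many-leaves regime'' you flag as hard is exactly where you offer only an expectation of ``a careful case analysis.'' Finally, note that since the paper allows parallel edges, $|V(G)|=4$ is a genuine base case (two edge-disjoint spanning triangle-trees on four vertices exist with multiplicities), so your claim that the smallest cases live on five vertices is incorrect in this setting.
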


\begin{proof}
Suppose, to the contrary, that $G\notin {\mathcal S}_3$.
Let $\T_1$ and $\T_2$ be two edge-disjoint spanning triangle-trees of $G$. We will move some edges from $\T_1$ to $\T_2$ to obtain a $\Z_3$-connected graph. At the same time, we shall also keep the remaining part of $\T_1$ being $2$-edge-connected. Let $R_i$ be a largest removable set of $\T_i$ for $i=1,2$. We may also view $R_i=G[R_i]$ as an edge-induced subgraph of $G$. Without loss of generality, assume that
 \begin{equation*}%\label{EQ:R1R2}
   |R_1|\ge |R_2|.
 \end{equation*}
Clearly, $\T_1-R_1$ is still $2$-edge-connected by definition.  Ultimately, we will show that
\begin{equation}\label{EQ:inZ3}
\T_2+R_1\in \setZTHR.
\end{equation}
Then it follows from Lemma \ref{LEM: G1G2} that $G\in{\mathcal S}_3$, a contradiction.

\begin{claim}\label{CL: R1tree}
The graph $R_1$  is a tree.
\end{claim}

\begin{proof}
If $R_1$ contains a cycle, then by Lemma \ref{tree+}(iii) we have $\T_2+R_1\in\setZTHR$. Hence, $G\in{\mathcal S}_3$ by Lemma \ref{LEM: G1G2}, a contradiction. Thus $R_1$ is acyclic. Let $L_1$ be the set of leaves in $\T_1$. Clearly, $L_1\cap V(R_1)=\emptyset$ since there is no removable edge incident to a leaf. Thus by Proposition \ref{PROP: removable}, we have $|R_1|\geq |V(G)|-|L_1|-1\geq |V(R_1)|-1$. As $R_1$ is acyclic, we conclude that it is a tree.
\end{proof}

\begin{claim}\label{CL: disPATH}
Let $u,w\in V(R_1)$. For any $v\in V(\TP(u,w,\T_2))\cap V(R_1)$, there is a $uvw$-path in $R_1$.
\end{claim}

\begin{proof}
By contradiction, assume that $v$ is not in the $uw$-path $P_{uw}$ of $R_1$. Since $R_1$ is a tree by Claim \ref{CL: R1tree}, there is a unique shortest path from $v$ to $P_{uw}$ in $R_1$, where the intersection vertex is denoted by $c$. Then we have three paths $P_{uc}$, $P_{vc}$, $P_{wc}$ intersecting at $c$. Note that it is possible that $c=u$ or $c=v$. Since $v\in V(\TP(u,w,\T_2))\cap V(R_1)$, $\TP(u,w,\T_2)$ is divided into two triangle-paths $\TP(u,v,\T_2)$ and $\TP(v,w,\T_2)$.
Moreover, we have either $c\notin V(\TP(u,v,\T_2))$ or $c\notin V(\TP(v,w,\T_2))$, or both. Assume, w.l.o.g., that $c\notin V(\TP(u,v,\T_2))$. We lift the two paths $P_{uc}$, $P_{vc}$ to become two new edges $uc, vc$. Then, $\T_2+uc+vc\in\setZTHR$ by Lemma \ref{tree+} (ii), and so $\T_2+P_{uc}+P_{vc}\in\setZTHR$ by Lemmas \ref{splitZ_3} and \ref{lemSplitPath}. Hence, $\T_2+R_1\in \setZTHR$, i.e., (\ref{EQ:inZ3}) holds, yielding to a contradiction.
\end{proof}

\begin{claim}\label{CL: edgedisjiont}
For any distinct edges $e_1=u_1v_1\in R_1$ and $e_2=u_2v_2\in R_1$, the triangle-paths $\TP(u_1,v_1,\T_2)$ and $\TP(u_2,v_2,\T_2)$ are edge-disjoint.
\end{claim}

\begin{proof} Assume it is not the case. Then $\T^*=\TP(u_1,v_1,\T_2) \cup \TP(u_2,v_2,\T_2)$ is a triangle-tree, which is a sub-triangle-tree of $\T_2$. Since $R_1$ is a tree by Claim \ref{CL: R1tree}, there is a shortest path connecting $e_1$ and $e_2$ in $R_1$. By possibly relabeling the vertices, we may denote this path by $P_{u_1u_2}$ from $u_1$ to $u_2$ in $R_1$. If $u_2\in V(\TP(u_1,v_1,\T_2))$, then by Claim \ref{CL: disPATH} there is a $u_1u_2v_1$-path $P_{u_1u_2v_1}$ in $R_1$. Thus $P_{u_1u_2v_1}+u_1v_1$ is a cycle in $R_1$, a contradiction to Claim \ref{CL: R1tree}. Hence we have $u_2\notin V(\TP(u_1,v_1,\T_2))$, and so $u_2$ is a leaf of $\T^*$. Now lift the path $P_{u_1u_2}$ to become a new edge $u_1u_2$. Then, $\T^*+u_1u_2+v_2u_2\in\setZTHR$ by Lemma \ref{tree+} (i). Thus, $\T+u_1u_2+v_2u_2\in\setZTHR$ and $\T+R_1\in\setZTHR$ by Lemmas \ref{splitZ_3}, \ref{lemSplitPath} and Corollary \ref{Cor:spantreesubInZ_3}. Thus, (\ref{EQ:inZ3}) holds and $G\in{\mathcal S}_3$, a contradiction.
\end{proof}

\begin{claim}\label{CL: R1=R2}
We have $|R_2|=|R_1|$, and for each $uv\in R_1$ the graph $\TP(u,v,\T_2)+uv$ is a $K_4$.
\end{claim}

\begin{proof}
Recall that we already have $|R_1|\ge |R_2|$ by the assumption in the beginning. It remains to show that $|R_2|\ge |R_1|$.  For each edge $e=uv\in R_1$, $\TP(u,v,\T_2)$ is a triangle-path with at least $4$ vertices, and so it contains at least one removable edge, namely the edge in the triangle containing $u$ but not incident to $u$.
Moreover, all those edges are distinct by Claim \ref{CL: edgedisjiont}. Let $R_2'$ be the collection of all such edges. Then, $|R_2'|\ge |R_1|$, and so by the maximality of $R_2$ we have $|R_2|\ge |R_2'|\ge |R_1|$.
Thus, $|R_2|=|R_1|$. Furthermore, if $\TP(u,v,\T_2)$ contains at least $5$ vertices for some $e=uv\in R_1$, then we can easily select two removable edges from it, namely the edge in the triangle containing $u$ but not incident to $u$ and also a similar edge for $v$.
This would result in $|R_2'|> |R_1|$, a contradiction. Hence we conclude that the graph $\TP(u,v,\T_2)+uv$ is exactly a $K_4$ for each $uv\in R_1$.
\end{proof}

\begin{claim}\label{CL: R2R1ge2}
We have $|V(G)|\ge 5$ and $|R_2|=|R_1|\ge 2$.
\end{claim}

\begin{proof}
When $V(G)=4$, it is easy to check that $G\in {\mathcal S_3}$ by Lemma \ref{LEM: G1G2}. Specifically, there are three non-isomorphic  distributions of $\T_1$ and $\T_2$, and $G$ can be edge-partitioned into a spanning $\Z_3$-connected subgraph and a spanning $2$-edge-connected subgraph in each case. An alternate method is to apply lifting techniques of Lemma \ref{splitZ_3} (iii), and see \cite{complement} for more details. Thus we have $|V(G)|\ge 5$.

Now suppose $|R_2|=|R_1|=1$. Then both $\T_1$ and $\T_2$ contain $|V(G)|-2$ leaves by Proposition \ref{PROP: removable}. In fact, this indicates that both $\T_1$ and $\T_2$ are isomorphic to the complete tripartite graph $K_{1,1,|V(G)|-2}$, called triangular-book. As $|V(G)|\ge 5$, there are at least $|V(G)|-4\ge 1$ common leaves for  $\T_1$ and $\T_2$. Let $x$ be a common leaf of $\T_1$ and $\T_2$, and let $xyz$ be the corresponding triangle in $\T_1$. Now consider the graph $G'=G-x+yz$. Then $G'$ contains two edge-disjoint spanning triangle-trees $\T_1'=\T_1-x$ and $\T_2'=\T_2-x$. Moreover, $\T_2'$ is $2$-edge-connected, and $\T_1'+yz\in\setZTHR$ since it contains parallel edges $yz$ and by Corollary \ref{Cor:spantreesubInZ_3}. Thus,  $G'=G-x+yz\in{\mathcal S}_3$ by Lemma \ref{LEM: G1G2}. Hence,  $G\in{\mathcal S}_3$ by Lemma \ref{splitZ_3} (iv), a contradiction.
\end{proof}

\begin{figure}[!hpbt]
    \minipage{0.5\textwidth}
    \centering
    \includegraphics[width=0.45\textwidth,trim=0 0 0 0,clip]{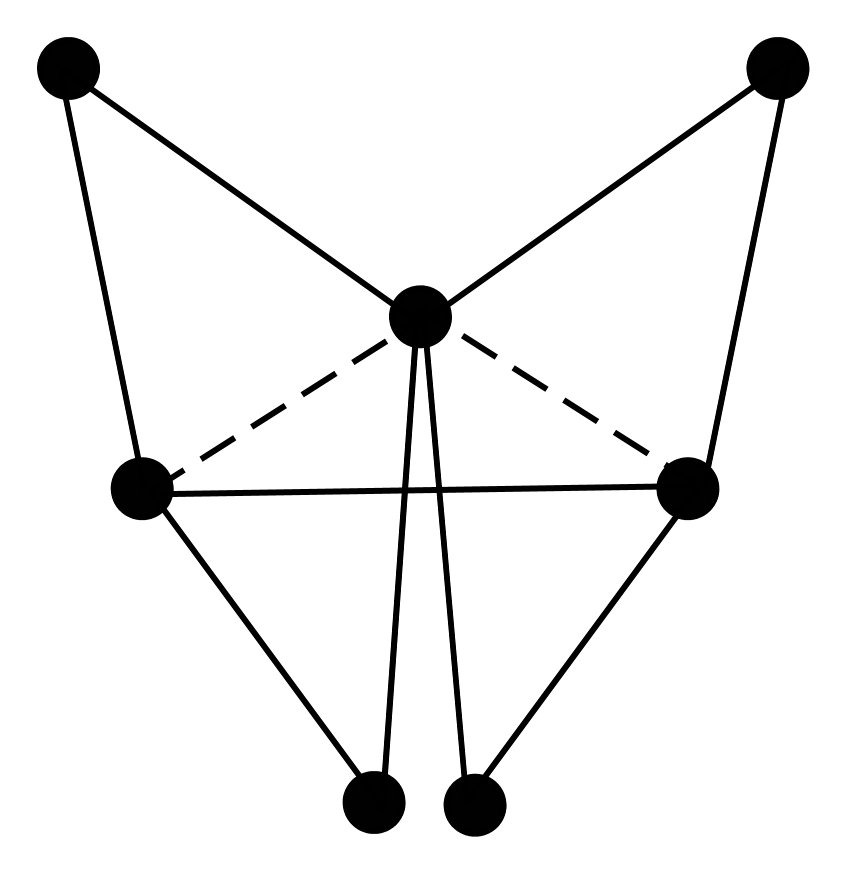}%ldru
    \put(-0.2,6){$u_t$}
    \put(-7.2,6){$u_k$}
    \put(-2.6,0){$v_t$}
    \put(-4.2,0){$v_k$}
    \put(-5.3,3.9){$f_k$}
    \put(-3.6,5){$w$}
    \put(-2.1,3.9){$f_t$}
    \put(-2.5,2.5){$e$}
    \put(-6,2.3){$u$}
    \put(-1.2,2.4){$v$}
    \put(-3.7,-1){$(1)$}
    \endminipage\hfill
    \minipage{0.5\textwidth}
    \centering
    \includegraphics[width=0.7\textwidth,trim=0 0 0 0,clip]{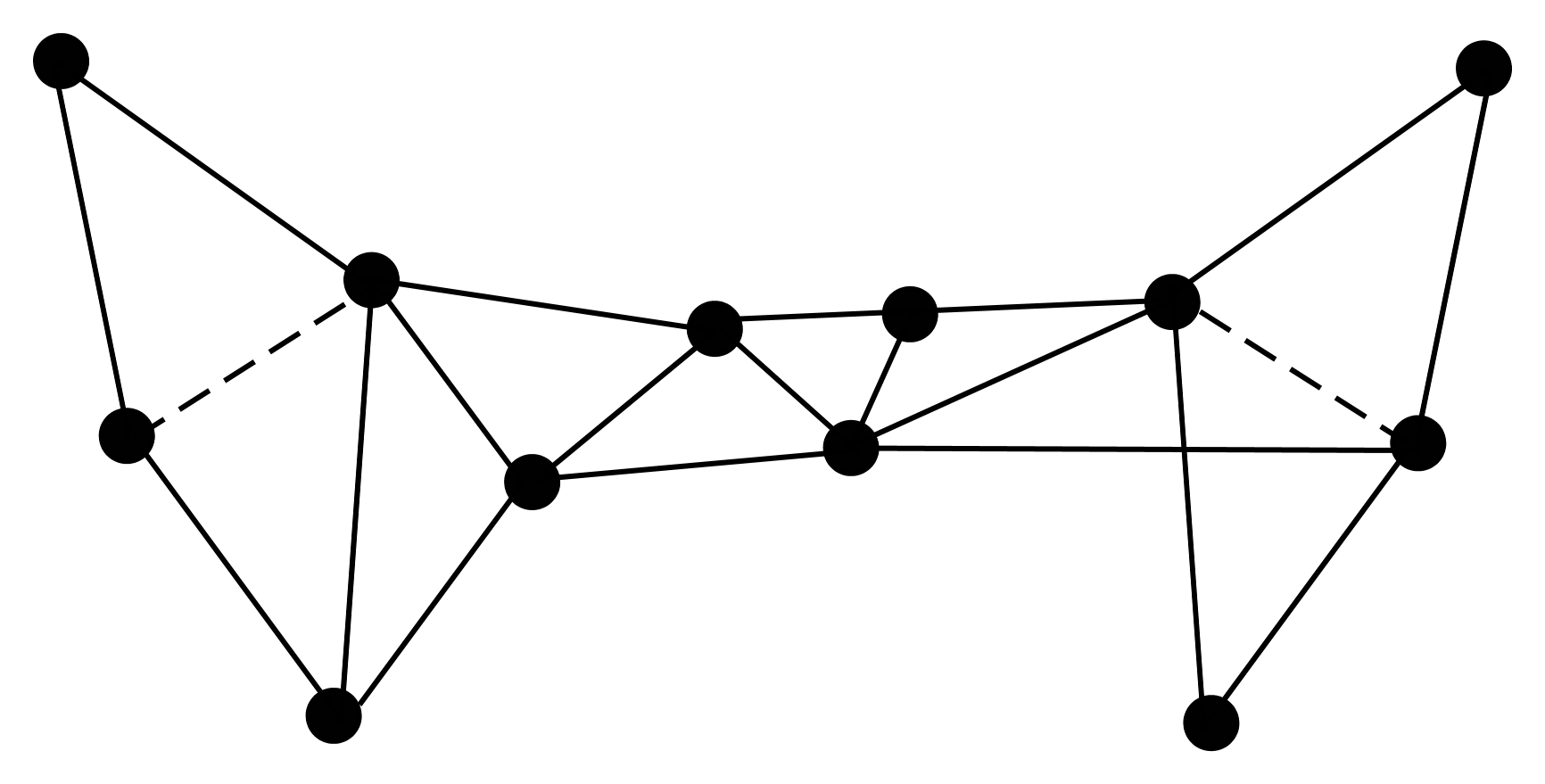}%ldru
    \put(0,5.2){$u_t$}
    \put(-11,5.5){$u_k$}
    \put(-2,0){$v_t$}
    \put(-9,0){$v_k$}
    \put(-9.6,3){$f_k$}
    \put(-1.6,3){$f_t$}
    \put(-5.9,3.5){$u$}
    \put(-7,1.5){$v$}
    \put(-5,-2){$(2)$}
    \endminipage
    \caption{\small The edge $uv$ is removable in the final step in the proof of Theorem \ref{THM:S3twotriangletrees}.}\label{Fig:MoreRemovableEdge}
\end{figure}

{\bf The final step.}
As in the proof of Claim \ref{CL: R1=R2}, let $R_2'$ be the collection of all edges $f$ such that $f=\TP(u,v,\T_2)-u-v$ for some $uv\in R_1$. Denote $R_2'=\{f_1,f_2,\cdots,f_s\}$, where $|R_1|=|R_2|=s$. Choose $\TP(f_k,f_t,\T_2)$ as small as possible among all possible distinct edges $f_k,f_t\in R_2'$.

Assume that $\TP(f_k,f_t,\T_2)$ is a triangle, say $uvw$, where $f_k=uw$ and $f_t=vw$. We further denote the corresponding $K_4$ associated with $f_k$ and $f_t$ by $u_kuv_kw$ and $u_tuv_tw$ (see Fig. \ref{Fig:MoreRemovableEdge}(1)). If $uv\in R_2'$, then $R_2'$ contains a cycle $uvw$, and so $\T_1+R_2'\in\setZTHR$ by Lemma \ref{tree+}(iii). Moreover, $\T_2-R_2'$ is still $2$-edge-connected as each vertex, including $u,v$, is still in a cycle. Thus it follows from Lemma \ref{LEM: G1G2} that $G\in{\mathcal S}_3$, a contradiction. So, we have $uv\notin R_2'$. Now let $R_2''=R_2'\cup\{uv\}$. Then $\T_2-R_2''$ is still $2$-edge-connected by the same reason, and so $R_2''$ is a removable set with size $|R_2''|=|R_2'|+1=s+1>s=|R_2|$, a contradiction to the maximality of $R_2$.

Assume instead that $\TP(f_k,f_t,\T_2)$ contains at least $4$ vertices. Let $C$ be the outer Hamiltonian cycle of $\TP(f_k,f_t,\T_2)$, where $f_k,f_t\in E(C)$. Then $C$ contains a chord $uv$ (see Fig. \ref{Fig:MoreRemovableEdge}(2)). By the minimality of $\TP(f_k,f_t,\T_2)$, we have $uv\notin R_2'$. Otherwise $\TP(f_k,uv,\T_2)$ causes a shorter triangle-path.  Now let $R_2''=R_2'\cup\{uv\}$. Then $\T_2-R_2''$ is still $2$-edge-connected since $u$ and $v$ are still contained in a cycle. Thus $R_2''$ is a removable set, but we have $|R_2''|=|R_2'|+1=s+1>s=|R_2|$, again a contradiction. This completes the proof.
\end{proof}

\section{Remarks on Triangularly-connected Subgraphs}

Recall that the group connectivity version of Theorem \ref{triconn3flow} of Fan et al \cite{triconZ_3} states as follows.

\begin{theorem}\label{Thm:TriConZ_3_2-SUM}
Let $G$ be a triangularly-connected graph with $|V(G)|\geq 3$. Then $G\notin \setZTHR$ if and only if there is a subgraph $G_1$ and an odd wheel or a triangle, called $W$, such that $G = W \bigoplus_2 G_1$, where $G_1\notin\setZTHR$ is triangularly-connected.
\end{theorem}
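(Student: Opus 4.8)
This is the $\Z_3$-group-connectivity refinement of Theorem \ref{triconn3flow}, and I would prove it along the same lines. For the ``if'' direction, recall that a cycle of length at least $3$ is not $\Z_3$-connected -- in any orientation of a cycle the numbers of sources and sinks agree, so a $\Z_3$-boundary equal to $1$ on exactly three vertices is not realizable -- and that an odd wheel $W_{2k+1}$ has no mod $3$-orientation and hence is not $\Z_3$-connected. Since a $2$-sum along an edge that lies in a triangle preserves triangular connectivity, $G=W\bigoplus_2 G_1$ is triangularly connected whenever $G_1$ is, and as $W\notin\setZTHR$ Lemma \ref{LEM: 2-sum} yields $G\notin\setZTHR$.

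For the converse, I would first record the triangularly-connected analogue of Corollary \ref{Cor:spantreesubInZ_3}: \emph{a triangularly-connected graph $G$ with $|V(G)|\ge 3$ that contains a nontrivial $\setZTHR$-subgraph $H$ lies in $\setZTHR$.} This follows by contracting $H$ and then repeatedly contracting $2$-cycles: starting from a triangle meeting the contracted part and following triangle-paths, which reach every edge by triangular connectivity, each new triangle shares an edge with the current part, hence becomes a $2$-cycle; thus $G/H$ collapses to $K_1$ and Lemma \ref{completefamily}(iii) applies. I would also note that the conclusions of Lemma \ref{tree+}(i),(ii) involve only the triangle-tree $\T$ and so are valid for any triangle-tree \emph{subgraph} of $G$; with Lemmas \ref{splitZ_3} and \ref{lemSplitPath} this reproduces inside triangularly-connected graphs the toolkit used for Theorem \ref{tri-tree_character}.

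Now let $G$ be a minimum counterexample with respect to $|V(G)|+|E(G)|$. Following the proof of Theorem \ref{tri-tree_character}, take a longest triangle-path $\TP=\TP(u,v)$ and let $u$ be a leaf of it, lying in an end-triangle $uab$. If $d_G(u)=2$ then $G=K_3\bigoplus_2(G-u)$ with $G-u$ triangularly connected; if moreover $G-u\notin\setZTHR$ this is the claimed decomposition (with $W=uab$), so $G$ is not a counterexample, while if $G-u\in\setZTHR$ a short direct check -- orient $ua,ub$ according to $\beta(u)$ and adjust the $\Z_3$-boundary at $a,b$ -- gives $G\in\setZTHR$, a contradiction; hence this case is impossible and every leaf of a longest triangle-path has degree at least $3$. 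If $d_G(u)\ge 4$, attach to $\TP$ the shortest triangle-paths from the two extra neighbours of $u$, obtaining a triangle-tree $H$ with $u$ a leaf; then $H$ together with the two edges from $u$ to those neighbours is in $\setZTHR$ by Lemma \ref{tree+}(i), so $G$ contains a nontrivial $\setZTHR$-subgraph and lies in $\setZTHR$ by the analogue above -- a contradiction. So $d_G(u)=3$; let $x$ be its third neighbour. As $u$ has no neighbour outside $\{a,b,x\}$, every triangle through $ux$ is $uxa$ or $uxb$, and maximality of $\TP$ forbids prolonging $\TP$ across such a triangle. The plan is to show that this local rigidity, propagated along the end of $\TP$, forces a vertex $h$ that is the hub of an odd wheel $W$ whose rim is a sub-triangle-path of $\TP$ continued through $x$ and $u$, with $G=W\bigoplus_2 G_1$. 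Then $G_1$ is triangularly connected, and either $G_1\in\setZTHR$ -- whence $G\in\setZTHR$ by the analogue of Corollary \ref{Cor:spantreesubInZ_3} -- or $G_1\notin\setZTHR$ -- whence $G_1$, and hence $G$, is of the claimed form by minimality; either way $G$ is not a counterexample, a contradiction that finishes the proof.

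The step I expect to be the main obstacle is precisely this last one: turning the local constraint ``no triangle through $ux$ escapes $\TP$'' into the global fact that an odd-wheel $2$-summand can be peeled off. One must follow, at each degree-$3$ leaf of a longest triangle-path, how the spoke-like edge folds back into the path, and show that the accumulated folding closes a rim cycle of \emph{odd} length around a single hub -- the odd parity being exactly where $G\notin\setZTHR$ is used. This is the combinatorial heart of the original proof of Fan, Lai, Xu, Zhang and Zhou \cite{triconZ_3}. I do not expect a shortcut via Theorem \ref{tri-tree_character}, because a bull-growing of a triangularly-connected graph need not be triangularly connected -- the new edge $ab$ may lie in no triangle of the reduced graph -- so the bull-growing and odd-wheel $2$-sum decompositions do not translate directly into one another.
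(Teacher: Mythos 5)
First, note that the paper does not actually prove Theorem \ref{Thm:TriConZ_3_2-SUM}: it is quoted verbatim as the group-connectivity form of the result of Fan, Lai, Xu, Zhang and Zhou and is imported from \cite{triconZ_3}, so there is no in-paper argument to compare yours against. Judged on its own terms, your proposal has a genuine gap. The ``if'' direction is fine (odd wheels and triangles have no mod $3$-orientation, so $W\notin\setZTHR$, and Lemma \ref{LEM: 2-sum} applies), and your preliminary reductions for the converse are sound: the triangularly-connected analogue of Corollary \ref{Cor:spantreesubInZ_3} does hold by contracting along triangle-paths, the degree-$2$ case correctly peels off a triangle $2$-summand, and Lemma \ref{tree+}(i) correctly eliminates $d_G(u)\ge 4$ for a leaf $u$ of a longest triangle-path. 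But the entire content of the theorem lies in the step you explicitly defer: converting the local constraint ``$d_G(u)=3$ and no triangle through $ux$ extends $\TP$'' into the existence of an odd wheel $W$ with $G=W\bigoplus_2 G_1$. You describe this as ``the plan'' and name it yourself as ``the main obstacle,'' but you give no argument for why the folding of the third neighbour back into $\TP$ must close up into a wheel with a single hub, nor for why the rim must have \emph{odd} length, nor for why the eccentrical edges of that wheel separate it from the rest of $G$ as a $2$-sum. Without that, the converse is not proved; what you have is a correct reduction to the hard case plus an accurate description of what the answer should look like.

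Two smaller points. Your parenthetical derivation of $G\in\setZTHR$ when $d_G(u)=2$ and $G-u\in\setZTHR$ is correct but should be written out: one orients both edges at $u$ into or out of $u$ to realize $\beta(u)\in\{0,\pm1\}$ (using $\pm2\equiv\mp1\pmod 3$) and pushes the correction onto the boundary of $G-u$; this is worth a line since a $2$-vertex is exactly the situation where $\Z_3$-connectedness usually fails. And your closing remark that Theorem \ref{tri-tree_character} gives no shortcut is well taken and consistent with how the paper is organized: the authors prove the spanning-triangle-tree results independently and only \emph{cite} the triangularly-connected classification, deriving from it Proposition \ref{thmwheelstar} rather than the other way around. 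If you want a complete proof of Theorem \ref{Thm:TriConZ_3_2-SUM}, the odd-wheel extraction argument of \cite{triconZ_3} has to be reproduced; it cannot be waved at.
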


From this theorem, we can easily characterize triangularly-connected graphs without spanning triangle-trees under $\Z_3$-connectivity.
An {\em eccentrical edge} of a wheel is an edge that is not incident with the center vertex. A wheel in a graph $G$ is {\em fully $2$-summed} if for each eccentrical edge $e$, there exist subgraphs $A,B$ of $G$ such that $G=A\bigoplus_2 B$ and $E(A)\cap E(B)=\{e\}$ (see Fig. \ref{wheelstar} below).

\begin{figure}[!hpbt]
    \centering
    \includegraphics[width=0.5\textwidth,trim=0 0 0 0,clip]{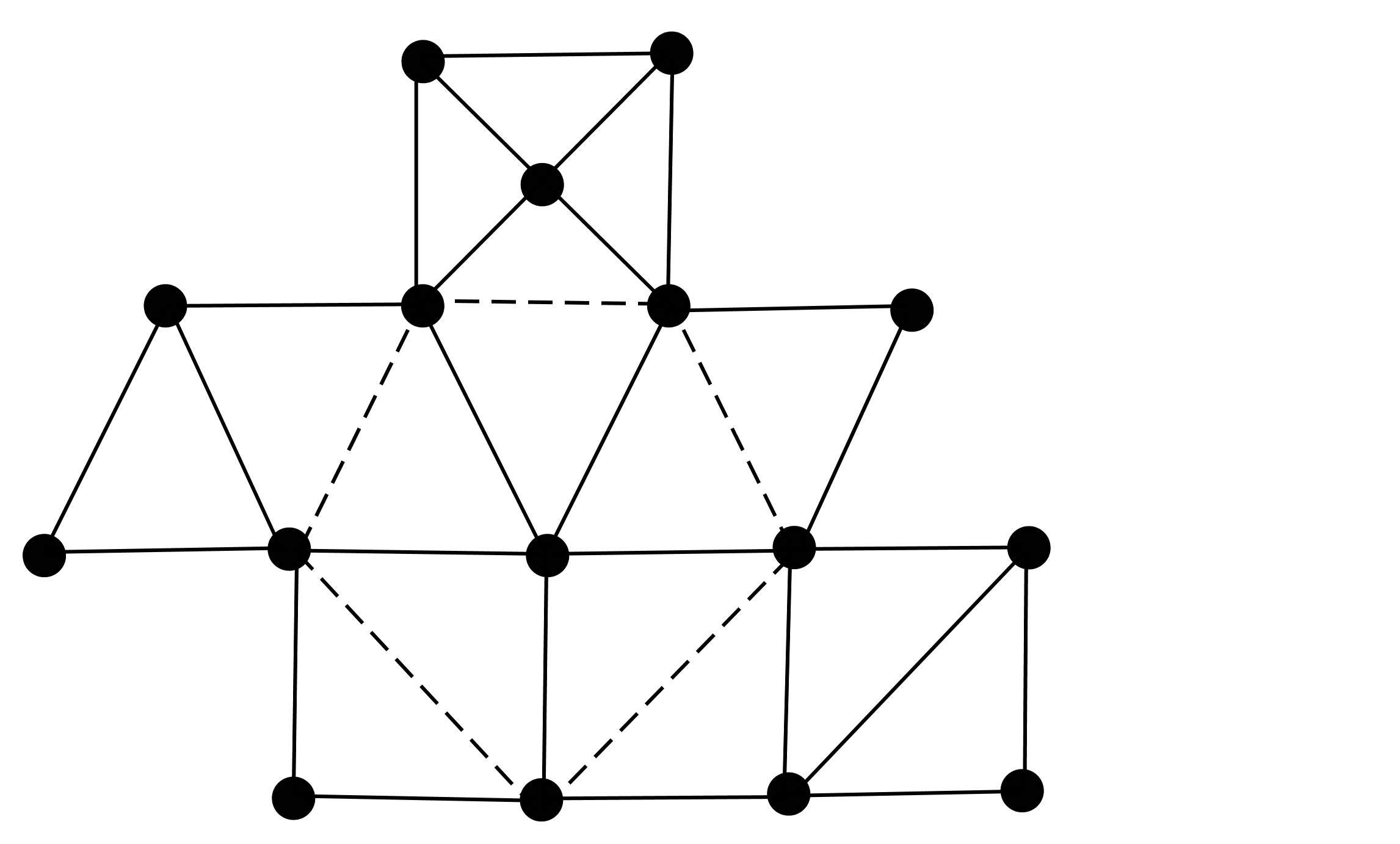}%ldru
    \caption{\small\it A wheel that is fully 2-summed.}\label{wheelstar}
\end{figure}

\begin{proposition}\label{thmwheelstar}
Let $G\notin \setZTHR$ be a triangularly-connected graph. Then $G$ has no spanning triangle-tree if and only if there is an odd wheel of $G$ that is fully $2$-summed.
\end{proposition}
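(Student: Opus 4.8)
The plan is to build everything on the iterated form of Theorem~\ref{Thm:TriConZ_3_2-SUM}. Applying it repeatedly, $G$ is assembled from a family of \emph{bricks}---each one an odd wheel $W_k$ or a triangle---glued in a tree pattern: any two ``adjacent'' bricks share a single edge (a \emph{$2$-sum edge}), and along each $2$-sum edge $e=xy$ the set $\{x,y\}$ is a $2$-cut of $G$ whose two sides are (unions of) bricks. I would first isolate the routine structural facts of this brick-tree: (i) distinct bricks meet in at most two vertices, and every $3$-connected subgraph of $G$ lies inside a single brick (a $3$-connected subgraph cannot cross a $2$-cut), so the induced subgraph of $G$ on the vertex set of a brick is exactly that brick; (ii) the only $3$-connected subgraph of $W_k$ on $\ge4$ vertices is $W_k$ itself, and $K_4=W_3$ has no nondegenerate $2$-sum, so a \emph{fully $2$-summed} odd wheel $W$ of $G$ is---after identifying it with a brick---a wheel-brick $W_k$ with $k\ge5$ \emph{every rim edge of which is a $2$-sum edge} (each rim edge carrying a further brick); and (iii) consequently the rim of such a $W_k$ is an \emph{induced} cycle of $G$, since a chord $v_iv_j$ would make $W_k+v_iv_j$ a $3$-connected subgraph of $G$ not contained in any single brick. (The borderline case $W_3=K_4$ is harmless: deleting any one of its edges leaves a triangle-path, so a fully $2$-summed $K_4$ never obstructs; I read the proposition, as its figure suggests, with $k\ge 5$.) I would also record the standard properties of triangle-trees seen as $2$-trees: they are chordal, they are $2$-connected on $\ge3$ vertices, every subgraph on $p$ vertices has at most $2p-3$ edges, and---chordal graphs having clique minimal separators---every minimal $2$-vertex separator of a triangle-tree is one of its edges.

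For the ``if'' direction, let $W=W_k$ ($k\ge5$) be a fully $2$-summed odd wheel, with rim $v_1v_2\cdots v_kv_1$; by (ii) every rim edge $e_i=v_iv_{i+1}$ is a $2$-sum edge. Suppose $G$ had a spanning triangle-tree $\mathcal{S}$. For each $i$, $\{v_i,v_{i+1}\}$ is a $2$-cut of $G$ with both sides nonempty, hence a $2$-cut of the spanning subgraph $\mathcal{S}$; as $\mathcal{S}$ is $2$-connected this is a minimal separator, so $e_i\in E(\mathcal{S})$. Thus $\mathcal{S}$ contains the whole rim cycle $C=v_1v_2\cdots v_kv_1$, and since $C$ is induced in $G$ by (iii) and $\mathcal{S}\subseteq G$, the graph $\mathcal{S}$ contains an induced cycle of length $k\ge5$---contradicting chordality of $\mathcal{S}$. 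Hence $G$ has no spanning triangle-tree.

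For the ``only if'' direction I would prove the contrapositive: if no odd wheel of $G$ is fully $2$-summed, then by (ii) every wheel-brick has a rim edge carrying no further brick, and I construct a spanning triangle-tree by recursion on the brick-tree, rooted at an arbitrary brick. To a triangle brick I assign the whole triangle; to a wheel-brick $W_k$ I assign $W_k-e_0$, where $e_0$ is a rim edge carrying no further brick---this is a triangle-path spanning the vertex set of the brick, and it contains every edge of the brick along which a child brick is attached ($e_0$ carries none, and every other edge of $W_k$ survives). Then, handling the children of a brick one at a time, a child $\mathcal{B}'$ attached along an edge $f$ has $f$ already present in the partially built triangle-tree; recursively $\mathcal{B}'$ supplies a triangle-tree spanning $V(\mathcal{B}')$ that contains $f$ (when $\mathcal{B}'$ is a wheel, its deleted rim edge is chosen different from $f$, which is possible since $f$ carries the parent brick and so is not free); and two triangle-trees meeting in exactly the edge $f$ and its two endpoints (otherwise vertex-disjoint) glue to a triangle-tree---re-order the child's construction to begin from a triangle through $f$. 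Iterating down the brick-tree yields a spanning triangle-tree of $G$.

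I expect the main obstacle to be the framework rather than either direction: pinning down the iterated $2$-sum as a well-defined brick-tree and verifying facts (i)--(iii), in particular that a fully $2$-summed odd wheel really is a wheel-brick all of whose rim edges are $2$-sum edges, with induced rim. Once that is in place, the ``if'' direction is the one-line chordality obstruction and the ``only if'' direction is the brick-tree recursion, whose only delicate point is that the rim edge one deletes from a wheel-brick can always be taken distinct from the edge to its parent.
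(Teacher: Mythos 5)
Your route is genuinely different from the paper's. The paper does not set up a global brick decomposition at all: it takes a maximum triangle-tree $\T$ of $G$, locates a triangle $xyz$ with $x,y\in V(\T)$ and $z\notin V(\T)$, shows that both $xt$ and $yt$ lie in $E(\T)$ for some triangle $xyt$, observes that the triangle-path in $\T$ from $xt$ to $yt$ is a fan whose completion by the edge $xy$ is a wheel, and argues that every eccentrical edge of that wheel must lie in a $2$-sum, since otherwise $\T$ could be enlarged or $G\in\setZTHR$. Your ``if'' direction (minimal separators of a chordal spanning subgraph are cliques, hence the rim lies in any spanning triangle-tree, contradicting chordality) is clean and in fact more detailed than the paper's one-line version; note that you could even skip fact (iii), since a chord of the odd rim inside $\mathcal{S}\subseteq G$ would create an even sub-wheel and force $G\in\setZTHR$.

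There is, however, a genuine gap, and it sits exactly where you waved the issue away: the case $W_3=K_4$. Take $G$ to be $K_4$ with a triangle $2$-summed onto \emph{each of its six edges}. This $G$ is triangularly-connected, lies outside $\setZTHR$ by Lemma~\ref{LEM: 2-sum}, and contains no wheel $W_k$ with $k\ge 5$ whatsoever; yet it has no spanning triangle-tree. Indeed, each of the six added vertices has degree $2$, so a spanning triangle-tree would have to contain both of its edges and also the underlying $K_4$-edge (the neighbourhood of a $2$-vertex in a triangle-tree induces an edge), totalling $12+6=18$ edges, while a triangle-tree on $10$ vertices has only $17$. So under your reading (``fully $2$-summed'' counts only for $k\ge5$) the ``only if'' direction is simply false, and your recursion breaks at precisely this brick: the $K_4$ has no free edge to delete. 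The parenthetical claim that ``a fully $2$-summed $K_4$ never obstructs because $K_4$ minus an edge is a triangle-path'' ignores that the deleted edge may be needed to carry a child brick. The statement is admittedly delicate here even as printed --- a $K_4$ whose three rim edges, but not its spokes, carry triangles \emph{does} admit a spanning triangle-tree, so the literal ``if'' direction also wobbles at $K_4$ --- but the repair is to treat every edge of a $K_4$-brick as eccentrical (the centre of $K_4$ is not determined), not to discard $K_4$; with that convention your recursion goes through by deleting any free edge of a $K_4$-brick. Separately, the brick-tree facts (i)--(iii) that you label routine (well-definedness of the decomposition, $3$-connected subgraphs living in one brick, induced rims) do carry real content and would need to be written out; the paper's local extremal argument avoids them entirely.
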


\begin{proof}  The ``if" part is trivial, since each eccentrical edge of the fully $2$-summed odd wheel must be in the spanning triangle-tree, which leads to a contradiction. It remains to justify the ``only if" part.

Suppose, to the contrary, that $\T$ is a maximum triangle-tree of $G$, where $|V(\T)|<|V(G)|$.
Then there exists a pair of incident edges $e_1$,$e_2$ with $e_1\in E(\T)$, $e_2\notin E(\T)$, where $e_1$ and $e_2$ are intersecting at $v\in V(\T)$.
Since $G$ is triangularly-connected, there is a triangle-path $\TP$ from $e_1$ to $e_2$.
So, there must be a triangle with $2$ vertices in $V(\T)$, named $x$, $y$, and one vertex in $V(G)-V(\T)$, named $z$.
If $xy\in E(\T)$, then $\T+xz+yz$ is a larger triangle-tree, a contradiction. So, we have $xy\notin E(\T)$ and there is a triangle $xyt$ on $\TP$ with $t\in V(T)$.
If there is at most one edge of $xt, yt$ in $E(\T)$, say possibly $yt$. Then by Lemma \ref{splitZ_3} (i), $T+xy+xt\in \setZTHR$. Thus,  $G\in\setZTHR$ by Lemma \ref{completefamily} (iii). So, assume instead that both $xt,yt$ are in $E(\T)$. Since $\T$ is a triangle-tree, there is a triangle-path ${\mathcal Q}$ from $xt$ to $yt$. Moreover, ${\mathcal Q}$ is a fan, a wheel with one eccentrical edge deleted.
If there is an eccentrical edge $f$ not in any $2$-sum in $G-{\mathcal Q}$, then $\T-f+xy+xz+yz$ is a larger triangle-tree of $G$, a contradiction. So, $G$ has a fully $2$-summed wheel. The proof is thus complete.
\end{proof}

From Theorem \ref{Thm:TriConZ_3_2-SUM} and Proposition \ref{thmwheelstar}, non-$\Z_3$-connected triangularly-connected graphs  almost have the same structure as graphs containing spanning triangle-trees. Thus all the main results concerning spanning triangle-trees in this paper can be easily transferred to graphs containing spanning triangularly-connected subgraphs, with essentially the same proof. For example, we have the following more general theorem.

\begin{theorem}
Let $G$ be a graph containing a spanning triangularly-connected subgraph.

(a) \ $G$ has no $3$-NZF if and only if $G = {\mathcal B} \biguplus G_1$, where $G_1$ contains a spanning triangularly-connected subgraph and has no $3$-NZF. In other words, $G$ has no $3$-NZF if and only if $G$ is formed from $K_4$ by a series of bull-growing operations.

(b) \ $G\notin\langle \Z_3\rangle$ if and only if $G$ can be constructed from $K_3$ or $K_4$ by $2$-sum and bull-growing operations.
\end{theorem}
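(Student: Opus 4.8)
The plan is to run, essentially verbatim, the proofs of Theorems~\ref{mainthm3flow} and~\ref{tri-tree_character} with ``spanning triangle-tree'' everywhere replaced by ``spanning triangularly-connected subgraph'', proving (b) first and deducing (a) from it (no $3$-NZF implies $\notin\setZTHR$), and inducting on $|V(G)|+|E(G)|$. The substitution is legitimate for two reasons. First, a triangle-tree is itself triangularly connected (take shortest paths in its tree of building triangles), so every sub-triangle-tree of a spanning triangularly-connected subgraph $H$ remains available for Lemma~\ref{tree+}(i); and since $H$ is spanning and triangularly connected, for any edge $e$ and any vertex $t$ there is a triangle-path of $H$ joining $t$ to $e$, which is exactly the hypothesis used throughout to move $\Z_3$-connectivity around by contracting $2$-cycles. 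Second, the only configuration that genuinely leaves the triangle-tree world --- a spanning triangularly-connected subgraph with no spanning triangle-tree --- is, by Proposition~\ref{thmwheelstar} together with Theorem~\ref{Thm:TriConZ_3_2-SUM}, exactly the fully-$2$-summed-odd-wheel situation, which already carries a $2$-sum/bull-growing description.

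Concretely, I would first re-prove the toolbox in the new generality: Lemma~\ref{tree+}(ii),(iii), Corollary~\ref{Cor:spantreesubInZ_3}, and the ``if and only if'' strengthening Lemma~\ref{lemma3reduceTRItree}, now with a spanning triangularly-connected subgraph $H$ in place of $\T$. Each goes through with the same argument; the loss of uniqueness of triangle-paths is harmless since every statement only needs the existence of one triangle-path (or one avoiding a prescribed vertex), and the contraction and lifting steps are unchanged. With this in hand the ``if'' directions are immediate: for (b), $K_3,K_4\notin\setZTHR$ and each $2$-sum step (Lemma~\ref{LEM: 2-sum}) and bull-growing step (Lemma~\ref{lemma3reduce}(ii)) preserves $\notin\setZTHR$; for (a), $K_4$ has no $3$-NZF and bull-growing preserves this by Lemma~\ref{lemma3reduce}(i).

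For the ``only if'' direction of (b): if $G$ already contains a spanning triangle-tree, Theorem~\ref{tri-tree_character} applies and its recursive form unwinds to the claimed construction, each recursive piece again containing a spanning triangle-tree and hence a spanning triangularly-connected subgraph. Otherwise pick a spanning triangularly-connected subgraph $H$ with the fewest edges. If $H\in\setZTHR$ then contracting $H$ gives $G\in\setZTHR$ by Lemma~\ref{completefamily}, a contradiction, so $H\notin\setZTHR$; and $H$ has no spanning triangle-tree, so by Theorem~\ref{Thm:TriConZ_3_2-SUM} and Proposition~\ref{thmwheelstar}, $H$ has a fully-$2$-summed odd wheel and is, up to $\setZTHR$-contractions, an iterated $2$-sum of triangles and odd wheels. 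I would then peel off through a shared (eccentrical) edge either a triangle, giving $G=K_3\bigoplus_2 G_1$, or an odd wheel $W_{2\ell+1}$, which is reduced to $K_4$ by $\ell-1$ bull-reductions at pairs of adjacent rim vertices chosen to avoid the shared edge (so that $W_{2\ell+1}\bigoplus_2 G_1 = {\mathcal B}\biguplus\cdots{\mathcal B}\biguplus(K_4\bigoplus_2 G_1)$, and $K_4\bigoplus_2 G_1 = {\mathcal B}\biguplus G_1$ via the $a=b$ case of bull-growing); then recurse on the strictly smaller graph. Part (a) runs on the same scheme with the $K_3$-$2$-sum case excluded: were it to arise, lifting the degree-$2$ apex of $K_3$ would give a graph containing a $\setZTHR$ sub-$2K_2$ that still has a spanning triangularly-connected subgraph, hence in $\setZTHR$ and so with a $3$-NZF, forcing a $3$-NZF of $G$. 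The main obstacle --- the only place real care is needed --- is the bookkeeping: after a $2$-sum split or bull-reduction one must check that the smaller graph still contains a spanning triangularly-connected subgraph and is still $\notin\setZTHR$, which in the triangle-tree proofs was arranged by always operating at a leaf of $\T$, and here must be extracted from the fully-$2$-summed-wheel structure, the minimum-edge choice of $H$, and a short look at the $|V(G)|\le 4$ base cases; in particular one must control an edge of $G$ lying outside $H$, showing it is either parallel (so $G\in\setZTHR$ and the case is vacuous), or lies inside one side of the decomposition (so one recurses), or forces a $K_4$ or a spanning triangle-tree (contradicting the case at hand). That is the step I expect to absorb most of the effort.
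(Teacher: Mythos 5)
Your proposal follows exactly the route the paper itself indicates: the paper offers no written proof of this theorem, only the remark that it follows ``with essentially the same proof'' by combining the spanning-triangle-tree results (Theorems \ref{mainthm3flow} and \ref{tri-tree_character}) with the structure of non-$\Z_3$-connected triangularly-connected graphs from Theorem \ref{Thm:TriConZ_3_2-SUM} and Proposition \ref{thmwheelstar}, and your plan---regeneralize the toolbox lemmas, handle the spanning-triangle-tree case by the earlier theorems, and reduce the fully-$2$-summed-odd-wheel case to $2$-sums and bull-growings (including the correct observation that $K_4\bigoplus_2 G_1={\mathcal B}\biguplus G_1$ via the $a=b$ case)---is precisely that blueprint. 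You correctly identify, and in fact treat more carefully than the paper does, the only delicate points: the loss of uniqueness of triangle-paths and the bookkeeping for edges of $G$ outside the chosen spanning triangularly-connected subgraph.
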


The methods developed in this paper may be helpful in studying the following more general problem.

\begin{problem}
Characterize the $3$-flow property of all graph $G$ such that for any $u,v\in V(G)$ there is $uv$-triangle-path in $G$.
\end{problem}

{\small

}
\end{document}